\renewcommand*{\backrefalt}[4]{%
    \ifcase #1 \footnotesize{(Not cited.)}%
    \or        \footnotesize{(Cited on page~#2.)}%
    \else      \footnotesize{(Cited on pages~#2.)}%
    \fi}
\long\def\comment#1{}
\newtheorem{theorem}{Theorem}[section]
\newtheorem{lemma}[theorem]{Lemma}
\newtheorem{proposition}[theorem]{Proposition}
\newtheorem{definition}{Definition}[section]
\newtheorem{remark}{Remark}[section]
\numberwithin{equation}{section}
\newcommand{\BB}{\mathbb{B}}
\newcommand{\Prob}{\textnormal{Prob}}
\newcommand{\x}{\mathbf x}
\newcommand{\y}{\mathbf y}
\newcommand{\g}{\mathbf g}
\newcommand{\h}{\mathbf h}
\newcommand{\z}{\mathbf z}
\newcommand{\w}{\mathbf w}
\newcommand{\su}{\mathbf u}
\newcommand{\sv}{\mathbf v}
\newcommand{\argmin}{\mathop{\rm argmin}}
\newcommand{\br}{\mathbb{R}}
\newcommand{\ba}{\begin{array}}
\newcommand{\ea}{\end{array}}
\newcommand{\FCal}{\mathcal{F}}
\newcommand{\EE}{{\mathbb{E}}}
\newcommand{\PP}{\mathbb{P}}
\newcommand{\zero}{\textbf{0}}
\begin{document}

\begin{center}

{\bf{\LARGE{Gradient-Free Methods for Deterministic and Stochastic \\ [.2cm] Nonsmooth Nonconvex Optimization}}}

\vspace*{.2in}
{\large{
\begin{tabular}{c}
Tianyi Lin$^\diamond$ \and Zeyu Zheng$^\ddagger$ \and Michael I. Jordan$^{\diamond, \dagger}$ \\
\end{tabular}
}}

\vspace*{.2in}

\begin{tabular}{c}
Department of Electrical Engineering and Computer Sciences$^\diamond$ \\
Department of Industrial Engineering and Operation Research$^\ddagger$ \\
Department of Statistics$^\dagger$ \\ 
University of California, Berkeley
\end{tabular}

\vspace*{.2in}

\today

\vspace*{.2in}

\begin{abstract}
Nonsmooth nonconvex optimization problems broadly emerge in machine learning and business decision making, whereas two core challenges impede the development of efficient solution methods with finite-time convergence guarantee: the lack of computationally tractable optimality criterion and the lack of computationally powerful oracles. The contributions of this paper are two-fold. First, we establish the relationship between the celebrated Goldstein subdifferential~\citep{Goldstein-1977-Optimization} and uniform smoothing, thereby providing the basis and intuition for the design of gradient-free methods that guarantee the finite-time convergence to a set of Goldstein stationary points. Second, we propose the gradient-free method (GFM) and stochastic GFM for solving a class of nonsmooth nonconvex optimization problems and prove that both of them can return a $(\delta,\epsilon)$-Goldstein stationary point of a Lipschitz function $f$ at an expected convergence rate at $O(d^{3/2}\delta^{-1}\epsilon^{-4})$ where $d$ is the problem dimension. Two-phase versions of GFM and SGFM are also proposed and proven to achieve improved large-deviation results. Finally, we demonstrate the effectiveness of 2-SGFM on training ReLU neural networks with the \textsc{Minst} dataset. 
\end{abstract}

\end{center}

\section{Introduction}\label{sec:intro}
Many of the recent real-world success stories of machine learning have involved nonconvex optimization formulations, with the design of models and algorithms often being heuristic and intuitive.  Thus a gap has arisen between theory and practice. Attempts have been made to fill this gap for different learning methodologies, including the training of multi-layer neural networks~\citep{Choromanska-2015-Loss}, orthogonal tensor decomposition~\citep{Ge-2015-Escaping}, M-estimators~\citep{Loh-2015-Regularized, Ma-2020-Implicit}, synchronization and MaxCut~\citep{Bandeira-2016-Low, Mei-2017-Solving}, smooth semidefinite programming~\citep{Boumal-2016-Nonconvex}, matrix sensing and completion~\citep{Bhojanapalli-2016-Global, Ge-2016-Matrix}, robust principal component analysis (RPCA)~\citep{Ge-2017-Spurious} and phase retrieval~\citep{Wang-2017-Solving, Sun-2018-Geometric, Ma-2020-Implicit}. For an overview of nonconvex optimization formulations and the relevant ML applications, we refer to a recent survey~\citep{Jain-2017-Non}.

It is intractable to compute an approximate global minimum~\citep{Nemirovsky-1983-Problem} in general or to verify whether a point is a local minimum or a high-order saddle point~\citep{Murty-1987-Some}. Fortunately, the notion of \emph{approximate stationary point} gives a reasonable optimality criterion when the objective function $f$is smooth; the goal here is to find a point $\x \in \br^d$ such that $\|\nabla f(\x)\| \leq \epsilon$.  Recent years have seen rapid algorithmic development through the lens of nonasymptotic convergence rates to $\epsilon$-stationary points~\citep{Nesterov-2013-Gradient, Ghadimi-2013-Stochastic, Ghadimi-2016-Accelerated, Carmon-2017-Convex, Carmon-2018-Accelerated, Jin-2021-Nonconvex}. Another line of work establishes algorithm-independent lower bounds~\citep{Carmon-2020-Lower, Carmon-2021-Lower, Arjevani-2020-Second, Arjevani-2022-Lower}. 

Relative to its smooth counterpart, the investigation of nonsmooth optimization is relatively scarce, particularly in the nonconvex setting, both in terms of efficient algorithms and finite-time convergence guarantees. Yet, over several decades, nonsmooth nonconvex optimization formulations have found applications in many fields. A typical example is the training multi-layer neural networks with ReLU neurons, for which the piecewise linear activation functions induce nonsmoothness. Another example arises in controlling financial risk for asset portfolios or optimizing customer satisfaction in service systems or supply chain systems. Here, the nonsmoothness arises from the payoffs of financial derivatives and supply chain costs, e.g., options payoffs~\citep{Duffie-2010-Dynamic} and supply chain overage/underage costs~\citep{Stadtler-2008-Supply}. These applications make significant demands with respect to computational feasibility, and the design of efficient algorithms for solving nonsmooth nonconvex optimization problems has moved to the fore~\citep{Majewski-2018-Analysis, Davis-2020-Stochastic, Daniilidis-2020-Pathological, Zhang-2020-Complexity, Bolte-2021-Conservative, Davis-2022-Gradient, Tian-2022-Finite}. 

The key challenges lie in two aspects: (i) the lack of a computationally tractable optimality criterion, and (ii) the lack of computationally powerful oracles. More specifically, in the classical setting where the function $f$ is Lipschitz, we can define $\epsilon$-stationary points based on the celebrated notion of Clarke stationarity~\citep{Clarke-1990-Optimization}. However, the value of such a criterion has been called into question by~\citet{Zhang-2020-Complexity}, who show that no finite-time algorithm guarantees $\epsilon$-stationarity when $\epsilon$ is less than a constant. Further, the computation of the gradient is impossible for many application problems and we only have access to a noisy function value at each point. This is a common issue in the context of simulation optimization~\citep{Nelson-2010-Optimization, Hong-2015-Discrete}; indeed, the objective function value is often achieved as the output of a black-box or complex simulator, for which the simulator does not have the infrastructure needed to effectively evaluate gradients; see also~\citet{Ghadimi-2013-Stochastic} and~\citet{Nesterov-2017-Random} for comments on the lack of gradient evaluation in practice. 

\paragraph{Contribution.} In this paper, we propose and analyze a class of deterministic and stochastic gradient-free methods for nonsmooth nonconvex optimization problems in which we only assume that the function $f$ is Lipschitz. Our contributions can be summarized as follows.
\begin{enumerate}
\item We establish a relationship between the Goldstein subdifferential and uniform smoothing via appeal to the hyperplane separation theorem. This result provides the basis for algorithmic design and finite-time convergence analysis of gradient-free methods to $(\delta,\epsilon)$-Goldstein stationary points. 
\item We propose and analyze a gradient-free method (GFM) and stochastic GFM for solving a class of nonsmooth nonconvex optimization problems. Both of these methods are guaranteed to return
a $(\delta, \epsilon)$-Goldstein stationary point of a Lipschitz function $f: \br^d \mapsto \br$ with an expected convergence rate of $O(d^{3/2}\delta^{-1}\epsilon^{-4})$ where $d \geq 1$ is the problem dimension. Further, we propose the two-phase versions of GFM and SGFM. As our goal is to return a $(\delta, \epsilon)$-Goldstein stationary point with user-specified high probability $1 - \Lambda$, we prove that the two-phase version of GFM and SGFM can improve the dependence from $(1/\Lambda)^4$ to $\log(1/\Lambda)$ in the large-deviation regime.
\end{enumerate}
\paragraph{Related Works.} Our work is related to a line of literature on gradient-based methods for nonsmooth and nonconvex optimization and gradient-free methods for smooth and nonconvex optimization. Due to space limitations, we defer our comments on the former topic to Appendix~\ref{app:nnopt}. In the context of gradient-free methods, the basic idea is to approximate a full gradient using either a one-point estimator~\citep{Flaxman-2005-Online} or a two-point estimator~\citep{Agarwal-2010-Optimal,Ghadimi-2013-Stochastic, Duchi-2015-Optimal, Shamir-2017-Optimal, Nesterov-2017-Random}, where the latter approach achieves a better finite-time convergence guarantee. Despite the meteoric rise of two-point-based gradient-free methods, most of the work is restricted to convex optimization~\citep{Duchi-2015-Optimal, Shamir-2017-Optimal, Wang-2018-Stochastic} and smooth and nonconvex optimization~\citep{Nesterov-2017-Random, Ghadimi-2013-Stochastic, Lian-2016-Comprehensive, Liu-2018-Zeroth, Chen-2019-Zo, Ji-2019-Improved, Huang-2022-Accelerated}. For nonsmooth and convex optimization, the best upper bound on the global rate of convergence is $O(d\epsilon^{-2})$~\citep{Shamir-2017-Optimal} and this matches the lower bound~\citep{Duchi-2015-Optimal}. For smooth and nonconvex optimization, the best global rate of convergence is $O(d\epsilon^{-2})$~\citep{Nesterov-2017-Random} and $O(d\epsilon^{-4})$ if we only have access to noisy function value oracles~\citep{Ghadimi-2013-Stochastic}. Additional regularity conditions, e.g., a finite-sum structure, allow us to leverage variance-reduction techniques~\citep{Liu-2018-Zeroth, Chen-2019-Zo, Ji-2019-Improved} and the best known result is $O(d^{3/4}\epsilon^{-3})$~\citep{Huang-2022-Accelerated}. However, none of gradient-free methods have been developed for nonsmooth nonconvex optimization and the only gradient-free method we are aware of for the nonsmooth is summarized in~\citet[Section~7]{Nesterov-2017-Random}.

\section{Preliminaries and Technical Background}\label{sec:prelim}
We provide the formal definitions for the class of Lipschitz functions considered in this paper, and the definitions for generalized gradients and the Goldstein subdifferential that lead to optimality conditions in nonsmooth nonconvex optimization. 

\subsection{Function classes}
Imposing regularity on functions to be optimized is necessary for obtaining optimization algorithms with finite-time convergence guarantees~\citep{Nesterov-2018-Lectures}. In the context of nonsmooth optimization, there are two regularity conditions: Lipschitz properties of function values and bounds on function values.  

We first list several equivalent definitions of Lipschitz continuity.  A function $f: \br^d \mapsto \br$ is said to be $L$-\emph{Lipschitz} if for every $\x \in \br^d$ and the direction $\sv \in \br^d$ with $\|\sv\| \leq 1$, the directional projection $f_{\x, \sv}(t) := f(\x + t\sv)$ defined for $t \in \br$ satisfies 
\begin{equation*}
|f_{\x, \sv}(t) - f_{\x, \sv}(t')| \leq L|t - t'|,  \quad \textnormal{for all } t, t' \in \br. 
\end{equation*}
Equivalently,  $f$ is $L$-Lipschitz if for every $\x, \x' \in \br^d$, we have
\begin{equation*}
|f(\x) - f(\x')| \leq L\|\x - \x'\|. 
\end{equation*}
Further, the function value bound $f(\x^0) - \inf_{\x \in \br^d} f(\x)$ appears in complexity guarantees for smooth and nonconvex optimization problems~\citep{Nesterov-2018-Lectures} and is often assumed to be bounded by a positive constant $\Delta > 0$.  Note that $\x^0$ is a prespecified point (i.e., an initial point for an algorithm) and we simply fix it for the remainder of this paper. We define the function class considered in this paper. 
\begin{definition}\label{def:function_class}
Suppose that $\Delta > 0$ and $L > 0$ are both independent of the problem dimension $d \geq 1$. Then, we denote $\FCal_d(\Delta, L)$ as the set of $L$-Lipschitz functions $f: \br^d \mapsto \br$ with the bounded function value $f(\x^0) - \inf_{\x \in \br^d} f(\x) \leq \Delta$. 
\end{definition}
The function class $\FCal_d(\Delta, L)$ includes Lipschitz functions on $\br^d$ and is thus different from the nonconvex function class considered in the literature~\citep{Ghadimi-2013-Stochastic, Nesterov-2017-Random}. First, we do not impose a smoothness condition on the function $f \in \FCal_d(\Delta, L)$, in contrast to the nonconvex functions studied in~\citet{Ghadimi-2013-Stochastic} which are assumed to have Lipschitz gradients. Second,~\citet[Section~7]{Nesterov-2017-Random} presented a complexity bound for a randomized optimization method for minimizing a nonsmooth nonconvex function. However, they did not clarify why the norm of the gradient of the approximate function $f_{\bar{\mu}}$ of the order $\delta$ (we use their notation) serves as a reasonable optimality criterion in nonsmooth nonconvex optimization. They also assume an exact function value oracle, ruling out many interesting application problems in simulation optimization and machine learning. 

In contrast,  our goal is to propose fast gradient-free methods for nonsmooth nonconvex optimization in the absence of an exact function value oracle. In general, the complexity bound of gradient-free methods will depend on the problem dimension $d \geq 1$ even when we assume that the function to be optimized is convex and smooth~\citep{Duchi-2015-Optimal, Shamir-2017-Optimal}. As such, we should consider a function class with a given dimension $d \geq 1$. In particular, we consider a optimality criterion based on the celebrated Goldstein subdifferential~\citep{Goldstein-1977-Optimization} and prove that the number of function value oracles required by our deterministic and stochastic gradient-free methods to find a $(\delta, \epsilon)$-Goldstein stationary point of $f \in \FCal_d(\Delta, L)$ is $O(\textnormal{poly}(d, L, \Delta, 1/\epsilon, 1/\delta))$ when $\delta, \epsilon \in (0, 1)$ are constants (see the definition of Goldstein stationarity in the next subsection). 

It is worth mentioning that $\FCal_d(\Delta, L)$ contains a rather broad class of functions used in real-world application problems. Typical examples with additional regularity properties include Hadamard semi-differentiable functions~\citep{Shapiro-1990-Concepts, Delfour-2019-Introduction, Zhang-2020-Complexity}, Whitney-stratifiable functions~\citep{Bolte-2007-Clarke, Davis-2020-Stochastic}, $o$-minimally definable functions~\citep{Coste-2000-Introduction} and a class of semi-algebraic functions~\citep{Attouch-2013-Convergence, Davis-2020-Stochastic}. Thus, our gradient-free methods can be applied for solving these problems with finite-time convergence guarantees. 

\subsection{Generalized gradients and Goldstein subdifferential}
We start with the definition of generalized gradients~\citep{Clarke-1990-Optimization} for nondifferentiable functions. This is perhaps the most standard extension of gradients to nonsmooth and nonconvex functions. 
\begin{definition}
Given a point $\x \in \br^d$ and a direction $\sv \in \br^d$, the generalized directional derivative of a nondifferentiable function $f$ is given by $D f(\x; \sv) := \limsup_{\y \rightarrow \x, t \downarrow 0} \ \frac{f(\y + t\sv) - f(\y)}{t}$.  Then, the generalized gradient of $f$ is defined as a set $\partial f(\x) := \{\g \in \br^d: \g^\top \sv \leq D f(\x; \sv), \forall \sv \in \br^d\}$. 
\end{definition} 
Rademacher's theorem guarantees that any Lipschitz function is almost everywhere differentiable. This implies that the generalized gradients of Lipschitz functions have additional properties and we can define them in a relatively simple way.  The following proposition summarizes these results; we refer to~\citet{Clarke-1990-Optimization} for the proof details. 
\begin{proposition}
Suppose that $f$ is $L$-Lipschitz for some $L > 0$, we have that $\partial f(\x)$ is a nonempty, convex and compact set and $\|\g\| \leq L$ for all $\g \in \partial f(\x)$.  Further, $\partial f(\cdot)$ is an upper-semicontinuous set-valued map. Moreover, a generalization of mean-value theorem holds: for any $\x_1, \x_2 \in \br^d$, there exist $\lambda \in (0, 1)$ and $\g \in \partial f(\lambda\x_1 + (1-\lambda)\x_2)$ such that $f(\x_1) - f(\x_2) = \g^\top(\x_1 - \x_2)$. Finally, there is a simple way to represent the generalized gradient $\partial f(\x)$: 
\begin{equation*}
\partial f(\x) := \textnormal{conv}\left\{\g \in \br^d: \g = \lim_{\x_k \rightarrow \x} \nabla f(\x_k)\right\}, 
\end{equation*}
which is the convex hull of all limit points of $\nabla f(\x_k)$ over all sequences $\x_1, \x_2, \ldots$ of differentiable points of $f(\cdot)$ which converge to $\x$. 
\end{proposition}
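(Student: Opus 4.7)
The plan is to take Rademacher's theorem as the foundation and use it both to certify that $\partial f(\x)$ is nonempty and to derive the convex-hull representation at the end, from which most structural properties follow cheaply. First I would verify directly from the definition that $Df(\x;\sv)$ is sublinear in $\sv$: positive homogeneity is immediate, while subadditivity follows by splitting the difference quotient $\frac{f(\y+t(\sv+\w))-f(\y)}{t}=\frac{f(\y+t\sv+t\w)-f(\y+t\sv)}{t}+\frac{f(\y+t\sv)-f(\y)}{t}$ and taking the $\limsup$ (using that the first term along $\y'=\y+t\sv\to\x$ is also a legitimate limit). I would also record the Lipschitz bound $|Df(\x;\sv)|\le L\|\sv\|$, which is immediate from the $L$-Lipschitz property of $f$.

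From sublinearity and the bound $Df(\x;\sv)\le L\|\sv\|$, a standard Hahn--Banach argument produces at least one $\g\in\br^d$ with $\g^\top\sv\le Df(\x;\sv)$ for all $\sv$, and any such $\g$ satisfies $\|\g\|\le L$ by choosing $\sv=\g/\|\g\|$. Thus $\partial f(\x)$ is nonempty, and it is automatically convex and closed as an intersection of closed half-spaces $\{\g:\g^\top\sv\le Df(\x;\sv)\}$ indexed by $\sv$; boundedness (hence compactness) follows from $\|\g\|\le L$. Upper semicontinuity of the set-valued map $\x\mapsto\partial f(\x)$ I would obtain by showing that $(\x,\sv)\mapsto Df(\x;\sv)$ is upper semicontinuous in $\x$ (which is essentially the content of the $\limsup$ in the definition): if $\x_k\to\x$ and $\g_k\in\partial f(\x_k)$ with $\g_k\to\g$, then $\g^\top\sv=\lim \g_k^\top\sv\le\limsup Df(\x_k;\sv)\le Df(\x;\sv)$, so $\g\in\partial f(\x)$.

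For the mean-value theorem, I would reduce to the one-dimensional function $h(\lambda):=f(\lambda\x_1+(1-\lambda)\x_2)+\lambda(f(\x_2)-f(\x_1))$, which is Lipschitz on $[0,1]$ and satisfies $h(0)=h(1)=f(\x_2)$; a Lipschitz function on a compact interval attains its extrema, and at an interior extremum $\lambda^\star\in(0,1)$ one has $0\in\partial h(\lambda^\star)$. The main obstacle is the chain rule: I need the inclusion $\partial h(\lambda)\subseteq\{\g^\top(\x_1-\x_2)+(f(\x_2)-f(\x_1)):\g\in\partial f(\lambda\x_1+(1-\lambda)\x_2)\}$, which I would prove by comparing difference quotients in the two definitions and using the sublinear envelope structure above. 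Combining these gives the desired $\g\in\partial f(\lambda^\star\x_1+(1-\lambda^\star)\x_2)$ with $f(\x_1)-f(\x_2)=\g^\top(\x_1-\x_2)$.

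Finally, for the convex-hull representation, let $S(\x):=\textnormal{conv}\{\lim_{\x_k\to\x}\nabla f(\x_k):f\text{ differentiable at }\x_k\}$. The inclusion $S(\x)\subseteq\partial f(\x)$ follows from the upper semicontinuity established above, since at any differentiability point $\nabla f(\x_k)\in\partial f(\x_k)$. For the reverse inclusion $\partial f(\x)\subseteq S(\x)$, I expect this to be the genuinely delicate step: I would argue by contradiction and separation, supposing some $\g_0\in\partial f(\x)\setminus S(\x)$ can be strictly separated from the closed convex set $S(\x)$ by a direction $\sv$, and then deriving a contradiction to $\g_0^\top\sv\le Df(\x;\sv)$ by computing $Df(\x;\sv)$ via Rademacher (integrating $\nabla f$ along segments parallel to $\sv$ through nearby differentiability points, which exist for a.e.\ parallel line by Fubini). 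This Rademacher-plus-Fubini computation is where I anticipate the heaviest work, and it is essentially the content of Theorem~2.5.1 in Clarke's book.
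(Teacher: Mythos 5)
Your outline is correct and follows the standard development in Clarke's monograph --- sublinearity and local boundedness of $Df(\cdot;\cdot)$, the support-function/Hahn--Banach argument for nonemptiness, convexity, compactness and the bound $\|\g\|\le L$, closedness of the graph for upper semicontinuity, Lebourg's mean value theorem via the one-dimensional reduction and chain rule, and the gradient formula via separation together with the Rademacher--Fubini line-integration argument (Theorem~2.5.1 there). This is precisely the route the paper itself relies on: it gives no proof of this proposition and simply defers to Clarke (1990) for the details, so there is nothing substantive to contrast.
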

Given this definition of generalized gradients, a \emph{Clarke stationary point} of $f$ is a point $\x$ satisfying $\zero \in \partial f(\x)$. Then, it is natural to ask if an optimization algorithm can reach an $\epsilon$-stationary point with a finite-time convergence guarantee.  Here a point $\x \in \br^d$ is an $\epsilon$-Clarke stationary point if 
\begin{equation*}
\min \left\{\|\g\|: \g \in \partial f(\x) \right\} \leq \epsilon. 
\end{equation*}
This question has been addressed by~\cite[Theorem~1]{Zhang-2020-Complexity}, who showed that finding an $\epsilon$-Clarke stationary points in nonsmooth nonconvex optimization can not be achieved by any finite-time algorithm given a fixed tolerance $\epsilon \in [0, 1)$. One possible response is to consider a relaxation called a \textit{near} $\epsilon$-Clarke stationary point. Consider a point which is $\delta$-close to an $\epsilon$-stationary point for some $\delta > 0$. A point $\x \in \br^d$ is near $\epsilon$-stationary if the following statement holds true:
\begin{equation*}
\min \left\{\|\g\|: \g \in \cup_{\y \in \BB_\delta(\x)}  \partial f(\y) \right\} \leq \epsilon. 
\end{equation*}
Unfortunately, however,~\cite[Theorem~1]{Kornowski-2021-Oracle} demonstrated that it is impossible to obtain worst-case guarantees for finding a near $\epsilon$-Clarke stationary point of $f \in \FCal_d(\Delta, L)$ when $\epsilon, \delta > 0$ are smaller than some certain constants unless the number of oracle calls has an exponential dependence on the problem dimension $d \geq 1$.  These negative results suggest a need for rethinking the definition of targeted stationary points. We propose to consider the refined notion of Goldstein subdifferential. 
\begin{definition}\label{def:Goldstein}
Given a point $\x \in \br^d$ and $\delta > 0$, the $\delta$-Goldstein subdifferential of a Lipschitz function $f$ at $\x$ is given by $\partial_\delta f(\x) := \textnormal{conv}(\cup_{\y \in \BB_\delta(\x)}  \partial f(\y))$. 
\end{definition}
The Goldstein subdifferential of $f$ at $\x$ is the convex hull of the union of all generalized gradients at points in a $\delta$-ball around $\x$. Accordingly, we can define the $(\delta, \epsilon)$-Goldstein stationary points; that is, a point $\x \in \br^d$ is a $(\delta, \epsilon)$-Goldstein stationary point if the following statement holds:
\begin{equation*}
\min\{\|\g\|: \g \in \partial_\delta f(\x) \} \leq \epsilon. 
\end{equation*}
It is worth mentioning that $(\delta, \epsilon)$-Goldstein stationarity is a weaker notion than (near) $\epsilon$-Clarke stationarity since any (near) $\epsilon$-stationary point is a $(\delta, \epsilon)$-Goldstein stationary point but not vice versa. However, the converse holds true under a smoothness condition~\citep[Proposition~6]{Zhang-2020-Complexity} and $\lim_{\delta \downarrow 0} \partial_\delta f(\x) = \partial f(\x)$ holds as shown in~\citet[Lemma~7]{Zhang-2020-Complexity}. The latter result also enables an intuitive framework for transforming nonasymptotic analysis of convergence to $(\delta, \epsilon)$-Goldstein stationary points to classical asymptotic results for finding $\epsilon$-Clarke stationary points.  Thus, we conclude that finding a $(\delta, \epsilon)$-Goldstein stationary point is a reasonable optimality condition for general nonsmooth nonconvex optimization. 
\begin{remark}
Finding a $(\delta, \epsilon)$-Goldstein stationary point in nonsmooth nonconvex optimization has been formally shown to be computationally tractable in an oracle model~\citep{Zhang-2020-Complexity, Davis-2022-Gradient, Tian-2022-Finite}.~\citet{Goldstein-1977-Optimization} discovered that one can decrease the function value of a Lipschitz $f$ by using the minimal-norm element of $\partial_\delta f(\x)$ and this leads to a deterministic normalized subgradient method which finds a $(\delta, \epsilon)$-Goldstein stationary point within $O(\frac{\Delta}{\delta\epsilon})$ iterations.  However, Goldstein's algorithm is only conceptual since it is computationally intractable to return an exact minimal-norm element of $\partial_\delta f(\x)$. Recently, the randomized variants of Goldstein's algorithm have been proposed with a convergence guarantee of $O(\frac{\Delta L^2}{\delta\epsilon^3})$~\citep{Zhang-2020-Complexity, Davis-2022-Gradient, Tian-2022-Finite}. However, it remains unknown if gradient-free methods find a $(\delta, \epsilon)$-Goldstein stationary point of a Lipschitz function $f$ within $O(\textnormal{poly}(d, L, \Delta, 1/\epsilon, 1/\delta))$ iterations in the absence of an exact function value oracle. Note that the dependence on the problem dimension $d \geq 1$ is necessary for gradient-free methods.
\end{remark}

\subsection{Randomized smoothing}
The randomized smoothing approaches are simple and work equally well for convex and nonconvex functions. Formally, given the $L$-Lipschitz function $f$ (possibly nonsmooth nonconvex) and a distribution $\PP$, we define $f_\delta(\x) = \EE_{\su \sim \PP}[f(\x + \delta \su)]$. In particular, letting $\PP$ be a standard Gaussian distribution, the function $f_\delta$ is a $\delta L\sqrt{d}$-approximation of $f(\cdot)$ and the gradient $\nabla f_\delta$ is $\frac{L\sqrt{d}}{\delta}$-Lipschitz where $d \geq 1$ is the problem dimension; see~\citet[Theorem~1 and Lemma~2]{Nesterov-2017-Random}. Letting $\PP$ be an uniform distribution on an unit ball in $\ell_2$-norm, the resulting function $f_\delta$ is a $\delta L$-approximation of $f(\cdot)$ and $\nabla f_\delta$ is also $\frac{cL\sqrt{d}}{\delta}$-Lipschitz where $d \geq 1$ is the problem dimension; see~\citet[Lemma~8]{Yousefian-2012-Stochastic} and~\citet[Lemma~E.2]{Duchi-2012-Randomized}, rephrased as follows. 
\begin{proposition}\label{Prop:uniform-smoothing}
Let $f_\delta(\x) = \EE_{\su \sim \PP}[f(\x + \delta \su)]$ where $\PP$ is an uniform distribution on an unit ball in $\ell_2$-norm. Assuming that $f$ is $L$-Lipschitz, we have (i) $|f_\delta(\x) - f(\x)| \leq \delta L$, and (ii) $f_\delta$ is differentiable and $L$-Lipschitz with the $\frac{cL\sqrt{d}}{\delta}$-Lipschitz gradient where $c > 0$ is a constant. In addition, there exists a function $f$ for which each of the above bounds are tight simultaneously.
\end{proposition}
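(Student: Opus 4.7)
The plan is to treat the three claims—approximation quality, Lipschitz continuity of $f_\delta$, and Lipschitz continuity of $\nabla f_\delta$—separately, using only the $L$-Lipschitz property of $f$. Part (i) is almost immediate: since $\|\delta \su\|\le\delta$ whenever $\su$ lies in the unit ball, the $L$-Lipschitz property yields $|f(\x+\delta\su)-f(\x)|\le L\delta$ pointwise in $\su$, and taking expectation preserves this bound. The $L$-Lipschitz property of $f_\delta$ follows by the same bound-then-expectation argument applied to $|f(\x+\delta\su)-f(\y+\delta\su)|\le L\|\x-\y\|$, where the translation invariance of the integrator $\PP$ does not matter but the pointwise Lipschitz bound does.

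For differentiability and an explicit gradient formula, the strategy is to rewrite the expectation as a volume integral via the change of variables $\y=\x+\delta\su$, which gives $f_\delta(\x) = (\delta^d V_d)^{-1}\int_{\BB_\delta(\x)} f(\y)\,d\y$, where $V_d$ is the volume of the unit ball in $\br^d$. For smooth $f$, Stokes' theorem applied after translating the integration domain yields the surface-integral representation
\begin{equation*}
\nabla f_\delta(\x) \;=\; \frac{1}{\delta^d V_d}\int_{\partial \BB_\delta(\x)} f(\y)\,\nu(\y)\,dS(\y),
\end{equation*}
with $\nu(\y)=(\y-\x)/\|\y-\x\|$ the outward unit normal. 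This representation extends to all Lipschitz $f$ by mollification (approximate $f$ by $f\ast\varphi_\varepsilon$, apply the smooth case, and pass to the limit using uniform convergence and the bounded surface measure), and it simultaneously establishes differentiability of $f_\delta$.

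The main obstacle is the sharper $c L\sqrt{d}/\delta$ bound on the Lipschitz constant of $\nabla f_\delta$, because a naive estimate of the surface integral would produce a factor of $d/\delta$ (from $\mathrm{Area}(\partial\BB_\delta) = d V_d \delta^{d-1}$ divided by $\delta^d V_d$). To save a $\sqrt d$, the plan is to fix an arbitrary unit vector $\w$, write
\begin{equation*}
\w^\top\bigl(\nabla f_\delta(\x)-\nabla f_\delta(\y)\bigr) \;=\; \frac{1}{\delta^d V_d}\int_{\partial \BB_\delta(\zero)} \bigl(f(\x+\z)-f(\y+\z)\bigr)\,\w^\top\nu(\z)\,dS(\z),
\end{equation*}
and apply Cauchy--Schwarz on the sphere. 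The first factor is bounded by $L\|\x-\y\|\sqrt{\mathrm{Area}(\partial\BB_\delta)}$ from Lipschitz continuity; the second factor is $\bigl(\int_{\partial\BB_\delta(\zero)}(\w^\top\nu(\z))^2\,dS(\z)\bigr)^{1/2}$, which by rotational symmetry equals $\sqrt{\mathrm{Area}(\partial\BB_\delta)/d}$. Multiplying these and taking the supremum over unit $\w$ produces precisely the $\sqrt{d}L/\delta$ scaling. The isotropy identity $\int(\w^\top\nu)^2\,dS=\mathrm{Area}/d$ is where the dimension-square-root improvement enters, and it is the only nontrivial ingredient in the proof.

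For the tightness clause, the plan is to exhibit a single $L$-Lipschitz function—such as $f(\x)=L\|\x\|$ or a one-dimensional kink like $f(\x)=L|\w^\top \x|$ for a fixed unit $\w$—and verify by direct computation that $|f_\delta(\zero)-f(\zero)|$ matches $L\delta$ up to a universal constant and that the second-order behavior of $f_\delta$ at $\zero$ along appropriate directions saturates the $\sqrt{d}/\delta$ Hessian scaling, so both bounds are tight up to an absolute constant simultaneously.
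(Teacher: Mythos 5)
Your treatment of part (i) and of the $L$-Lipschitzness of $f_\delta$ coincides with the paper's. For the gradient bound you take a genuinely different route. The paper invokes Rademacher's theorem to write $\nabla f_\delta(\x)$ as a normalized volume integral of $\nabla f$ over $\BB_\delta(\x)$, and then bounds $\|\nabla f_\delta(\x)-\nabla f_\delta(\x')\|$ by the volume of the symmetric difference $\BB_\delta(\x)\,\triangle\,\BB_\delta(\x')$, estimated via spherical caps; the $\sqrt d$ emerges from the ratio $c_{d-1}/c_d\sim\sqrt d$ of ball volumes. You instead pass to the boundary: the divergence theorem gives $\nabla f_\delta(\x)=(\delta^d V_d)^{-1}\int_{\partial\BB_\delta(\x)}f\,\nu\,dS$ (extended to Lipschitz $f$ by mollification, which also yields differentiability without citing Bertsekas), and then Cauchy--Schwarz together with the isotropy identity $\int(\w^\top\nu)^2\,dS=\mathrm{Area}/d$ converts the naive $d/\delta$ into $\sqrt d L\|\x-\x'\|/\delta$. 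I checked the arithmetic: the product of your two factors is $L\|\x-\x'\|\cdot\mathrm{Area}/\sqrt d$, and dividing by $\delta^dV_d$ gives exactly $\sqrt d L\|\x-\x'\|/\delta$. Both arguments ultimately rest on the same $\sqrt d$ phenomenon (the marginal density of one coordinate of the uniform ball at the origin scales like $\sqrt d$), but yours avoids the case split on $\|\x-\x'\|\gtrless 2\delta$ and the explicit cap-volume convexity estimate, at the price of justifying the surface representation for nonsmooth $f$. Either is a valid proof of the quantitative claims.

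The one genuine gap is in the tightness clause. The proposition asks for a \emph{single} function making both bounds tight \emph{simultaneously}, and neither of your two candidates does this alone: for $f(\x)=L\|\x\|$ one has $|f_\delta(\zero)-f(\zero)|=\tfrac{d}{d+1}\delta L$, so (i) is tight, but the gradient of $f_\delta$ near the origin varies only at rate $O(L/\delta)$, not $\Omega(L\sqrt d/\delta)$; conversely, for $f(\x)=L|\w^\top\x|$ the gradient bound is saturated (the marginal density of $\w^\top\su$ at $0$ is $\Theta(\sqrt d)$), but $|f_\delta(\zero)-f(\zero)|=\delta L\,\EE|\w^\top\su|=\Theta(\delta L/\sqrt d)$, so (i) is off by $\sqrt d$. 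This is precisely why the paper takes the convex combination $f=\tfrac12(f_1+f_2)$ with $f_1(\x)=L\|\x\|$ and $f_2(\x)=L|\langle\x,\w/\|\w\|\rangle-\tfrac12|$: each summand certifies one bound and neither spoils the other. Replace your ``or'' by this averaging construction and the tightness claim goes through.
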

The randomized smoothing approaches form the basis for developing gradient-free methods~\citep{Flaxman-2005-Online, Agarwal-2010-Optimal, Agarwal-2013-Stochastic, Ghadimi-2013-Stochastic, Nesterov-2017-Random}. Given an access to function values of $f$, we can compute an unbiased estimate of the gradient of $f_\delta$ and plug them into stochastic gradient-based methods. Note that the Lipschitz constant of $f_\delta$ depends on the problem dimension $d \geq 1$ with at least a factor of $\sqrt{d}$ for many randomized smoothing approaches~\citep[Theorem~2]{Kornowski-2021-Oracle}. This is consistent with the lower bounds for all gradient-free methods in convex and strongly convex optimization~\citep{Duchi-2015-Optimal, Shamir-2017-Optimal}.  

\section{Main Results}\label{sec:results}
We establish a relationship between the Goldstein subdifferential and the uniform smoothing approach. We propose a gradient-free method (GFM), its stochastic variant (SGFM), and a two-phase version of GFM and SGFM. We analyze these algorithms using the Goldstein subdifferential; we provide the global rate and large-deviation estimates in terms of $(\delta, \epsilon)$-Goldstein stationarity.  

\subsection{Linking Goldstein subdifferential to uniform smoothing}
Recall that $\partial_\delta f$ and $f_\delta$ are defined by $\partial_\delta f(\x) := \textnormal{conv}(\cup_{\y \in \BB_\delta(\x)}  \partial f(\y))$ and $f_\delta(\x) = \EE_{\su \sim \PP}[f(\x + \delta \su)]$. It is clear that $f$ is almost everywhere differentiable since $f$ is $L$-Lipschitz. This implies that $\nabla f_\delta(\x) = \EE_{\su \sim \PP}[\nabla f(\x + \delta \su)]$ and demonstrates that $\nabla f_\delta(\x)$ can be viewed intuitively as a convex combination of $\nabla f(\z)$ over an infinite number of points $\z \in \BB_\delta(\x)$. As such, it is reasonable to conjecture that $\nabla f_\delta(\x) \in \partial_\delta f(\x)$ for any $\x \in \br^d$. However, the above argument is not a rigorous proof; indeed, we need to justify why $\nabla f_\delta(\x) = \EE_{\su \sim \PP}[\nabla f(\x + \delta \su)]$ if $f$ is almost everywhere differentiable and generalize the idea of a convex combination to include infinite sums. To resolve these issues, we exploit a toolbox due to~\citet{Rockafellar-2009-Variational}. 

In the following theorem, we summarize our result and refer to Appendix~\ref{app:Goldstein-smoothing} for the proof details. 
\begin{theorem}\label{Thm:Goldstein-smoothing}
Suppose that $f$ is $L$-Lipschitz and let $f_\delta(\x) = \EE_{\su \sim \PP}[f(\x + \delta \su)]$, where $\PP$ is an uniform distribution on a unit ball in $\ell_2$-norm and let $\partial_\delta f$ be a $\delta$-Goldstein subdifferential of $f$ (cf. Definition~\ref{def:Goldstein}). Then, we have $\nabla f_\delta(\x) \in \partial_\delta f(\x)$ for any $\x \in \br^d$. 
\end{theorem}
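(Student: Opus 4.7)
The plan is to decompose the proof into two conceptual pieces: an analytic step that differentiates the convolution $f_\delta(\x) = \EE_{\su \sim \PP}[f(\x + \delta \su)]$ under the expectation to obtain $\nabla f_\delta(\x) = \EE_{\su \sim \PP}[\nabla f(\x + \delta \su)]$, and a set-valued step that interprets this expectation as a point of $\partial_\delta f(\x)$. Both pieces are natural consequences of Rademacher's theorem, upper semicontinuity of $\partial f$, and the absolute continuity of $\PP$ with respect to Lebesgue measure.

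For the first piece, I would fix a coordinate direction $\e_i$ and write the difference quotient $\frac{f_\delta(\x+t\e_i)-f_\delta(\x)}{t} = \EE_{\su \sim \PP}\!\left[\tfrac{f(\x+\delta\su+t\e_i)-f(\x+\delta\su)}{t}\right]$. By Rademacher's theorem, $f$ is differentiable at Lebesgue-almost every point, and since $\PP$ is absolutely continuous, the set of $\su$ for which $f$ fails to be differentiable at $\x+\delta\su$ has $\PP$-measure zero. Thus the inner difference quotient converges $\PP$-almost surely to $\partial_i f(\x+\delta\su)$. Lipschitz continuity furnishes the uniform bound $L$, and dominated convergence yields $\partial_i f_\delta(\x) = \EE_{\su \sim \PP}[\partial_i f(\x+\delta\su)]$ for each $i$; stacking coordinates gives the desired interchange. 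Measurability of $\su \mapsto \nabla f(\x+\delta\su)$ is handled by the measurable-selection and parametric-integration results in \citet{Rockafellar-2009-Variational}, which is precisely the toolbox the paper cites.

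For the second piece, I would first verify that $\partial_\delta f(\x)$ is a compact convex subset of $\br^d$. Convexity is immediate from the definition. For compactness, upper semicontinuity of $\partial f(\cdot)$ on the compact set $\BB_\delta(\x)$ makes the union $\bigcup_{\y \in \BB_\delta(\x)} \partial f(\y)$ closed, the Lipschitz bound $\|\g\| \leq L$ makes it bounded, and Carath\'eodory's theorem makes its convex hull compact. For $\PP$-almost every $\su$, we have $\nabla f(\x+\delta\su) \in \partial f(\x+\delta\su) \subseteq \bigcup_{\y \in \BB_\delta(\x)} \partial f(\y) \subseteq \partial_\delta f(\x)$. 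Since a Bochner integral whose integrand lies almost surely in a closed convex set belongs to that set, combining with the first piece gives $\nabla f_\delta(\x) \in \partial_\delta f(\x)$.

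The main obstacle is the rigorous justification of the first piece. Although differentiating under the expectation is routine for smooth integrands, here $\nabla f$ exists only almost everywhere, so one must carefully transfer the Lebesgue-null exceptional set to a $\PP$-null set via the change of variables $\su \mapsto \x+\delta\su$ and argue that the relevant difference quotients are uniformly integrable. The cleanest path is to appeal to the Rockafellar--Wets machinery for integral functionals of Lipschitz integrands, which simultaneously guarantees measurability of the parametrized selection and validates the interchange, sidestepping the need to construct measurable gradients by hand.
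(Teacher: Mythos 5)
Your proposal is correct and uses the same two-step decomposition as the paper: first establish the interchange $\nabla f_\delta(\x) = \EE_{\su \sim \PP}[\nabla f(\x + \delta \su)]$ via Rademacher's theorem, absolute continuity of $\PP$ with respect to Lebesgue measure, and dominated convergence (the paper additionally invokes Bertsekas' result that $f_\delta$ is everywhere differentiable; you should cite that, or Proposition~\ref{Prop:uniform-smoothing}, to upgrade your coordinate-wise partial derivatives to a genuine gradient, since existence of partials alone does not give differentiability); then conclude membership in $\partial_\delta f(\x)$. The only genuine divergence is in the second step: the paper argues by contradiction, strictly separating a putative $\nabla f_\delta(\x_0) \notin \partial_\delta f(\x_0)$ from $\partial_\delta f(\x_0)$ by a hyperplane and contradicting the integral representation of $\nabla f_\delta(\x_0)$, whereas you invoke the general fact that a Bochner integral of an integrand lying almost surely in a closed convex set belongs to that set, after explicitly verifying that $\partial_\delta f(\x)$ is compact and convex via upper semicontinuity of $\partial f$, the bound $\|\g\| \leq L$, and Carath\'eodory's theorem. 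These are two packagings of the same idea (separation is precisely how the abstract fact is proved), but your explicit closedness check is a worthwhile addition: the paper's strict separation step tacitly requires $\partial_\delta f(\x_0)$ to be closed, which it does not verify.
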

Theorem~\ref{Thm:Goldstein-smoothing} resolves an important question and forms the basis for analyzing our gradient-free methods. Notably, our analysis can be extended to justify other randomized smoothing approaches in nonsmooth nonconvex optimization. For example,~\citet{Nesterov-2017-Random} used  Gaussian smoothing and estimated the number of iterations required by their methods to output $\hat{\x} \in \br^d$ satisfying $\|\nabla f_\delta(\hat{\x})\| \leq \epsilon$. By modifying the proof of Theorem~\ref{Thm:Goldstein-smoothing} and~\citet[Lemma~7]{Zhang-2020-Complexity}, we can prove that $\nabla f_\delta$ belongs to Goldstein subdifferential with Gaussian weights and this subdifferential converges to the Clarke subdifferential as $\delta \rightarrow 0$. Compared to uniform smoothing and the original Goldstein subdifferential, the proof for Gaussian smoothing is quite long and technical~\citep[Page 554]{Nesterov-2017-Random}, and adding Gaussian weights seems unnatural in general. 
\begin{algorithm}[!t]
\caption{Gradient-Free Method (GFM)}\label{alg:GFM}
\begin{algorithmic}[1]
\STATE \textbf{Input:} initial point $\x^0 \in \br^d$, stepsize $\eta > 0$, problem dimension $d \geq 1$, smoothing parameter $\delta$ and iteration number $T \geq 1$. 
\FOR{$t = 0, 1, 2, \ldots, T-1$}
\STATE Sample $\w^t \in \br^d$ uniformly from a unit sphere in $\br^d$.  
\STATE Compute $\g^t = \frac{d}{2\delta}(f(\x^t + \delta\w^t) - f(\x^t - \delta\w^t))\w^t$. 
\STATE Compute $\x^{t+1} = \x^t - \eta\g^t$. 
\ENDFOR
\STATE \textbf{Output:} $\x^R$ where $R \in \{0, 1, 2, \ldots, T-1\}$ is uniformly sampled. 
\end{algorithmic}
\end{algorithm}
\begin{algorithm}[!t]
\caption{Two-Phase Gradient-Free Method (2-GFM)}\label{alg:2-GFM}
\begin{algorithmic}[1]
\STATE \textbf{Input:} initial point $\x^0 \in \br^d$, stepsize $\eta > 0$, problem dimension $d \geq 1$, smoothing parameter $\delta$, iteration number $T \geq 1$, number of rounds $S \geq 1$ and sample size $B$. 
\FOR{$s = 0, 1, 2, \ldots, S-1$}
\STATE Call Algorithm~\ref{alg:GFM} with $\x^0$, $\eta$, $d$, $\delta$ and $T$ and let $\bar{\x}_s$ be an output. 
\ENDFOR
\FOR{$s = 0, 1, 2, \ldots, S-1$}
\FOR{$k = 0, 1, 2, \ldots, B-1$}
\STATE Sample $\w^k \in \br^d$ uniformly from a unit sphere in $\br^d$.  
\STATE Compute $\g_s^k = \frac{d}{2\delta}(f(\bar{\x}_s + \delta\w^k) - f(\bar{\x}_s - \delta\w^k))\w^k$. 
\ENDFOR
\STATE Compute $\g_s = \frac{1}{B}\sum_{k=0}^{B-1} \g_s^k$. 
\ENDFOR
\STATE Choose an index $s^\star \in \{0, 1, 2, \ldots, S-1\}$ such that $s^\star = \argmin_{s = 0, 1, 2, \ldots, S-1} \|\g_s\|$. 
\STATE \textbf{Output:} $\bar{\x}_{s^\star}$. 
\end{algorithmic}
\end{algorithm}
\subsection{Gradient-free methods}\label{subsec:GFM}
We analyze a gradient-free method (GFM) and its two-phase version (2-GFM) for optimizing a Lipschitz function $f$. Due to space limitations, we defer the proof details to Appendix~\ref{app:GFM}. 

\paragraph{Global rate estimation.} Let $f: \br^d \mapsto \br$ be a $L$-Lipschitz function and the smooth version of $f$ is then the function $f_\delta = \EE_{\su \sim \PP}[f(\x + \delta \su)]$ where $\PP$ is an uniform distribution on an unit ball in $\ell_2$-norm. Equipped with Lemma 10 from~\citet{Shamir-2017-Optimal}, we can compute an unbiased estimator for the gradient $\nabla f_\delta(\x^t)$ using  function values. 

This leads to the gradient-free method (GFM) in Algorithm~\ref{alg:GFM} that simply performs a one-step gradient descent to obtain $\x^t$. It is worth mentioning that we use a random iteration count $R$ to terminate the execution of Algorithm~\ref{alg:GFM} and this will guarantee that GFM is valid. Indeed, we only derive that $\min_{t = 1, 2, \ldots, T} \|\nabla f_\delta(\x^t)\| \leq \epsilon$ in the theoretical analysis (see also~\citet[Section~7]{Nesterov-2017-Random}) and finding the best solution from $\{\x^1, \x^2, \ldots, \x^T\}$ is difficult since the quantity $\|\nabla f_\delta(\x^t)\|$ is unknown. To estimate them using Monte Carlo simulation would incur additional approximation errors and raise some reliability issues. The idea of random output is not new but has been used by~\citet{Ghadimi-2013-Stochastic} for smooth and nonconvex stochastic optimization. Such scheme also gives us a computational gain with a factor of two in expectation. 
\begin{theorem}\label{Thm:GFM-rate-expectation}
Suppose that $f$ is $L$-Lipschitz and let $\delta > 0$ and $0 < \epsilon < 1$. Then, there exists some $T > 0$ such that the output of Algorithm~\ref{alg:GFM} with $\eta = \frac{1}{10}\sqrt{\frac{\delta(\Delta + \delta L)}{cd^{3/2} L^3 T}}$ satisfies that $\EE[\min\{\|\g\|: \g \in \partial_\delta f(\x^R)\}] \leq \epsilon$ and the total number of calls of the function value oracle is bounded by
\begin{equation*}
O\left(d^{\frac{3}{2}}\left(\frac{L^4}{\epsilon^4} + \frac{\Delta L^3}{\delta\epsilon^4}\right)\right), 
\end{equation*}
where $d \geq 1$ is the problem dimension, $L > 0$ is the Lipschtiz parameter of $f$ and $\Delta > 0$ is an upper bound for the initial objective function gap, $f(\x^0) - \inf_{\x \in \br^d} f(\x) > 0$. 
\end{theorem}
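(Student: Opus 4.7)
The plan is to reduce the Goldstein-stationarity criterion to a bound on $\|\nabla f_\delta(\x^R)\|$, and then run the standard nonconvex SGD analysis on the smooth surrogate $f_\delta$. Theorem~\ref{Thm:Goldstein-smoothing} says that $\nabla f_\delta(\x) \in \partial_\delta f(\x)$ for every $\x \in \br^d$, so in particular
\begin{equation*}
\min\{\|\g\|: \g \in \partial_\delta f(\x^R)\} \leq \|\nabla f_\delta(\x^R)\|.
\end{equation*}
Hence it suffices to prove $\EE[\|\nabla f_\delta(\x^R)\|] \leq \epsilon$ after $T = O(d^{3/2}(L^4\epsilon^{-4} + \Delta L^3\delta^{-1}\epsilon^{-4}))$ iterations. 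This step replaces a nonsmooth optimality measure by a genuinely smooth one, for which we have access to the machinery of Proposition~\ref{Prop:uniform-smoothing}.

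Next I would set up the descent lemma for $f_\delta$. By Proposition~\ref{Prop:uniform-smoothing}, $f_\delta$ has Lipschitz gradient with constant $L_\delta := cL\sqrt{d}/\delta$, so
\begin{equation*}
f_\delta(\x^{t+1}) \leq f_\delta(\x^t) - \eta\langle \nabla f_\delta(\x^t), \g^t\rangle + \tfrac{L_\delta \eta^2}{2}\|\g^t\|^2.
\end{equation*}
The two-point spherical estimator in Algorithm~\ref{alg:GFM} is the classical object analyzed by Shamir (Lemma~10): $\EE_{\w^t}[\g^t] = \nabla f_\delta(\x^t)$, and the $L$-Lipschitz control $|f(\x^t + \delta\w^t) - f(\x^t - \delta\w^t)| \leq 2\delta L |\langle \w^t,\cdot\rangle|$-type bound yields a second-moment bound of the form $\EE[\|\g^t\|^2] \leq c d L^2$ (with the same absolute constant $c$ used throughout, after possibly enlarging it). Conditioning on the history and taking expectations,
\begin{equation*}
\EE[f_\delta(\x^{t+1})] \leq \EE[f_\delta(\x^t)] - \eta\,\EE[\|\nabla f_\delta(\x^t)\|^2] + \tfrac{c^2 L^3 d^{3/2}\eta^2}{2\delta}.
\end{equation*}

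Then I would telescope from $t = 0$ to $T-1$, using $f_\delta(\x^0) - \inf_{\x} f_\delta(\x) \leq \Delta + 2\delta L$ (which follows from $|f_\delta - f| \leq \delta L$ in Proposition~\ref{Prop:uniform-smoothing}) and the fact that $R$ is uniform on $\{0,\dots,T-1\}$, to obtain
\begin{equation*}
\EE[\|\nabla f_\delta(\x^R)\|^2] \leq \tfrac{\Delta + 2\delta L}{T\eta} + \tfrac{c^2 L^3 d^{3/2}\eta}{2\delta}.
\end{equation*}
Balancing the two terms gives exactly the stepsize $\eta \asymp \sqrt{\delta(\Delta+\delta L)/(cd^{3/2}L^3 T)}$ prescribed in the theorem, and yields $\EE[\|\nabla f_\delta(\x^R)\|^2] \leq O(\sqrt{d^{3/2}L^3(\Delta+\delta L)/(\delta T)})$. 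Applying Jensen to pass from the second moment to the first, and demanding that this be at most $\epsilon$, I would solve for $T$ and obtain $T = O(d^{3/2}L^3(\Delta+\delta L)/(\delta\epsilon^4))$, which rearranges into the announced complexity $O\!\bigl(d^{3/2}(L^4/\epsilon^4 + \Delta L^3/(\delta\epsilon^4))\bigr)$ per function value (two values per iteration only alters the absolute constant).

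The main obstacle is the variance bound $\EE[\|\g^t\|^2] = O(dL^2)$ with the right power of $d$: the crude estimate $\|\g^t\|\leq dL$ would give $d^2L^2$ and inflate the final rate by an extra factor of $d$. Getting the sharper $dL^2$ scaling requires Shamir's symmetry/Stokes argument for the spherical finite-difference estimator rather than a pointwise bound, and this is exactly the place where the Lipschitz hypothesis on $f$ (as opposed to a gradient bound on $f_\delta$) must be used carefully. Everything else is a routine assembly of the smooth nonconvex SGD recipe with the randomized-smoothing identity from Theorem~\ref{Thm:Goldstein-smoothing}.
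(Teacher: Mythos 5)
Your proposal follows essentially the same route as the paper's proof: reduce to $\|\nabla f_\delta(\x^R)\|$ via Theorem~\ref{Thm:Goldstein-smoothing}, apply the descent lemma using the $cL\sqrt{d}/\delta$-smoothness of $f_\delta$ from Proposition~\ref{Prop:uniform-smoothing}, invoke the Shamir-style second-moment bound $\EE[\|\g^t\|^2]=O(dL^2)$ (the paper's Lemma~\ref{Lemma:GFM-first}, proved via concentration on the sphere), telescope, and use the uniformly random index $R$ with Jensen. The only deviations are immaterial constants (e.g., $\Delta+2\delta L$ versus the paper's $\Delta+\delta L$), and you correctly flag the linear-in-$d$ variance bound as the crux.
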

\begin{remark}
Theorem~\ref{Thm:GFM-rate-expectation} illustrates the difference between gradient-based and gradient-free methods in nonsmooth nonconvex optimization. Indeed,~\citet{Davis-2022-Gradient} has recently proved the rate of $O(\delta^{-1}\epsilon^{-3})$ for a randomized gradient-based method in terms of $(\delta, \epsilon)$-Goldstein stationarity. Further, Theorem~\ref{Thm:GFM-rate-expectation} demonstrates that nonsmooth nonconvex optimization is likely to be intrinsically harder than all other standard settings. More specifically, the state-of-the-art rate for gradient-free methods is $O(d\epsilon^{-2})$ for nonsmooth convex optimization in terms of objective function value gap~\citep{Duchi-2015-Optimal} and smooth nonconvex optimization in terms of gradient norm~\citep{Nesterov-2017-Random}. Thus, the dependence on $d \geq 1$ is linear in their bounds yet $d^{\frac{3}{2}}$ in our bound. We believe it is promising to either improve the rate of gradient-free methods or show the impossibility by establishing a lower bound.
\end{remark}
\paragraph{Large-deviation estimation.} While Theorem~\ref{Thm:GFM-rate-expectation} establishes the expected convergence rate over many runs of Algorithm~\ref{alg:GFM}, we are also interested in the large-deviation properties for a single run. Indeed, we hope to establish a complexity bound for computing a \textit{$(\delta, \epsilon, \Lambda)$-solution}; that is, a point $\x \in \br^d$ satisfying $\Prob(\min\{\|\g\|: \g \in \partial_\delta f(\x)\} \leq \epsilon) \geq 1 - \Lambda$ for some $\delta > 0$ and $0 < \epsilon, \Lambda < 1$. By Theorem~\ref{Thm:GFM-rate-expectation} and Markov's inequality,
\begin{equation*}
\Prob\left(\min\{\|\g\|: \g \in \partial_\delta f(\x^R)\} \geq \lambda\EE[\min\{\|\g\|: \g \in \partial_\delta f(\x^R)\}]\right) \leq \tfrac{1}{\lambda}, \quad \textnormal{for all } \lambda > 0, 
\end{equation*}
we conclude that the total number of calls of the function value oracle is bounded by
\begin{equation}\label{bound:GFM-LD-naive}
O\left(d^{\frac{3}{2}}\left(\frac{L^4}{\Lambda^4\epsilon^4} + \frac{\Delta L^3}{\delta\Lambda^4\epsilon^4}\right)\right). 
\end{equation}
This complexity bound is rather pessimistic in terms of its dependence on $\Lambda$ which is often set to be small in practice. To improve the bound, we combine Algorithm~\ref{alg:GFM} with a post-optimization procedure~\citep{Ghadimi-2013-Stochastic}, leading to a two-phase gradient-free method (2-GFM), shown in Algorithm~\ref{alg:2-GFM}. 
\begin{theorem}\label{Thm:GFM-rate-deviation}
Suppose that $f$ is $L$-Lipschitz and let $\delta > 0$ and $0 < \epsilon, \Lambda < 1$. Then, there exists some $T, S, B > 0$ such that the output of Algorithm~\ref{alg:2-GFM} with $\eta = \frac{1}{10}\sqrt{\frac{\delta(\Delta + \delta L)}{cd^{3/2} L^3 T}}$ satisfies that $\Prob(\min\{\|\g\|: \g \in \partial_\delta f(\bar{\x}_{s^\star})\}] \geq \epsilon) \leq \Lambda$ and the total number of calls of the function value oracle is bounded by
\begin{equation*}
O\left(d^{\frac{3}{2}}\left(\frac{L^4}{\epsilon^4} + \frac{\Delta L^3}{\delta\epsilon^4}\right)\log_2\left(\frac{1}{\Lambda}\right) + \frac{dL^2}{\Lambda\epsilon^2}\log_2\left(\frac{1}{\Lambda}\right)\right), 
\end{equation*}
where $d \geq 1$ is the problem dimension, $L > 0$ is the Lipschtiz parameter of $f$ and $\Delta > 0$ is an upper bound for the initial objective function gap, $f(\x^0) - \inf_{\x \in \br^d} f(\x) > 0$. 
\end{theorem}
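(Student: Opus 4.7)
The plan is to carry out the standard two-phase post-optimization analysis of \citet{Ghadimi-2013-Stochastic}, adapted to the $(\delta,\epsilon)$-Goldstein setting via Theorem~\ref{Thm:Goldstein-smoothing}. Phase 1 (the $S$ independent outer calls to Algorithm~\ref{alg:GFM}) produces a pool of candidates $\bar{\x}_0,\ldots,\bar{\x}_{S-1}$ such that, with high probability, at least one is good; Phase 2 (the $B$-sample batches yielding $\g_s$) provides an accurate enough proxy for $\|\nabla f_\delta(\bar{\x}_s)\|$ that the minimizer $s^\star$ is also good. Because Theorem~\ref{Thm:Goldstein-smoothing} guarantees $\nabla f_\delta(\bar{\x}_{s^\star}) \in \partial_\delta f(\bar{\x}_{s^\star})$, any upper bound $\|\nabla f_\delta(\bar{\x}_{s^\star})\|\le\epsilon$ immediately yields $(\delta,\epsilon)$-Goldstein stationarity.

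For Phase 1, I would invoke Theorem~\ref{Thm:GFM-rate-expectation} with reduced tolerance $\epsilon/4$; its underlying proof actually bounds $\EE[\|\nabla f_\delta(\bar{\x}_s)\|]\le \epsilon/4$, so Markov's inequality gives $\Prob(\|\nabla f_\delta(\bar{\x}_s)\|\ge\epsilon/2)\le 1/2$ for each independent run. Choosing $S=\lceil\log_2(2/\Lambda)\rceil$ then makes the probability that every candidate is bad at most $2^{-S}\le \Lambda/2$. The Phase 1 cost is $S\cdot T=O\bigl(d^{3/2}(L^4/\epsilon^4+\Delta L^3/(\delta\epsilon^4))\log(1/\Lambda)\bigr)$ oracle evaluations.

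For Phase 2, I would first verify that $\g_s^k$ is an unbiased estimator of $\nabla f_\delta(\bar{\x}_s)$ (the standard single-sphere two-point formula, as used by \citet{Shamir-2017-Optimal}) and that $\EE[\|\g_s^k\|^2]=O(dL^2)$, using $L$-Lipschitzness and $\|\w^k\|=1$. Averaging $B$ such samples gives $\EE[\|\g_s-\nabla f_\delta(\bar{\x}_s)\|^2]=O(dL^2/B)$. Taking $B$ on the order of $dL^2/(\Lambda\epsilon^2)$ (absorbing logarithmic factors arising from a union bound over the $S$ candidates), Markov's inequality yields $\max_s \|\g_s-\nabla f_\delta(\bar{\x}_s)\|\le \epsilon/4$ with probability at least $1-\Lambda/2$, at total Phase 2 cost on the order of $dL^2\log(1/\Lambda)/(\Lambda\epsilon^2)$, matching the advertised rate.

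Conditioning on both events (joint probability at least $1-\Lambda$), pick $s_g$ with $\|\nabla f_\delta(\bar{\x}_{s_g})\|\le\epsilon/2$ and use the minimality $\|\g_{s^\star}\|\le\|\g_{s_g}\|$ together with two triangle inequalities to obtain
\[
\|\nabla f_\delta(\bar{\x}_{s^\star})\|\le \|\g_{s^\star}-\nabla f_\delta(\bar{\x}_{s^\star})\|+\|\g_{s^\star}\|\le \tfrac{\epsilon}{4}+\|\g_{s_g}\|\le \tfrac{\epsilon}{4}+\tfrac{\epsilon}{4}+\|\nabla f_\delta(\bar{\x}_{s_g})\|\le \epsilon,
\]
which, combined with $\nabla f_\delta(\bar{\x}_{s^\star})\in\partial_\delta f(\bar{\x}_{s^\star})$, yields $\min\{\|\g\|:\g\in\partial_\delta f(\bar{\x}_{s^\star})\}\le\epsilon$ as required. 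The main obstacle I anticipate is the Phase 2 bookkeeping: one has to pin down the variance of the sphere estimator $\g_s^k$ and then thread the failure budget $\Lambda$ through both phases so that the stated dependence on $\Lambda$, $\epsilon$, $d$ and $L$ is achievable; by contrast, Phase 1 is close to a black-box invocation of Theorem~\ref{Thm:GFM-rate-expectation} under independent repetition.
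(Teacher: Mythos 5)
Your proposal is correct and follows essentially the same route as the paper's proof: Phase 1 uses Markov's inequality on the per-run guarantee from Theorem~\ref{Thm:GFM-rate-expectation} together with independence of the $S$ runs to get the $2^{-S}$ failure probability, and Phase 2 uses the second-moment bound $\EE[\|\g_s^k\|^2\mid\bar{\x}_s]\le 16\sqrt{2\pi}\,dL^2$ from Lemma~\ref{Lemma:GFM-first}, averaged over $B$ samples, with a Chebyshev-type bound and a union bound over $s$, before combining the two events exactly as you do (the paper phrases the final step with squared norms and $(a+b)^2\le 2a^2+2b^2$ rather than your triangle inequalities, which is an immaterial difference). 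The only bookkeeping detail you flag as a concern --- threading the failure budget and the union-bound factor $S$ into the choice of $B$ --- is handled in the paper by taking $\lambda = 2(S+1)/\Lambda$ and $B = \Theta\bigl(dL^2(S+1)/(\Lambda\epsilon^2)\bigr)$, just as you anticipate.
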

Clearly, the bound in Theorem~\ref{Thm:GFM-rate-deviation} is significantly smaller than the corresponding one in Eq.~\eqref{bound:GFM-LD-naive} in terms of the dependence on $1/\Lambda$, demonstrating the power of the post-optimization phase. 

\subsection{Stochastic gradient-free methods}
We turn to the analysis of a stochastic gradient-free method (SGFM) and its two-phase version (2-SGFM) for optimizing a Lipschitz function $f(\cdot) = \EE_{\xi \in \PP_\mu}[F(\cdot, \xi)]$. 

\paragraph{Global rate estimation.} In contrast to minimizing a deterministic function $f$, we only have access to the noisy function value $F(\x, \xi)$ at any point $\x \in \br^d$ where a data sample $\xi$ is drawn from a distribution $\PP_\mu$. Intuitively, this is a more challenging setup. It has been studied before in the setting of optimizing a nonsmooth convex function~\citep{Duchi-2015-Optimal, Nesterov-2017-Random} or a smooth nonconvex function~\citep{Ghadimi-2013-Stochastic}. As in these papers, we assume that (i) $F(\cdot, \xi)$ is $L(\xi)$-Lipschitz with $\EE_{\xi \in \PP_\mu}[L^2(\xi)] \leq G^2$ for some $G > 0$ and (ii) $\EE[F(\x, \xi^t)] = f(\x)$ for all $\x \in \br^d$ where $\xi^t$ is simulated from $\PP_\mu$ at the $t^{\textnormal{th}}$ iteration. 

Despite the noisy function value, we can compute an unbiased estimator of the gradient $\nabla f_\delta(\x^t)$, where $f_\delta = \EE_{\su \sim \PP}[f(\x + \delta \su)] = \EE_{\su \sim \PP, \xi \in \PP_\mu}[F(\x + \delta\su, \xi)]$. In particular, we have $\hat{\g}^t = \frac{d}{2\delta}(F(\x^t + \delta\w^t, \xi^t) - F(\x^t - \delta\w^t, \xi^t))\w^t$. Clearly, under our assumption, we have
\begin{equation*}
\EE_{\su \sim \PP, \xi \in \PP_\mu}[\hat{\g}^t] = \EE_{\su \sim \PP}[\EE_{\xi \in \PP_\mu}[\hat{\g}^t \mid \su]] = \EE_{\su \sim \PP}[\g^t] = \nabla f_\delta(\x^t), 
\end{equation*}
where $\g^t$ is defined in Algorithm~\ref{alg:GFM}. However, the variance of the estimator $\hat{\g}^t$ can be undesirably large since $F(\cdot, \xi)$ is $L(\xi)$-Lipschitz for a (possibly unbounded) random variable $L(\xi) > 0$. To resolve this issue, we revisit~\citet[Lemma~10]{Shamir-2017-Optimal} and show that in deriving an upper bound for $\EE_{\su \sim \PP, \xi \in \PP_\mu}[\|\hat{\g}^t\|^2]$ it suffices to assume that $\EE_{\xi \in \PP_\mu}[L^2(\xi)] \leq G^2$ for some constant $G > 0$. The resulting bound achieves a linear dependence in the problem dimension $d > 0$ which is the same as in~\citet[Lemma~10]{Shamir-2017-Optimal}. Note that the setup with \textit{convex} and $L(\xi)$-Lipschitz functions $F(\cdot, \xi)$ has been considered in~\citet{Duchi-2015-Optimal}. However, our estimator is different from their estimator of $\hat{\g}^t  = \frac{d}{\delta}(F(\x^t + \delta\w^t, \xi^t) - F(\x^t, \xi^t))\w^t$ which essentially suffers from the quadratic dependence in $d > 0$. It is also necessary to employ a random iteration count $R$ to terminate Algorithm~\ref{alg:SGFM}. 
\begin{theorem}\label{Thm:SGFM-rate-expectation}
Suppose that $F(\cdot, \xi)$ is $L(\xi)$-Lipschitz with $\EE_{\xi \in \PP_\mu}[L^2(\xi)] \leq G^2$ for some $G > 0$ and let $\delta > 0$ and $0 < \epsilon < 1$. Then, there exists some $T > 0$ such that the output of Algorithm~\ref{alg:SGFM} with $\eta = \frac{1}{10}\sqrt{\frac{\delta(\Delta + \delta G)}{cd^{3/2} G^3 T}}$ satisfies that $\EE[\min\{\|\g\|: \g \in \partial_\delta f(\x^R)\}] \leq \epsilon$ and the total number of calls of the noisy function value oracle is bounded by
\begin{equation*}
O\left(d^{\frac{3}{2}}\left(\frac{G^4}{\epsilon^4} + \frac{\Delta G^3}{\delta\epsilon^4}\right)\right), 
\end{equation*}
where $d \geq 1$ is the problem dimension, $L > 0$ is the Lipschtiz parameter of $f$ and $\Delta > 0$ is an upper bound for the initial objective function gap, $f(\x^0) - \inf_{\x \in \br^d} f(\x) > 0$. 
\end{theorem}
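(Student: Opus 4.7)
The plan is to reduce the theorem to a smooth nonconvex stochastic-gradient analysis on the uniformly smoothed surrogate $f_\delta(\x) = \EE_{\su \sim \PP, \xi \in \PP_\mu}[F(\x + \delta \su, \xi)]$ and then invoke Theorem~\ref{Thm:Goldstein-smoothing} to translate the resulting bound on $\|\nabla f_\delta(\x^R)\|$ into a guarantee on $(\delta,\epsilon)$-Goldstein stationarity of the original $f$. Proposition~\ref{Prop:uniform-smoothing}, applied with the effective Lipschitz constant $G$ in place of $L$, guarantees that $f_\delta$ is $G$-Lipschitz, differentiable with $L_\delta := cG\sqrt{d}/\delta$-Lipschitz gradient, and satisfies $|f_\delta(\x) - f(\x)| \leq \delta G$, so in particular $f_\delta(\x^0) - \inf_{\x} f_\delta(\x) \leq \Delta + 2\delta G$.

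The first concrete step is to establish the unbiasedness $\EE[\hat{\g}^t \mid \x^t] = \nabla f_\delta(\x^t)$, which follows from the two-point spherical smoothing identity already used in Theorem~\ref{Thm:Goldstein-smoothing} combined with the tower property and $\EE_\xi[F(\cdot,\xi)] = f(\cdot)$. The main technical step is the second-moment bound $\EE[\|\hat{\g}^t\|^2] \leq c\, d\, G^2$. For a deterministic Lipschitz constant this is~\citet[Lemma~10]{Shamir-2017-Optimal}; I would re-examine their argument and note that $|F(\x^t+\delta\w^t,\xi^t) - F(\x^t-\delta\w^t,\xi^t)| \leq 2\delta L(\xi^t)$, so that conditional on $\xi^t$ the Shamir bound delivers $\EE[\|\hat{\g}^t\|^2 \mid \xi^t] \leq c\, d\, L(\xi^t)^2$, and then taking expectation in $\xi^t$ and using $\EE[L^2(\xi^t)] \leq G^2$ yields the claim. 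The crucial feature is that the antithetic two-point estimator keeps the dimension dependence linear; the one-point estimator $\frac{d}{\delta}(F(\x+\delta\w,\xi)-F(\x,\xi))\w$ used in~\citet{Duchi-2015-Optimal} would produce a $d^2$ factor instead.

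With these two ingredients the rest is the standard Ghadimi--Lan descent argument on $f_\delta$. The $L_\delta$-smoothness of $f_\delta$ gives
\begin{equation*}
\EE[f_\delta(\x^{t+1})] \leq \EE[f_\delta(\x^t)] - \eta\,\EE[\|\nabla f_\delta(\x^t)\|^2] + \tfrac{\eta^2 L_\delta}{2}\,\EE[\|\hat{\g}^t\|^2].
\end{equation*}
Telescoping $t = 0,\ldots,T-1$, dividing by $\eta T$, and using the uniform random index $R$ together with the variance bound and $L_\delta = cG\sqrt{d}/\delta$ yields
\begin{equation*}
\EE[\|\nabla f_\delta(\x^R)\|^2] \;\leq\; \frac{\Delta + 2\delta G}{\eta T} + \frac{c^2\, d^{3/2}\, G^3\, \eta}{2\delta}.
\end{equation*}
Plugging in the stated $\eta = \frac{1}{10}\sqrt{\delta(\Delta+\delta G)/(c d^{3/2} G^3 T)}$ balances the two terms and yields $\EE[\|\nabla f_\delta(\x^R)\|^2] = O\bigl(\sqrt{d^{3/2} G^3(\Delta+\delta G)/(\delta T)}\bigr)$; requiring this to be at most $\epsilon^2$ gives $T = O\bigl(d^{3/2}(G^4/\epsilon^4 + \Delta G^3/(\delta\epsilon^4))\bigr)$, which matches the oracle count up to the factor of two per iteration.

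To close the loop, Jensen's inequality gives $\EE[\|\nabla f_\delta(\x^R)\|] \leq \sqrt{\EE[\|\nabla f_\delta(\x^R)\|^2]} \leq \epsilon$, and Theorem~\ref{Thm:Goldstein-smoothing} ensures $\nabla f_\delta(\x^R) \in \partial_\delta f(\x^R)$, hence $\min\{\|\g\|:\g\in\partial_\delta f(\x^R)\} \leq \|\nabla f_\delta(\x^R)\|$, which delivers the conclusion. The main obstacle I anticipate is the second-moment bound in the stochastic-Lipschitz regime: one must be careful that the randomness of $L(\xi)$ only enters through the second moment and that the sharper spherical-concentration estimate (rather than the crude $\|\w\|=1$ bound) is used inside Shamir's argument so that the dimension dependence remains $d$ rather than $d^2$.
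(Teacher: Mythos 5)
Your proposal is correct and follows essentially the same route as the paper: unbiasedness of the two-point estimator via symmetry and the tower property, the second-moment bound $\EE[\|\hat{\g}^t\|^2]\leq O(dG^2)$ obtained by applying the spherical concentration argument conditionally on $\xi^t$ (so that only $\EE[L^2(\xi^t)]\leq G^2$ is needed), the smoothness-based descent inequality on $f_\delta$, telescoping with the stated stepsize, and finally Jensen plus Theorem~\ref{Thm:Goldstein-smoothing} to pass from $\|\nabla f_\delta(\x^R)\|$ to Goldstein stationarity. The only (immaterial) deviation is your bound $\Delta+2\delta G$ on the surrogate gap where the paper uses $\Delta+\delta G$ by noting $f_\delta\geq\inf f$.
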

In the stochastic setting, the gradient-based method achieves the rate of $O(\delta^{-1}\epsilon^{-4})$ for a randomized gradient-based method in terms of $(\delta, \epsilon)$-Goldstein stationarity~\citep{Davis-2022-Gradient}. As such, our bound in Theorem~\ref{Thm:SGFM-rate-expectation} is tight up to the problem dimension $d \geq 1$. Further, the state-of-the-art rate for stochastic gradient-free methods is $O(d\epsilon^{-2})$ for nonsmooth convex optimization in terms of objective function value gap~\citep{Duchi-2015-Optimal} and $O(d\epsilon^{-4})$ for smooth nonconvex optimization in terms of gradient norm~\citep{Ghadimi-2013-Stochastic}. Thus, Theorem~\ref{Thm:SGFM-rate-expectation} demonstrates that nonsmooth nonconvex stochastic optimization is essentially the most difficult one among than all these standard settings. 
\begin{algorithm}[!t]
\caption{Stochastic Gradient-Free Method (SGFM)}\label{alg:SGFM}
\begin{algorithmic}[1]
\STATE \textbf{Input:} initial point $\x^0 \in \br^d$, stepsize $\eta > 0$, problem dimension $d \geq 1$, smoothing parameter $\delta$ and iteration number $T \geq 1$. 
\FOR{$t = 0, 1, 2, \ldots, T$}
\STATE Simulate $\xi^t$ from the distribution $\PP_\mu$. 
\STATE Sample $\w^t \in \br^d$ uniformly from a unit sphere in $\br^d$.  
\STATE Compute $\hat{\g}^t = \frac{d}{2\delta}(F(\x^t + \delta\w^t, \xi^t) - F(\x^t - \delta\w^t, \xi^t))\w^t$. 
\STATE Compute $\x^{t+1} = \x^t - \eta\g^t$. 
\ENDFOR
\STATE \textbf{Output:} $\x^R$ where $R \in \{0, 1, 2, \ldots, T-1\}$ is uniformly sampled. 
\end{algorithmic}
\end{algorithm}
\begin{algorithm}[!t]
\caption{Two-Phase Stochastic Gradient-Free Method (2-SGFM)}\label{alg:2-SGFM}
\begin{algorithmic}[1]
\STATE \textbf{Input:} initial point $\x^0 \in \br^d$, stepsize $\eta > 0$, problem dimension $d \geq 1$, smoothing parameter $\delta$, iteration number $T \geq 1$, number of rounds $S \geq 1$ and sample size $B$. 
\FOR{$s = 0, 1, 2, \ldots, S-1$}
\STATE Call Algorithm~\ref{alg:SGFM} with $\x^0$, $\eta$, $d$, $\delta$ and $T$ and let $\bar{\x}_s$ be an output. 
\ENDFOR
\FOR{$s = 0, 1, 2, \ldots, S-1$}
\FOR{$k = 0, 1, 2, \ldots, B-1$}
\STATE Simulate $\xi^k$ from the distribution $\PP_\mu$. 
\STATE Sample $\w^k \in \br^d$ uniformly from a unit sphere in $\br^d$.  
\STATE Compute $\hat{\g}_s^k = \frac{d}{2\delta}(F(\bar{\x}_s + \delta\w^k, \delta^k) - F(\bar{\x}_s - \delta\w^k, \delta^k))\w^k$. 
\ENDFOR
\STATE Compute $\hat{\g}_s = \frac{1}{B}\sum_{k=0}^{B-1} \hat{\g}_s^k$. 
\ENDFOR
\STATE Choose an index $s^\star \in \{0, 1, 2, \ldots, S-1\}$ such that $s^\star = \argmin_{s = 0, 1, 2, \ldots, S-1} \|\hat{\g}_s\|$. 
\STATE \textbf{Output:} $\bar{\x}_{s^\star}$. 
\end{algorithmic}
\end{algorithm}

\paragraph{Large-deviation estimation.} As in the case of GFM, we hope to establish a complexity bound of SGFM for computing a $(\delta, \epsilon, \Lambda)$-solution. By Theorem~\ref{Thm:SGFM-rate-expectation} and Markov's inequality, we obtain that the total number of calls of the noisy function value oracle is bounded by
\begin{equation}\label{bound:SGFM-LD-naive}
O\left(d^{\frac{3}{2}}\left(\frac{G^4}{\Lambda^4\epsilon^4} + \frac{\Delta G^3}{\delta\Lambda^4\epsilon^4}\right)\right). 
\end{equation}
We also propose a two-phase stochastic gradient-free method (2-SGFM) in Algorithm~\ref{alg:2-SGFM} by combining Algorithm~\ref{alg:SGFM} with a post-optimization procedure. 
\begin{theorem}\label{Thm:SGFM-rate-deviation}
Suppose that $F(\cdot, \xi)$ is $L(\xi)$-Lipschitz with $\EE_{\xi \in \PP_\mu}[L^2(\xi)] \leq G^2$ for some $G > 0$ and let $\delta > 0$ and $0 < \epsilon, \Lambda < 1$. Then, there exists some $T, S, B > 0$ such that the output of Algorithm~\ref{alg:2-SGFM} with $\eta = \frac{1}{10}\sqrt{\frac{\delta(\Delta + \delta G)}{cd^{3/2} G^3 T}}$ satisfies that $\Prob(\min\{\|\g\|: \g \in \partial_\delta f(\bar{\x}_{s^\star})\}] \geq \epsilon) \leq \Lambda$ and the total number of calls of the noisy function value oracle is bounded by
\begin{equation*}
O\left(d^{\frac{3}{2}}\left(\frac{G^4}{\epsilon^4} + \frac{\Delta G^3}{\delta\epsilon^4}\right)\log_2\left(\frac{1}{\Lambda}\right) + \frac{dG^2}{\Lambda\epsilon^2}\log_2\left(\frac{1}{\Lambda}\right)\right), 
\end{equation*}
where $d \geq 1$ is the problem dimension, $L > 0$ is the Lipschtiz parameter of $f$ and $\Delta > 0$ is an upper bound for the initial objective function gap $f(\x^0) - \inf_{\x \in \br^d} f(\x) > 0$. 
\end{theorem}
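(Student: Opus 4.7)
The plan is to mirror the proof of Theorem~\ref{Thm:GFM-rate-deviation}, replacing the deterministic in-expectation guarantee by its stochastic analog from Theorem~\ref{Thm:SGFM-rate-expectation} and supplying a second-moment bound on the stochastic estimator $\hat{\g}_s^k$. Throughout I would write $Y_s := \min\{\|\g\|:\g \in \partial_\delta f(\bar{\x}_s)\}$; since Theorem~\ref{Thm:Goldstein-smoothing} gives $\nabla f_\delta(\bar{\x}_s) \in \partial_\delta f(\bar{\x}_s)$, we have $Y_s \leq \|\nabla f_\delta(\bar{\x}_s)\|$, so it suffices to control the latter.

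\textbf{Phase 1 analysis.} I would invoke Theorem~\ref{Thm:SGFM-rate-expectation} on each of the $S$ independent SGFM runs, tuning $T$ so that $\EE[\|\nabla f_\delta(\bar{\x}_s)\|] \leq \epsilon/4$. Markov's inequality then yields $\Prob(\|\nabla f_\delta(\bar{\x}_s)\| \leq \epsilon/2) \geq 1/2$, and by independence of the $S = \lceil \log_2(2/\Lambda) \rceil$ rounds, the probability that no ``good'' index $s^\ast$ with $\|\nabla f_\delta(\bar{\x}_{s^\ast})\| \leq \epsilon/2$ appears is at most $2^{-S} \leq \Lambda/2$.

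\textbf{Phase 2 and selection.} Revisiting the proof of Lemma~10 of~\citet{Shamir-2017-Optimal}, I would check that the Lipschitz constant $L(\xi)$ enters the variance computation only through $L(\xi)^2$, so that $\EE[L^2(\xi)] \leq G^2$ suffices to give $\EE[\|\hat{\g}_s^k - \nabla f_\delta(\bar{\x}_s)\|^2 \mid \bar{\x}_s] \leq c'dG^2$; averaging $B$ iid samples then drives the variance down to $c'dG^2/B$. Markov plus a union bound over $s$ gives $\Prob(\max_s \|\hat{\g}_s - \nabla f_\delta(\bar{\x}_s)\| > \epsilon/4) \leq 16c'SdG^2/(B\epsilon^2)$, which I would drive to $\Lambda/2$ by choosing $B = \Theta(SdG^2/(\Lambda\epsilon^2))$. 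On the intersection of the two favorable events, the triangle inequality gives $\|\hat{\g}_{s^\ast}\| \leq 3\epsilon/4$, so the selection rule $s^\star = \argmin_s \|\hat{\g}_s\|$ forces $\|\hat{\g}_{s^\star}\| \leq 3\epsilon/4$, and a second triangle inequality yields $Y_{s^\star} \leq \|\nabla f_\delta(\bar{\x}_{s^\star})\| \leq \epsilon$ with overall probability at least $1 - \Lambda$.

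The total oracle count $2ST + 2SB$, with $T$ from Theorem~\ref{Thm:SGFM-rate-expectation}, $S = O(\log_2(1/\Lambda))$, and $B = O(dG^2/(\Lambda\epsilon^2))$, then recovers the stated bound (the extra $S$ from the union bound in $B$ is absorbed into the outer $\log_2(1/\Lambda)$ factor). The only genuinely new technical ingredient beyond the 2-GFM analysis is the second-moment bound on $\hat{\g}_s^k$ under the weak moment assumption $\EE[L^2(\xi)] \leq G^2$ rather than an almost-sure Lipschitz bound, and I expect this to be the main obstacle; once it is in hand, the rest is a direct adaptation of the proof of Theorem~\ref{Thm:GFM-rate-deviation} with $L$ replaced by $G$.
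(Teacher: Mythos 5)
Your proposal is correct and follows essentially the same route as the paper's proof: Markov plus independence across the $S$ runs for the first phase, the second-moment bound $\EE[\|\hat{\g}_s^k\|^2\mid\bar{\x}_s]=O(dG^2)$ under $\EE[L^2(\xi)]\leq G^2$ (the paper's Lemma~\ref{Lemma:SGFM-first}) combined with Chebyshev/Markov and a union bound for the post-optimization phase, and Theorem~\ref{Thm:Goldstein-smoothing} to pass from $\|\nabla f_\delta(\bar{\x}_{s^\star})\|$ to Goldstein stationarity. The only cosmetic difference is that you argue on the intersection of good events via triangle inequalities on norms, whereas the paper works with the squared-norm decomposition $\|\nabla f_\delta(\bar{\x}_{s^\star})\|^2\leq 4\min_s\|\nabla f_\delta(\bar{\x}_s)\|^2+4\max_s\|\hat{\g}_s-\nabla f_\delta(\bar{\x}_s)\|^2+2\|\hat{\g}_{s^\star}-\nabla f_\delta(\bar{\x}_{s^\star})\|^2$ and sums the failure probabilities; these are equivalent up to constants.
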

\textbf{Further discussions.} We remark that the choice of stepsize $\eta$ in all of our zeroth-order methods depend on $\Delta$, whereas such dependence is not necessary in the first-order setting; see e.g.,~\citet{Zhang-2020-Complexity}. Setting the stepsize without any prior knowledge of $\Delta$, our methods can still achieve finite-time convergence guarantees but the order would become worse. This is possibly because the first-order information gives more characterization of the objective function than the zeroth-order information, so that for first-order methods the stepsize can be independent of more problem parameters without sacrificing the bound. A bit on the positive side is that, it suffices for our zeroth-order methods to know an estimate of the upper bound of $\Theta(\Delta)$, which can be done in certain application problems. 

Moreover, we highlight that $\delta > 0$ is the desired tolerance in our setting. In fact, $(\delta, \epsilon)$-Goldstein stationarity (see Definition~\ref{def:Goldstein}) relaxes $\epsilon$-Clarke stationarity and our methods pursue an $(\delta, \epsilon)$-stationary point since finding an $\epsilon$-Clarke point is intractable. This is different from smooth optimization where $\epsilon$-Clarke stationarity reduces to $\nabla f(\x) \leq \epsilon$ and becomes tractable. In this context, the existing zeroth-order methods are designed to pursue an $\epsilon$-stationary point. Notably, a $(\delta, \epsilon)$-Goldstein stationary point is provably an $\epsilon$-stationary point in smooth optimization if we choose $\delta$ that relies on $d$ and $\epsilon$. 

\section{Experiment}\label{sec:exp}
We conduct numerical experiments to validate the effectiveness of our proposed methods. In particular, we evaluate the performance of our two-phase version of SGFM (Algorithm~\ref{alg:2-SGFM}) on the task of image classification using convolutional neural networks (CNNs) with ReLU activations. The dataset we use is the \textsc{Mnist} dataset\footnote{http://yann.lecun.com/exdb/mnist}~\citep{Lecun-1998-Gradient} and the CNN framework we use is: (i) we set two convolution layers and two fully connected layers where the dropout layers~\citep{Srivastava-2014-Dropout} are used before each fully connected layer, and (ii) two convolution layers and the first fully connected layer are associated with ReLU activation. It is worth mentioning that our setup follows the default one\footnote{https://github.com/pytorch/examples/tree/main/mnist} and the similar setup was also consider in~\citet{Zhang-2020-Complexity} for evaluating the gradient-based methods (see the setups and results for \textsc{CIFAR10} dataset in Appendix~\ref{sec:cifar10}). 
\begin{figure*}[!t]
\centering
\begin{subfigure}[t]{.24\linewidth}
\centering
\includegraphics[width=.99\linewidth]{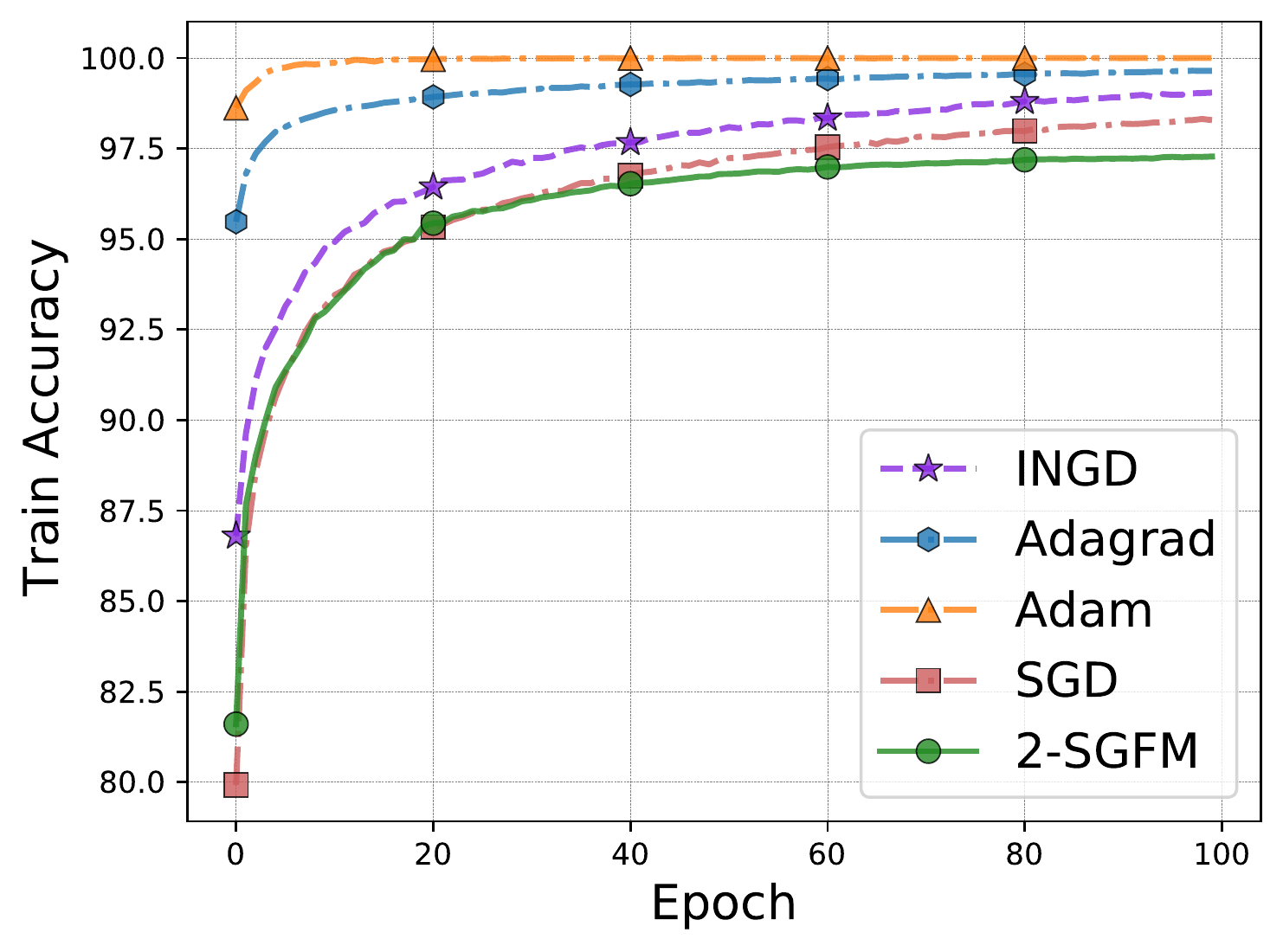}
\caption{\scriptsize{Train Accuracy}}
\end{subfigure}
\begin{subfigure}[t]{.24\linewidth}
\centering
\includegraphics[width=.99\linewidth]{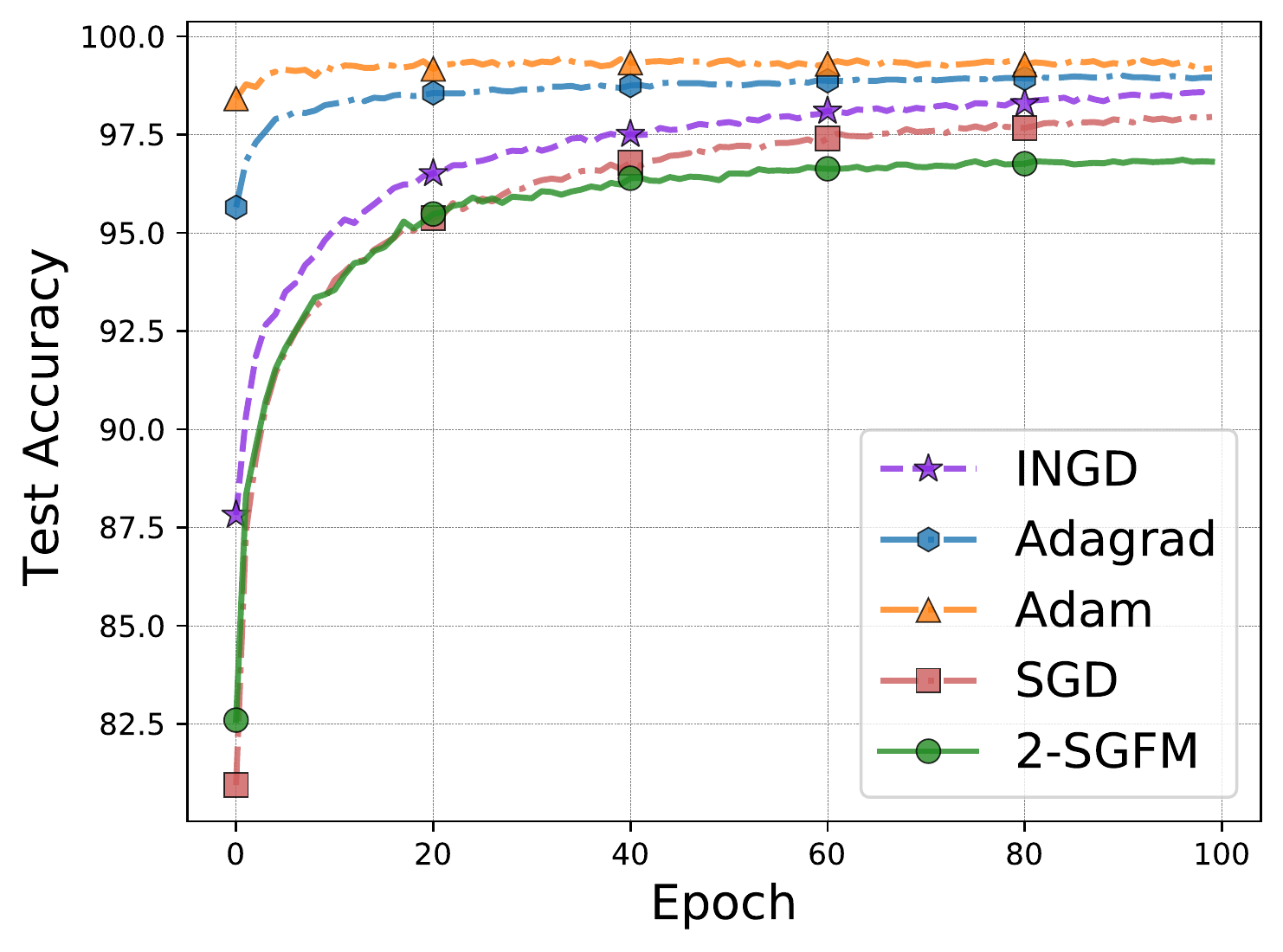}
\caption{\scriptsize{Test Accuracy}}
\end{subfigure}
\begin{subfigure}[t]{.24\linewidth}
\centering
\includegraphics[width=.99\linewidth]{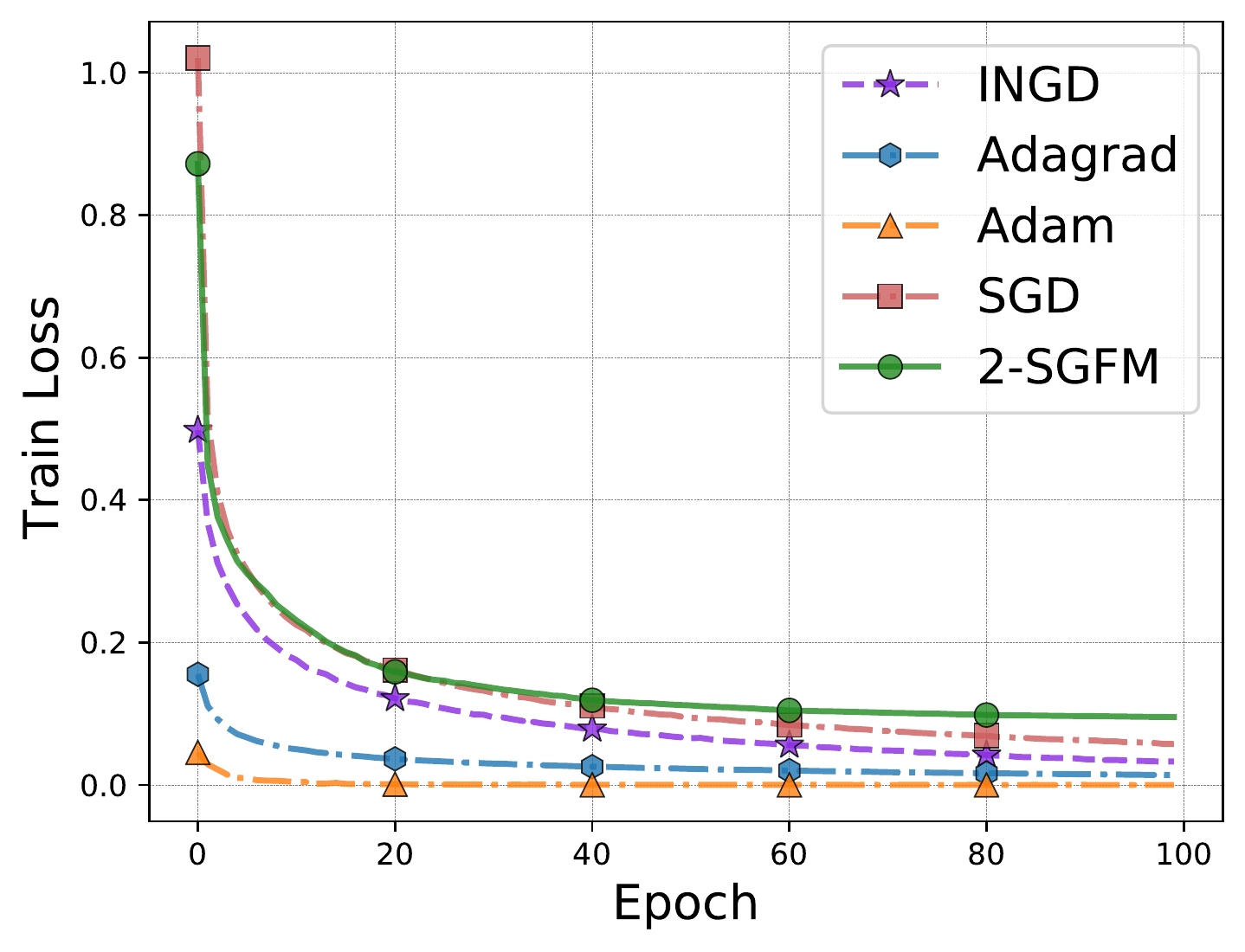}
\caption{\scriptsize{Train Loss}}
\end{subfigure}
\begin{subfigure}[t]{.24\linewidth}
\centering
\includegraphics[width=.99\linewidth]{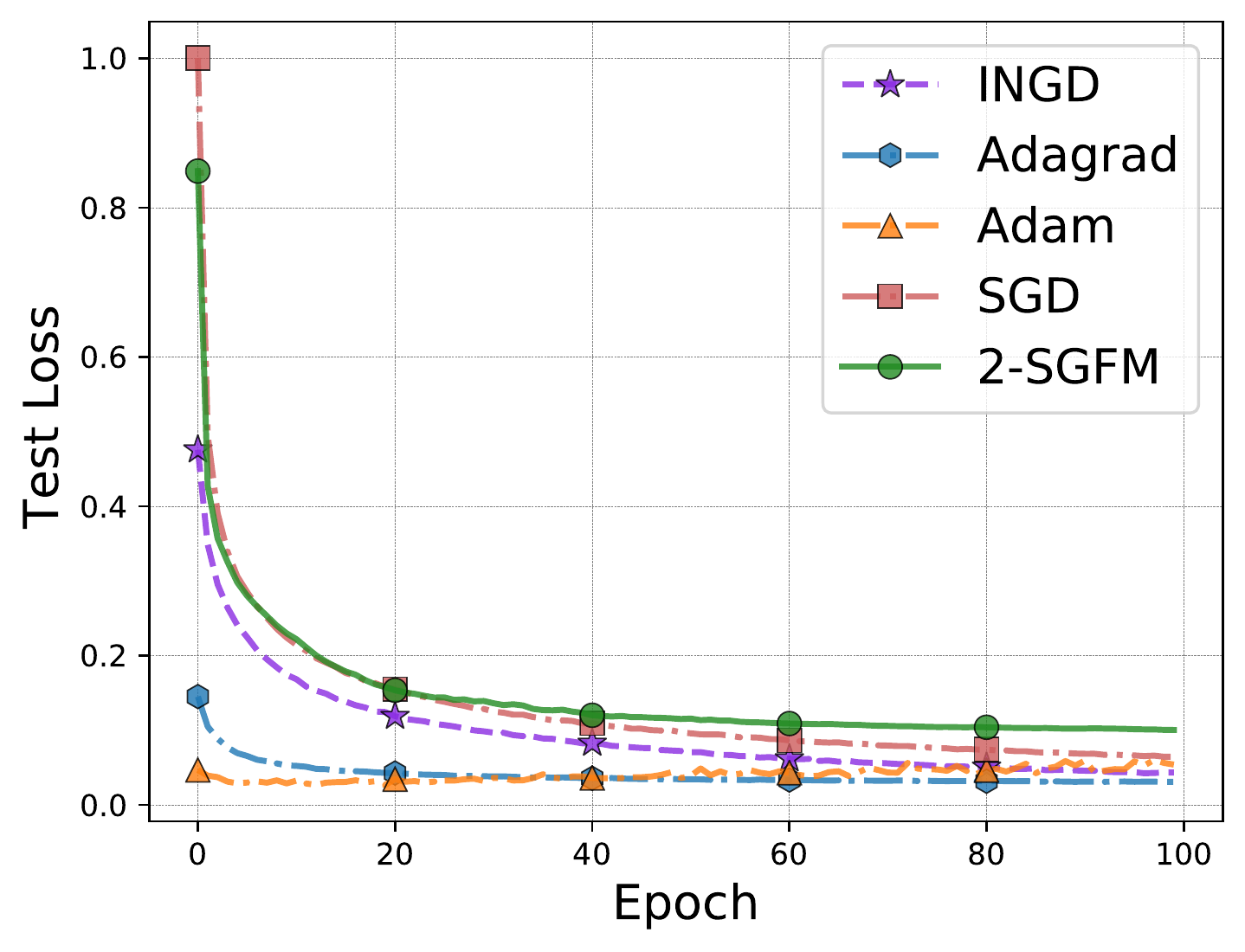}
\caption{\scriptsize{Test Loss}}
\end{subfigure}
\caption{\footnotesize{Performance of different methods on training CNNs with the \textsc{Mnist} dataset.}}\label{fig:main}\vspace*{-1em}
\end{figure*}
\begin{figure*}[!t]
\centering
\begin{subfigure}[t]{.24\linewidth}
\centering
\includegraphics[width=.99\linewidth]{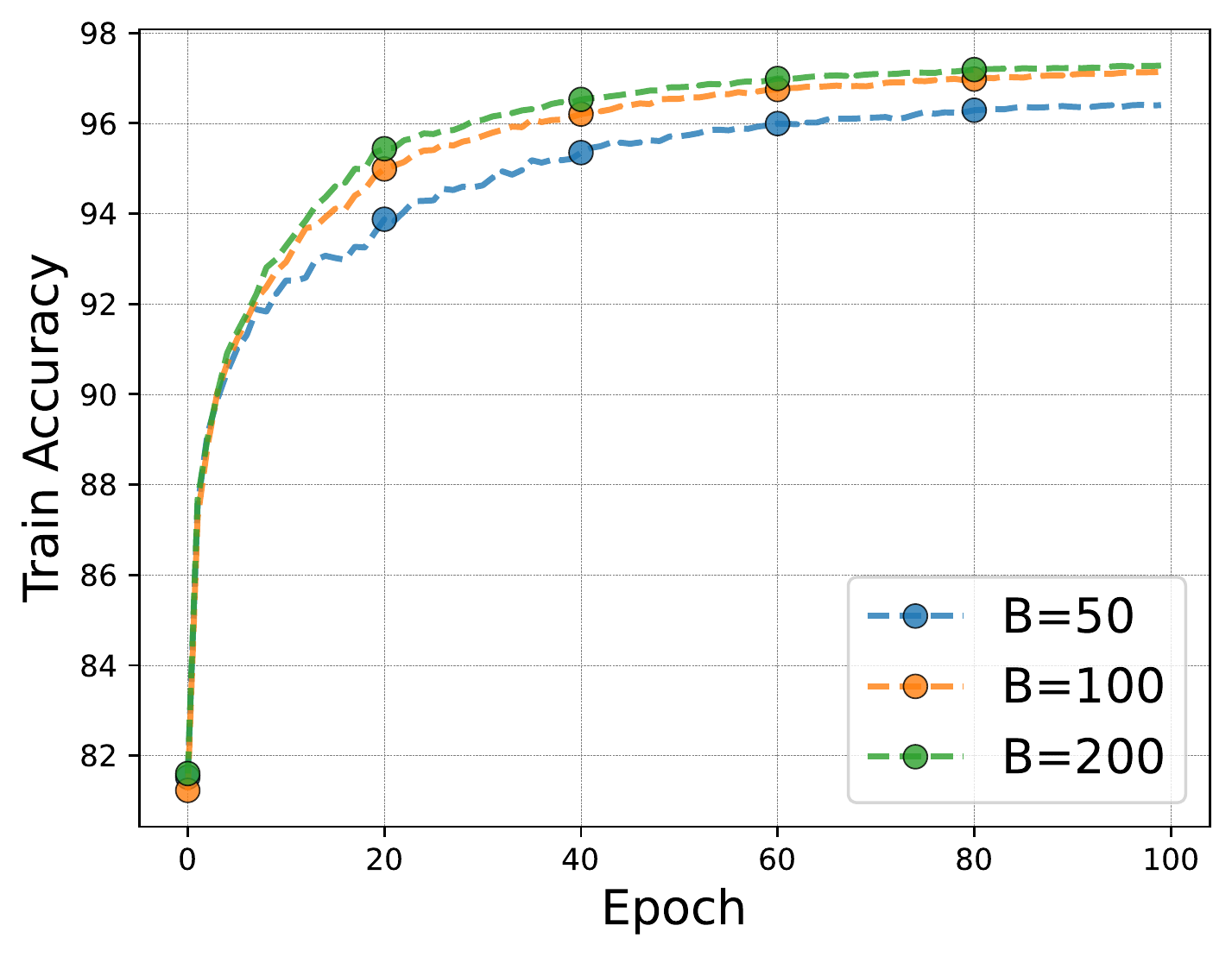}
\caption{\scriptsize{Effect of $B$ (Train)}}
\label{fig:subfig-ablation-1}
\end{subfigure}
\begin{subfigure}[t]{.24\linewidth}
\centering
\includegraphics[width=.99\linewidth]{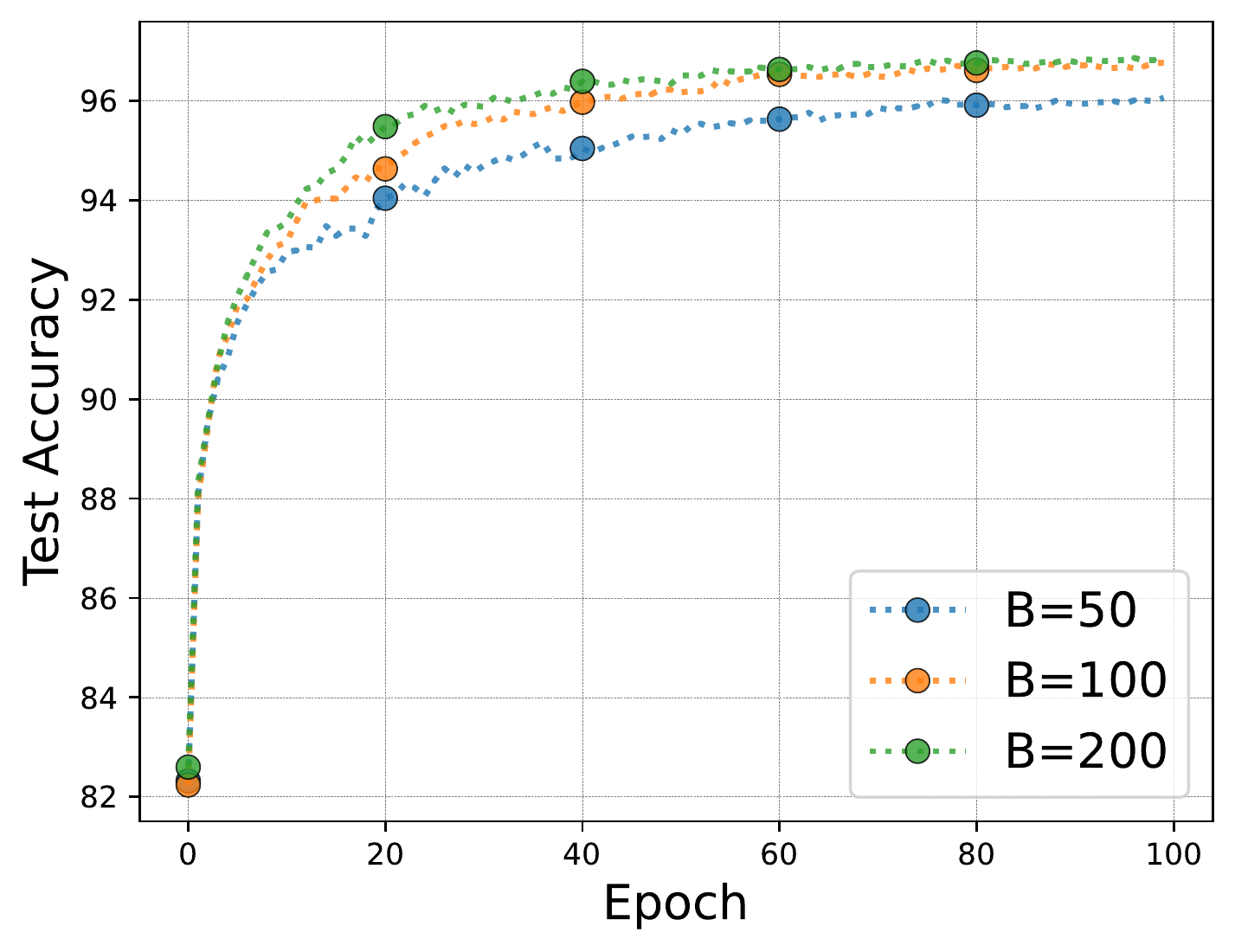}
\caption{\scriptsize{Effect of $B$ (Test)}}
\label{fig:subfig-ablation-2}
\end{subfigure}
\begin{subfigure}[t]{.24\linewidth}
\centering
\includegraphics[width=.99\linewidth]{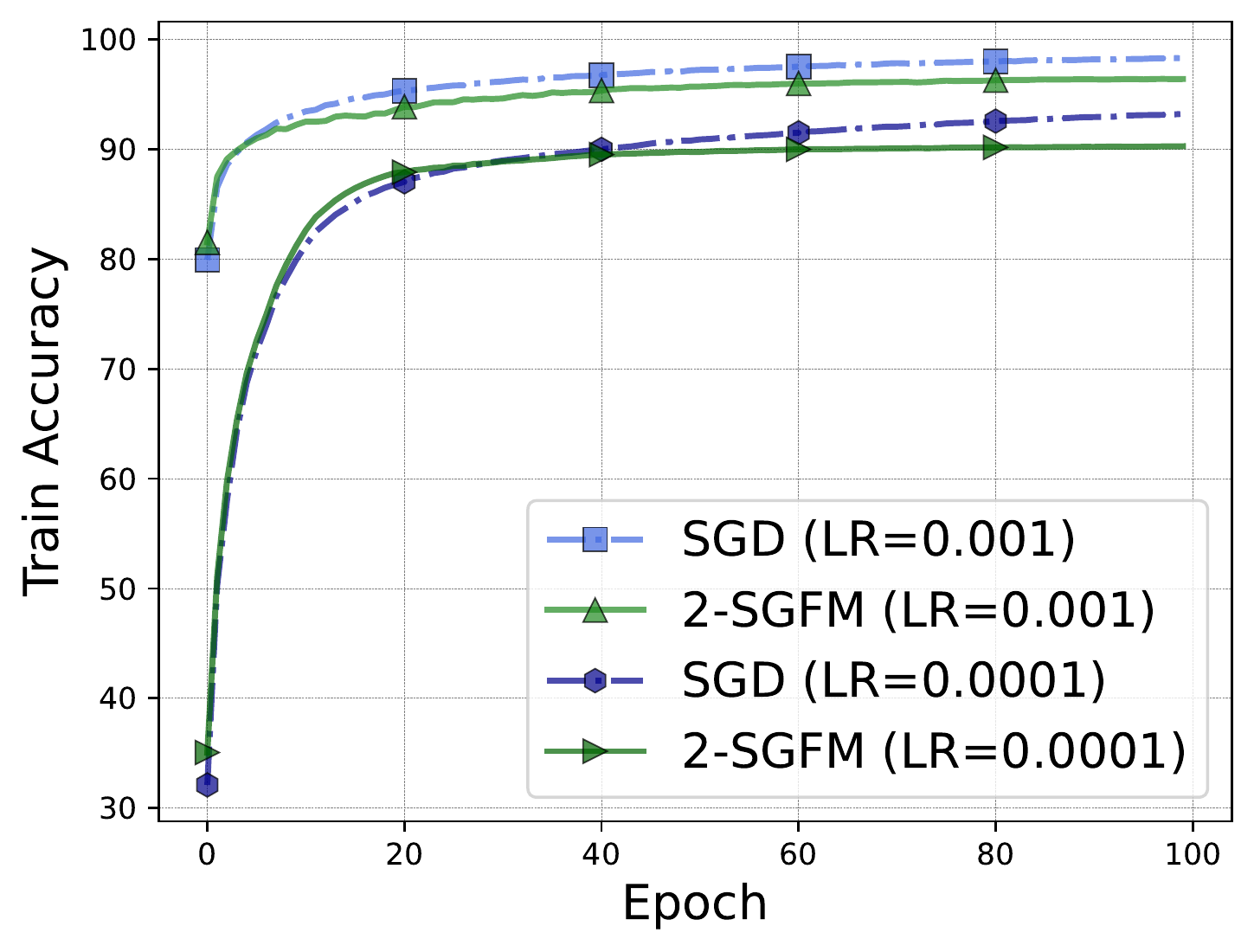}
\caption{\scriptsize{Compare with SGD (Train)}}
\label{fig:subfig-ablation-3}
\end{subfigure}
\begin{subfigure}[t]{.24\linewidth}
\centering
\includegraphics[width=.99\linewidth]{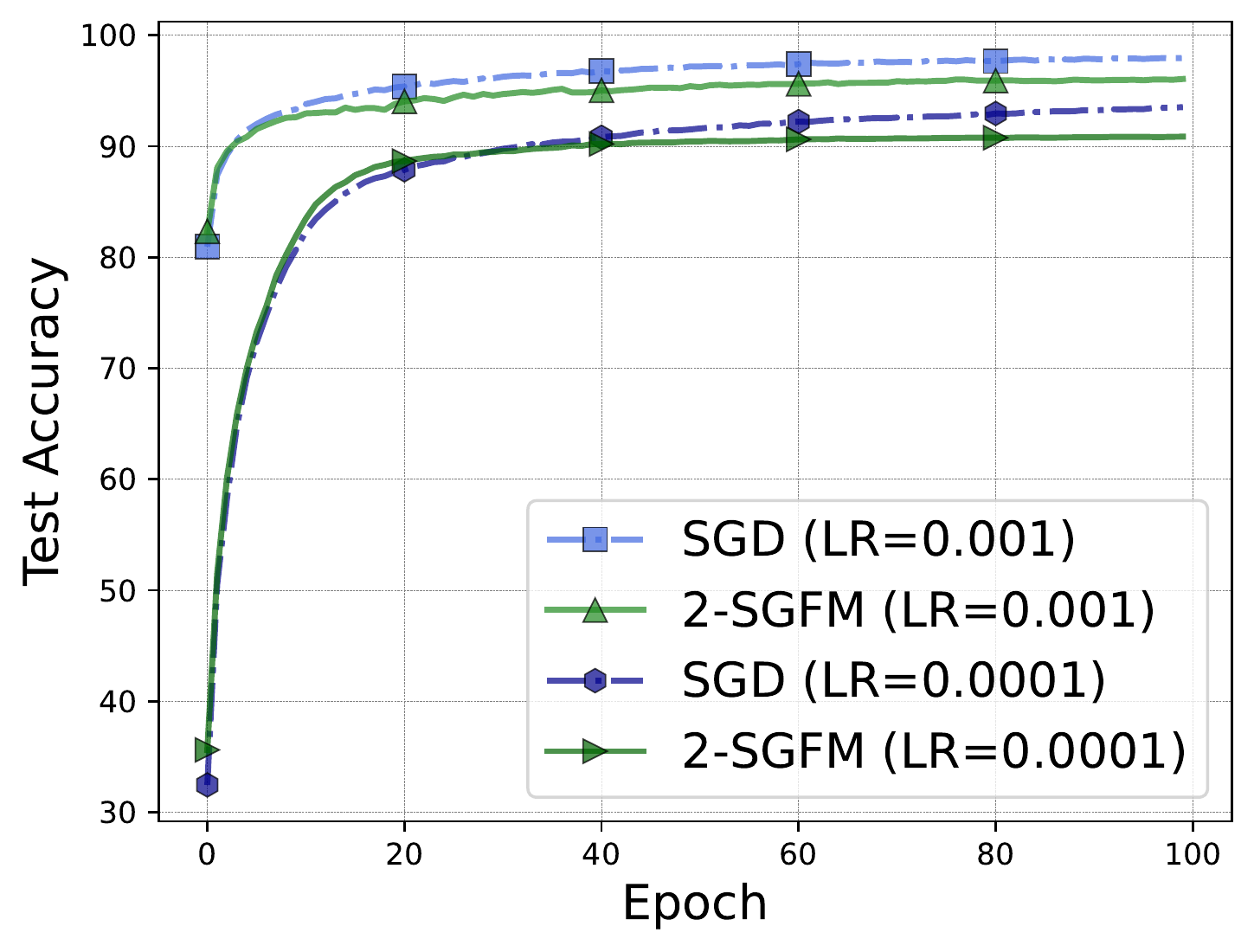}
\caption{\scriptsize{Compare with SGD (Test )}}
\label{fig:subfig-ablation-4}
\end{subfigure}
\caption{\footnotesize{(\textbf{a}-\textbf{b}) Performance of 2-SGFM with different choices of $B$. (\textbf{c}-\textbf{d}) Performance of 2-SGFM and SGD with different choices of learning rates. }}\label{fig:ablation}\vspace*{-1em}
\end{figure*}

The baseline approaches include three gradient-based methods: stochastic gradient descent (SGD), \textsc{Adagrad}~\citep{Duchi-2011-Adaptive} and \textsc{Adam}~\citep{Kingma-2014-Adam}. We compare these methods with 2-SGFM (cf. Algorithm~\ref{alg:2-SGFM}) and set the learning rate $\eta$ as $0.001$. All the experiments are implemented using PyTorch~\citep{Paszke-2019-Pytorch} on a workstation with a 2.6 GHz Intel Core i7 and 16GB memory. 

Figure~\ref{fig:main} summarizes the numerical results on the performance of SGD, \textsc{Adagrad}, Adagrad, \textsc{Adam}, \textsc{INDG}~\citep{Zhang-2020-Complexity}, and our method 2-SGFM with $\delta=0.1$ and $B=200$. Notably, 2-SGFM is comparable to other gradient-based methods in terms of training/test accuracy/loss even though it only use the function values. This demonstrates the potential value of our methods since the gradient-based methods are not applicable in many real-world application problems as mentioned before. Figure~\ref{fig:subfig-ablation-1} and~\ref{fig:subfig-ablation-2} presents the effect of batch size $B \geq 1$ in 2-SGFM; indeed, the larger value of $B$ leads to better performance and this accords with Theorem~\ref{Thm:SGFM-rate-deviation}. We also compare the performance of SGD and 2-SGFM with different choices of $\eta$. From Figure~\ref{fig:subfig-ablation-3} and \ref{fig:subfig-ablation-4}, we see that SGD and 2-SGFM achieve similar performance in the early stage and converge to solutions with similar quality.  

Figure~\ref{fig:ablation_appendix_batch} summarizes the experimental results on the effect of batch size $B$ for 2-SGFM. Note that the evaluation metrics here are train loss and test loss. It is clear that the larger value of $B$ leads to better performance and this is consistent with the results presented in the main context. Figure~\ref{fig:ablation_appendix_lr} summarizes the experimental results on the effect of learning rates for 2-SGFM. It is interesting to see that 2-SGFM can indeed benefit from a more aggressive choice of stepsize $\eta > 0$ in practice and the choice of $\eta = 0.0001$ seems to be too conservative. 
\begin{figure*}[!t]
\centering
\begin{subfigure}[t]{.4\linewidth}
\centering
\includegraphics[width=.9\linewidth]{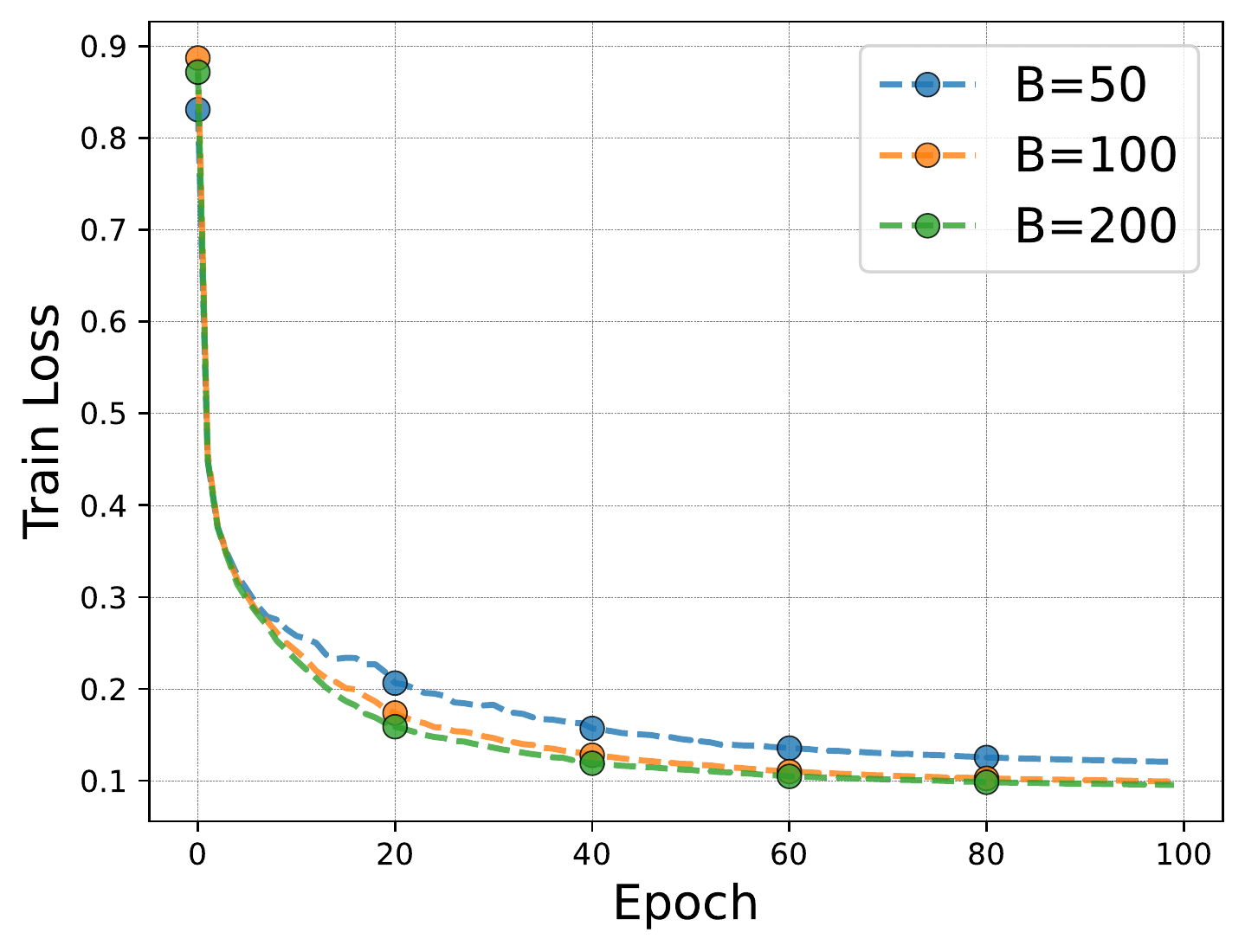}
\caption{\scriptsize{Effect of $B$ (Train Loss)}}
\end{subfigure}
\begin{subfigure}[t]{.4\linewidth}
\centering
\includegraphics[width=.9\linewidth]{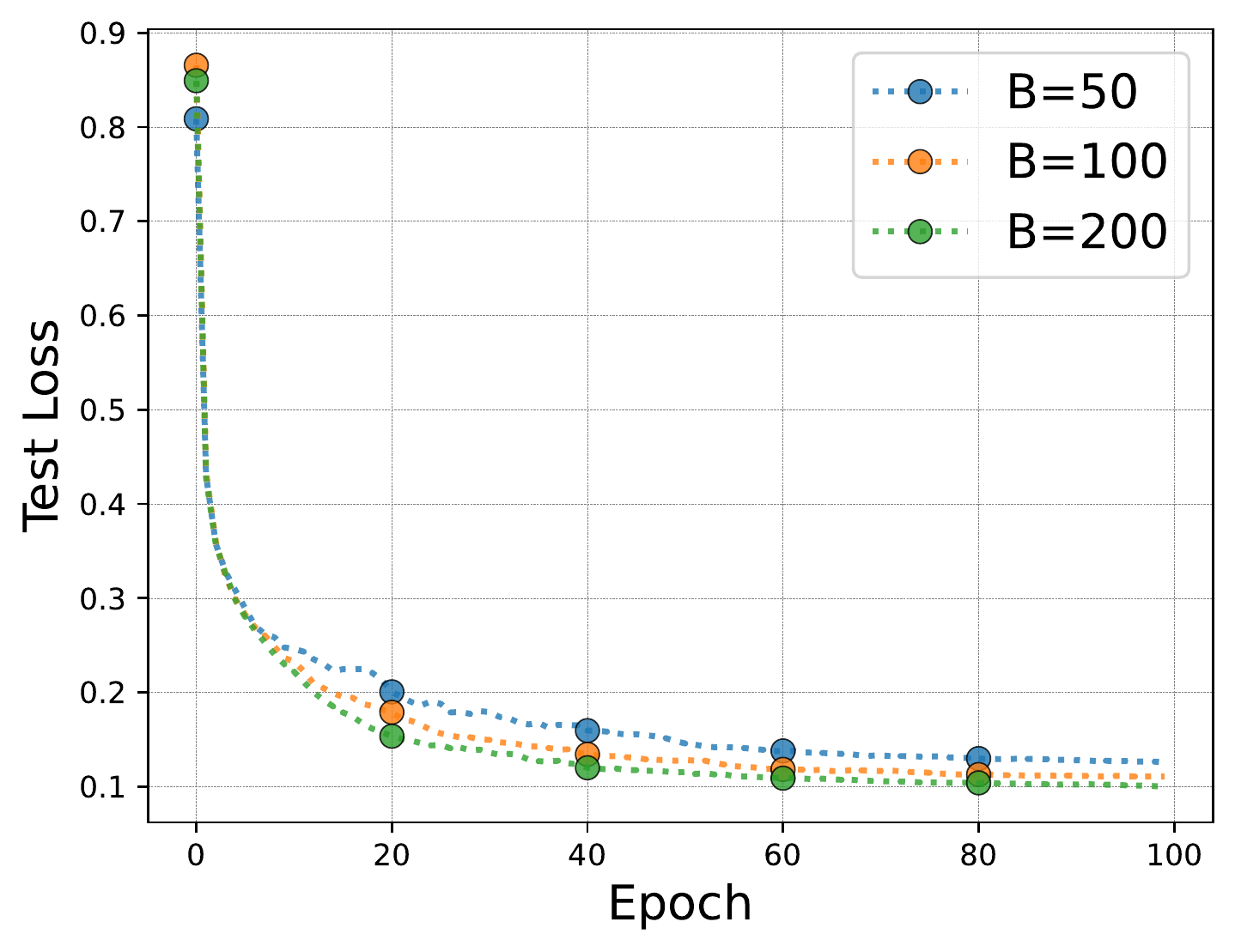}
\caption{\scriptsize{Effect of $B$ (Test Loss)}}
\end{subfigure}
\caption{\footnotesize{Performance of 2-SGFM with different choices of $B$.}}\label{fig:ablation_appendix_batch}\vspace*{-1em}
\end{figure*}
\begin{figure*}[!t]
\centering
\begin{subfigure}[t]{.24\linewidth}
\centering
\includegraphics[width=.99\linewidth]{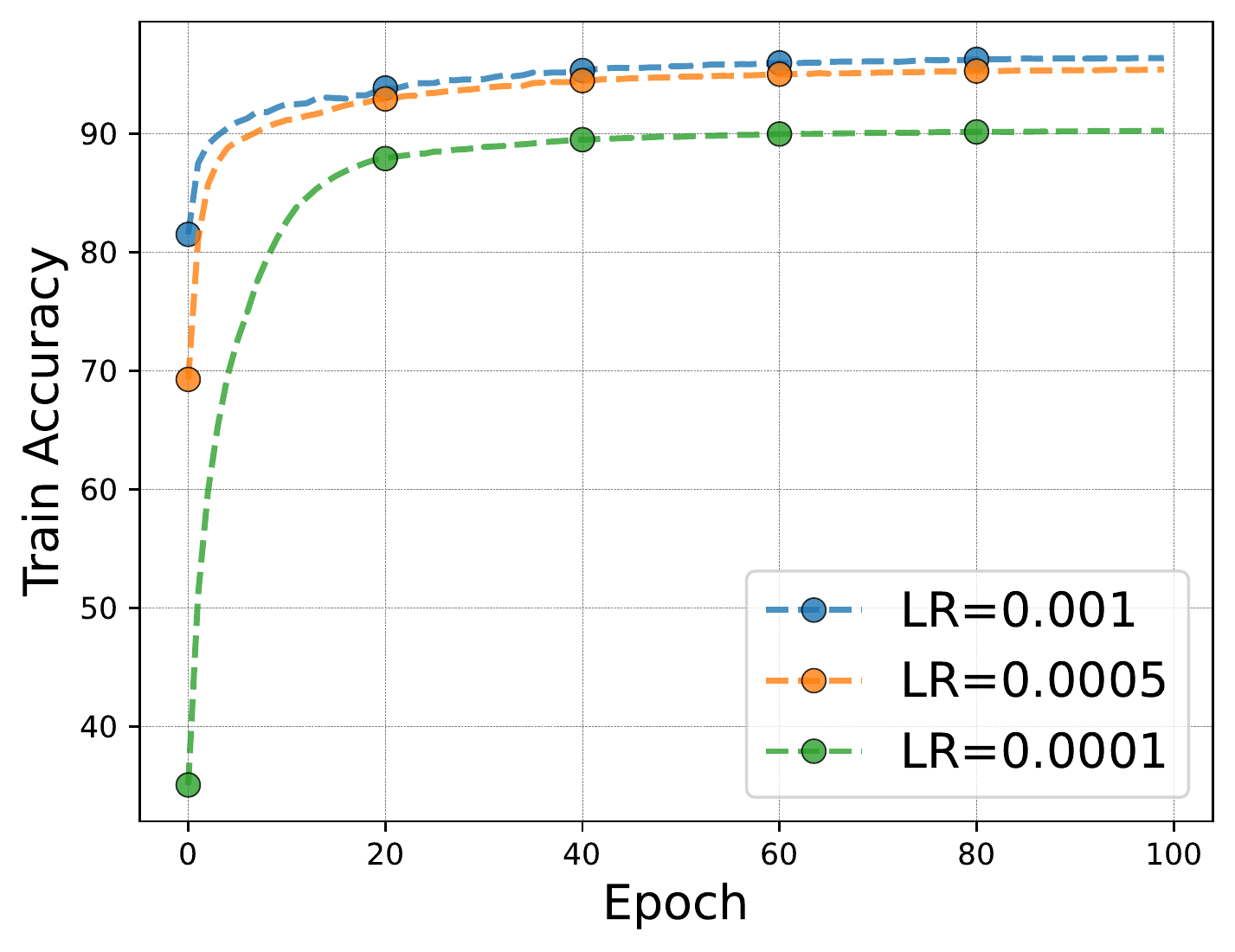}
\caption{\scriptsize{Effect of $\eta$ (Train Acc)}}
\end{subfigure}
\begin{subfigure}[t]{.24\linewidth}
\centering
\includegraphics[width=.99\linewidth]{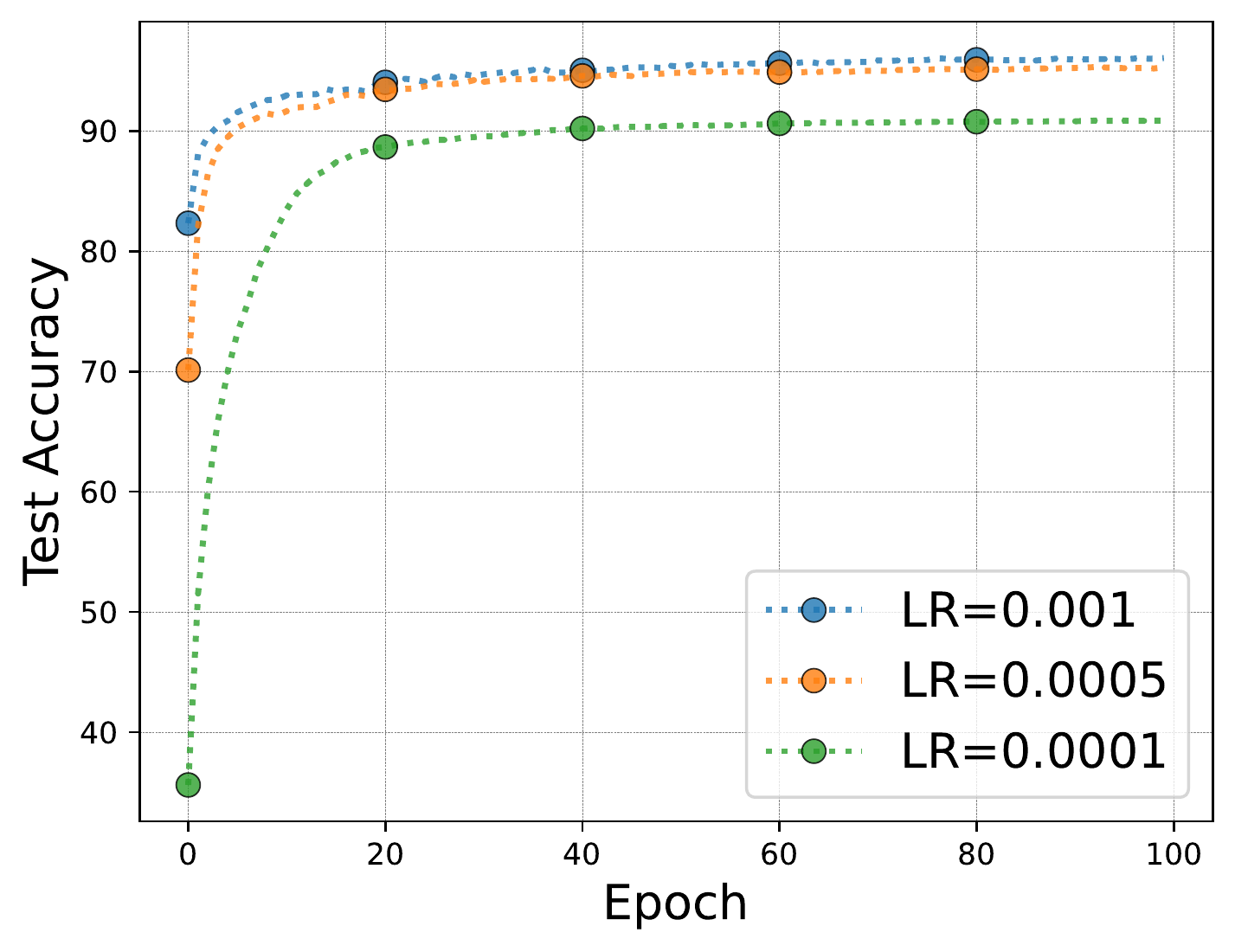}
\caption{\scriptsize{Effect of $\eta$ (Test Acc)}}
\end{subfigure}
\begin{subfigure}[t]{.24\linewidth}
\centering
\includegraphics[width=.99\linewidth]{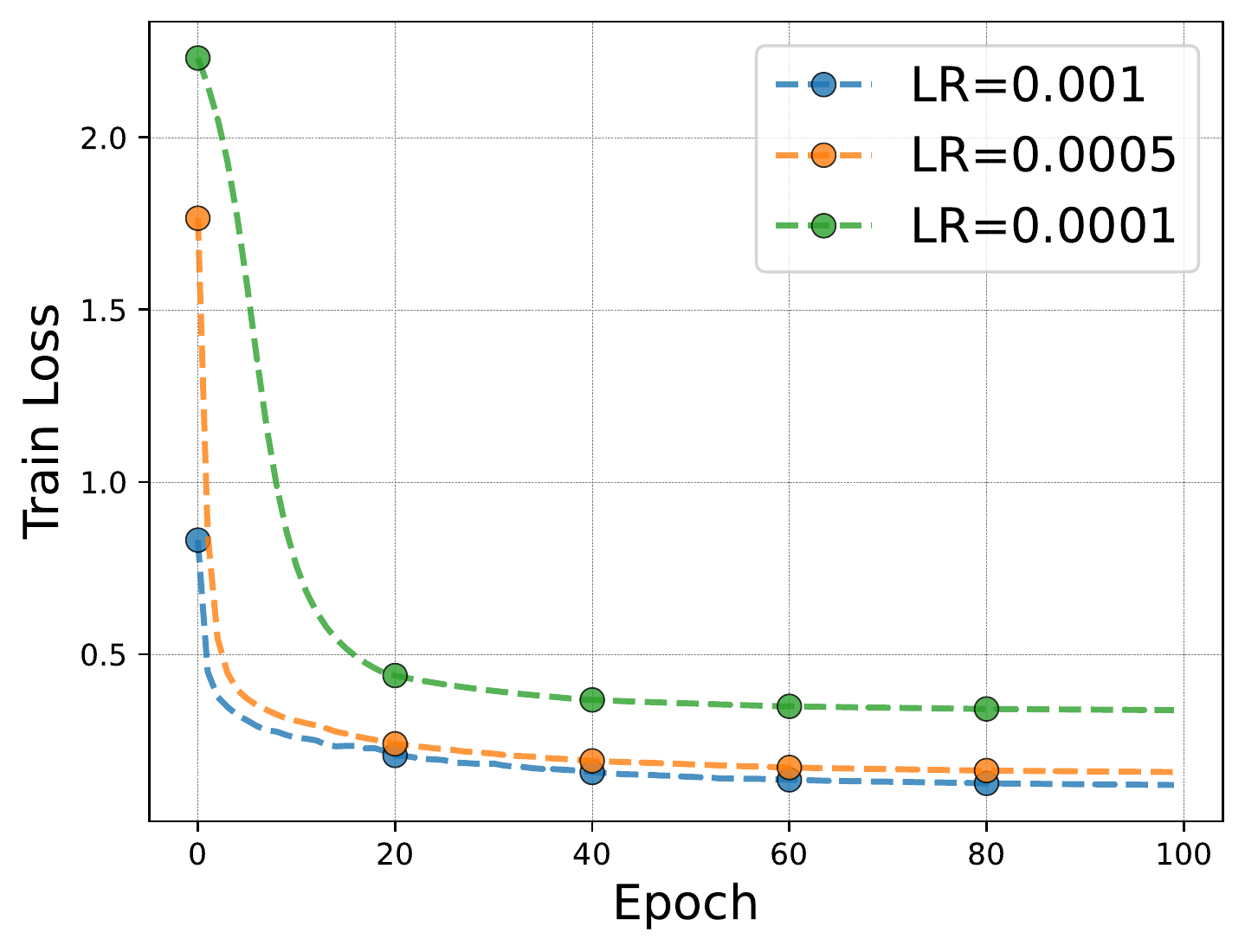}
\caption{\scriptsize{Effect of $\eta$ (Train Loss)}}
\end{subfigure}
\begin{subfigure}[t]{.24\linewidth}
\centering
\includegraphics[width=.99\linewidth]{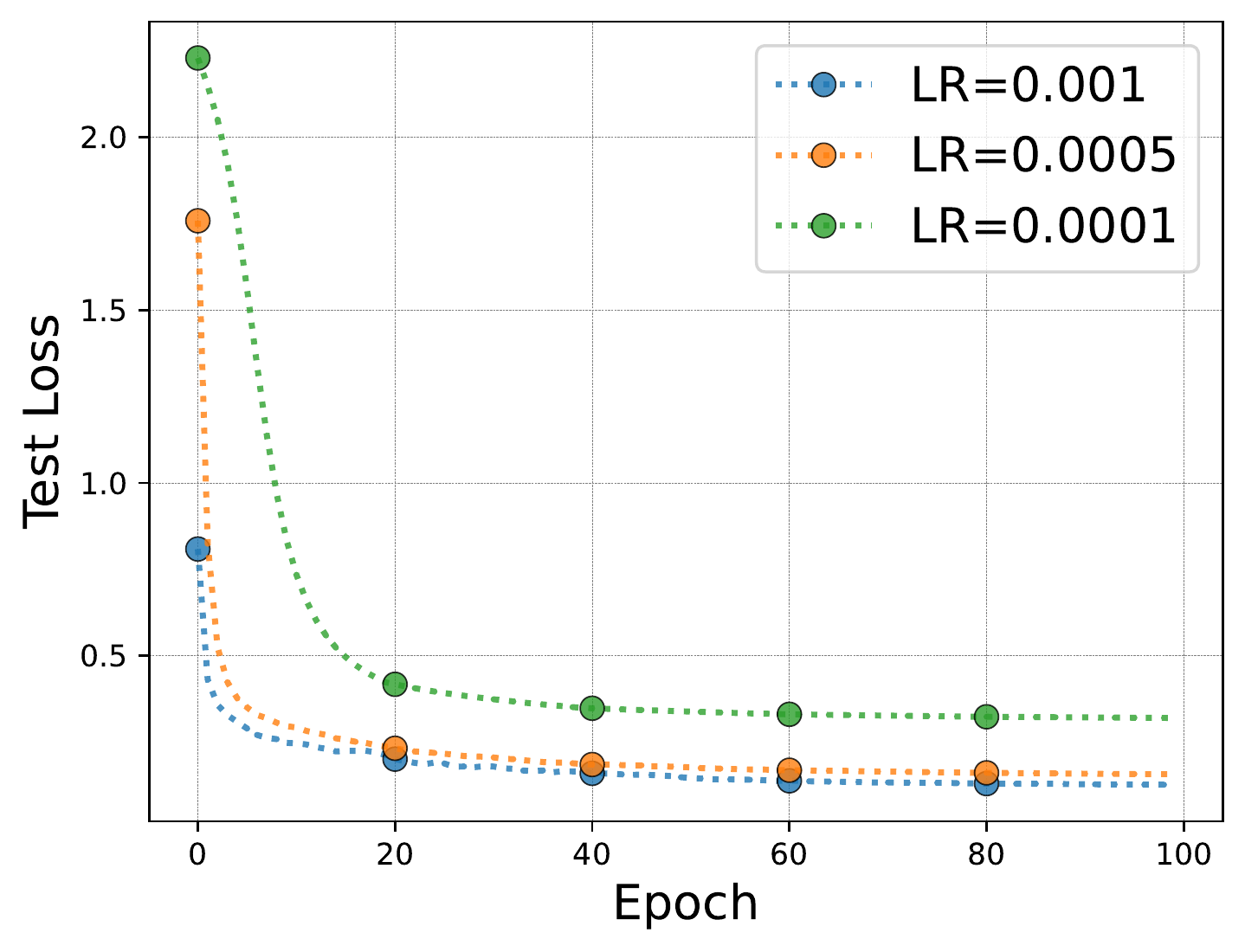}
\caption{\scriptsize{Effect of $\eta$ (Test Loss)}}
\end{subfigure}
\caption{\footnotesize{Performance of 2-SGFM with different choices of learning rates.}}\label{fig:ablation_appendix_lr}\vspace*{-1.5em}
\end{figure*}

\section{Conclusion}\label{sec:conclu}
We proposed and analyzed a class of deterministic and stochastic gradient-free methods for optimizing a Lipschitz function. Based on the relationship between the Goldstein subdifferential and uniform smoothing that we have established, the proposed GFM and SGFM are proved to return a $(\delta, \epsilon)$-Goldstein stationary point at an expected rate of $O(d^{3/2}\delta^{-1}\epsilon^{-4})$. We obtain a large-deviation guarantee and improve it by combining GFM and SGFM with a two-phase scheme. Experiments on training neural networks with the \textsc{MNIST} and \textsc{CIFAR10} datasets demonstrate the effectiveness of our methods. Future directions include the theory for non-Lipschitz and nonconvex optimization~\citep{Bian-2015-Complexity} and applications of our methods to deep residual neural network (ResNet)~\citep{He-2016-Deep} and deep dense convolutional network (DenseNet)~\citep{Huang-2017-Densely}.

\section*{Acknowledgments}
This work was supported in part by the Mathematical Data Science program of the Office of Naval Research under grant number N00014-18-1-2764 and by the Vannevar Bush Faculty Fellowship program
under grant number N00014-21-1-2941.

\bibliographystyle{plainnat}
\bibliography{ref}

\appendix
\section{Further Related Work on Nonsmooth Nonconvex Optimization}\label{app:nnopt}
To appreciate the difficulty and the broad scope of the research agenda in nonsmooth nonconvex optimization, we start by describing the existing relevant literature. First, the existing work is mostly devoted to establishing the asymptotic convergence properties of various optimization algorithms, including gradient sampling (GS) methods~\citep{Burke-2002-Approximating, Burke-2002-Two, Burke-2005-Robust, Kiwiel-2007-Convergence, Burke-2020-Gradient}, bundle methods~\citep{Kiwiel-1996-Restricted, Fuduli-2004-Minimizing} and subgradient methods~\citep{Benaim-2005-Stochastic, Majewski-2018-Analysis, Davis-2020-Stochastic, Daniilidis-2020-Pathological, Bolte-2021-Conservative}. More specifically,~\citet{Burke-2002-Approximating} provided a systematic investigation of approximating the Clarke subdifferential through random sampling and proposed a gradient bundle method~\citep{Burke-2002-Two}---the precursor of GS methods---for optimizing a nonconvex, nonsmooth and non-Lipschitz function. Later,~\citet{Burke-2005-Robust} and~\citet{Kiwiel-2007-Convergence} proposed the GS methods by incorporating key modifications into the algorithmic scheme in~\citet{Burke-2002-Two} and proved that every cluster point of the iterates generated by GS methods is a Clarke stationary point. For an overview of GS methods, we refer to~\citet{Burke-2020-Gradient}. Another line of works extended the bundle methods to nonsmooth nonconvex optimization by considering either piece-wise linear models embedding possible downward shifting~\citep{Kiwiel-1996-Restricted} or a mixture of linear pieces that exhibit a convex or concave behavior~\citep{Fuduli-2004-Minimizing}. There has been recent progress on analyzing subgradient methods for nonsmooth nonconvex optimization; indeed, the classical subgradient method on Lipschitz functions may fail to asymptotically find any stationary point due to the pathological examples~\citep{Daniilidis-2020-Pathological}. Under some additional regularity conditions,~\citet{Benaim-2005-Stochastic} proved the asymptotic convergence of stochastic approximation methods from a continuous-time viewpoint and~\citet{Majewski-2018-Analysis} generalized these results with proximal and implicit updates.~\citet{Bolte-2021-Conservative} justify the automatic differentiation schemes under the nonsmoothness conditions;~\citet{Davis-2020-Stochastic} proved the asymptotic convergence of classical subgradient methods for a class of Whitney stratifiable functions which include the functions studied in~\citet{Majewski-2018-Analysis}. Recently,~\citet{Zhang-2020-Complexity} modified Goldstein's subgradient method~\citep{Goldstein-1977-Optimization} to optimize a class of Hadamard directionally differentiable function and proved nonasymptotic convergence guarantee.~\citet{Davis-2022-Gradient} relaxed the assumption of Hadamard directionally differentiability and showed that another modification of Goldstein's subgradient method could achieve the same finite-time guarantee for any Lipschitz function. Concurrently,~\citet{Tian-2022-Finite} removed the subgradient selection oracle assumption in~\citet[Assumption~1]{Zhang-2020-Complexity} and provided the third modification of Goldstein's subgradient method with the same finite-time convergence. Different from these gradient-based methods, we focus on the gradient-free methods in this paper. 

We are also aware of many recent works on the algorithmic design in the structured nonsmooth nonconvex optimization. There are two primary settings where the proximal gradient methods is guaranteed to achieve nonasymptotic convergence if the proximal mapping can be efficiently evaluated.  The former one considers the objective function with composition structure~\citep{Duchi-2018-Stochastic, Drusvyatskiy-2019-Efficiency, Davis-2019-Stochastic}, while the latter one focuses on composite objective functions with nonsmooth convex component~\citep{Bolte-2018-First, Beck-2020-Convergence}. However, both of these settings require the weak convexity of objective function and exclude many simple and important nonsmooth nonconvex functions used in the real-world application problems. 

\section{Proof of Proposition~\ref{Prop:uniform-smoothing}}\label{app:uniform-smoothing}
Throughout this subsection, we let $\su \in \br^d$ denote a random variable distributed uniformly on $\BB_1(\zero)$. For the first statement, since $f$ is $L$-Lipschitz, we have
\begin{equation*}
|f_\delta(\x) - f(\x)| = |\EE[f(\x + \delta\su) - f(\x)]| \leq \delta L \cdot \EE[\|\su\|] \leq \delta L. 
\end{equation*}
Then, we proceed to prove the second statement. Indeed,~\citet[Proposition~2.4]{Bertsekas-1973-Stochastic} guarantees that $f_\delta$ is everywhere differentiable. Since $f$ is $L$-Lipschitz, we have
\begin{equation*}
|f_\delta(\x) - f_\delta(\x')| = |\EE[f(\x + \delta\su) - f(\x' + \delta\su)]| \leq L|\EE[\|\x - \x'\|]| = L\|\x - \x'\|, \quad \textnormal{for all } \x, \x' \in \br^d. 
\end{equation*}
It remains to prove that $\nabla f_\delta$ is Lipschitz. Since $f$ is $L$-Lipschitz, the Rademacher's theorem guarantees that $f$ is almost everywhere differentiable. This implies that $\nabla f_\delta(\x) = \EE[\nabla f(\x + \delta\su)]$. Then, we have
\begin{eqnarray*}
\lefteqn{\|\nabla f_\delta(\x) - \nabla f_\delta(\x')\| = \|\EE[\nabla f(\x + \delta\su)] - \EE[\nabla f(\x' + \delta\su)]\|} \\
& = & \tfrac{1}{\textnormal{Vol}(\BB_1(\zero))}\left|\int_{\su \in \BB_1(\zero)} \nabla f(\x + \delta\su)\; d\su - \int_{\su \in \BB_1(\zero)} \nabla f(\x' + \delta\su)\; d\su\right| \\
& = & \tfrac{1}{\textnormal{Vol}(\BB_\delta(\zero))}\left|\int_{\y \in \BB_\delta(\x)} \nabla f(\y)\; d\y - \int_{\y \in \BB_\delta(\x')} \nabla f(\y)\; d\y\right|. 
\end{eqnarray*}
Note that $f$ is $L$-Lipschitz, we have $\|\nabla f(\y)\| \leq L$ for any $\y \in \BB_\delta(\x) \cup \BB_\delta(\x')$. Then, we turn to prove that $\|\nabla f_\delta(\x) - \nabla f_\delta(\x')\| \leq \frac{L\sqrt{d}\|\x - \x'\|}{\delta}$ for two different cases one by one as follows. 
\paragraph{Case I: $\|\x - \x'\| \geq 2\delta$.} It is clear that 
\begin{equation*}
\|\nabla f_\delta(\x) - \nabla f_\delta(\x')\| \leq 2L \leq \tfrac{L\|\x - \x'\|}{\delta} \overset{d \geq 1}{\leq} \tfrac{L\sqrt{d}\|\x - \x'\|}{\delta}, 
\end{equation*}
which implies the desired result. 
\paragraph{Case II: $\|\x - \x'\| \leq 2\delta$.} It is clear that $\BB_\delta(\x) \cap \BB_\delta(\x') \neq \emptyset$. This implies that 
\begin{equation*}
\|\nabla f_\delta(\x) - \nabla f_\delta(\x')\| = \tfrac{1}{\textnormal{Vol}(\BB_\delta(\zero))}\left|\int_{\y \in \BB_\delta(\x) \setminus \BB_\delta(\x')} \nabla f(\y)\; d\y - \int_{\y \in \BB_\delta(\x') \setminus \BB_\delta(\x)} \nabla f(\y)\; d\y\right|.  
\end{equation*}
Since $\|\nabla f(\y)\| \leq L$ for any $\y \in \BB_\delta(\x) \cup \BB_\delta(\x')$, we have
\begin{equation*}
\|\nabla f_\delta(\x) - \nabla f_\delta(\x')\| \leq \tfrac{L}{\textnormal{Vol}(\BB_\delta(\zero))}\left(\textnormal{Vol}(\BB_\delta(\x) \setminus \BB_\delta(\x')) + \textnormal{Vol}(\BB_\delta(\x') \setminus \BB_\delta(\x))\right). 
\end{equation*}
By the symmetry from a geometrical point of view, we have $\textnormal{Vol}(\BB_\delta(\x) \setminus \BB_\delta(\x')) = \textnormal{Vol}(\BB_\delta(\x') \setminus \BB_\delta(\x))$. For simplicity, we let $I = \BB_\delta(\x) \setminus \BB_\delta(\x')$ and obtain that 
\begin{equation*}
\|\nabla f_\delta(\x) - \nabla f_\delta(\x')\| \leq \tfrac{2L}{\textnormal{Vol}(\BB_\delta(\zero))}\textnormal{Vol}(I) = \tfrac{2L}{c_d \delta^d}\textnormal{Vol}(I), \quad \textnormal{where } c_d = \tfrac{\pi^{d/2}}{\Gamma(d/2+1)}.  
\end{equation*}
It suffices to find an upper bound for $\textnormal{Vol}(I)$ in terms of $\|\x - \x'\|$. Let $V_{cap}(p)$ denote the volume of the spherical cap with the distance $p$ from the center of the sphere, we have
\begin{equation*}
\textnormal{Vol}(I) = \textnormal{Vol}(\BB_\delta(\zero)) - 2V_{cap}(\tfrac{1}{2}\|\x - \x'\|) = c_d \delta^d - 2V_{cap}(\tfrac{1}{2}\|\x - \x'\|). 
\end{equation*}
The volume of the $d$-dimensional spherical cap with distance $p$ from the center of the sphere can be calculated in terms of the volumes of $(d-1)$-dimensional spheres as follows:
\begin{equation*}
V_{cap}(p) = \int_p^\delta c_{d-1}(\delta^2 - \rho^2)^{\frac{d-1}{2}} d\rho, \quad \textnormal{for all } p \in [0, \delta]. 
\end{equation*}
Since $V_{cap}(\cdot)$ is a convex function over $[0, \delta]$, we have $V_{cap}(p) \geq V_{cap}(0) + V'_{cap}(0) p$. By the definition, we have $V_{cap}(0) = \frac{1}{2}\textnormal{Vol}(\BB_\delta(\zero)) = \frac{1}{2}c_d \delta^d$ and $V'_{cap}(0) = -c_{d-1}\delta^{d-1}$. Thus, $V_{cap}(p) \geq \frac{1}{2}c_d \delta^d - c_{d-1}\delta^{d-1} p$.  Furthermore, $\frac{1}{2}\|\x - \x'\| \in [0, \delta]$. Putting these pieces together yields that $\textnormal{Vol}(I) \leq c_{d-1}\delta^{d-1}\|\x - \x'\|$. Therefore, we conclude that 
\begin{equation*}
\|\nabla f_\delta(\x) - \nabla f_\delta(\x')\| \leq \tfrac{2L}{c_d \delta^d}\textnormal{Vol}(I) \leq \tfrac{2c_{d-1}}{c_d}\tfrac{L\|\x - \x'\|}{\delta}. 
\end{equation*}
Since $c_d = \frac{\pi^{d/2}}{\Gamma(d/2+1)}$, we have $\frac{2c_{d-1}}{c_d} = \left\{
\begin{array}{ll}
\frac{d!!}{(d-1)!!} & \textnormal{if } d \textnormal{ is odd}, \\
\frac{2}{\pi}\frac{d!!}{(d-1)!!} & \textnormal{otherwise}.
\end{array}
\right. $ and $\frac{1}{\sqrt{d}}\frac{2c_{d-1}}{c_d} \rightarrow \sqrt{\frac{\pi}{2}}$. Therefore, we conclude that the gradient $\nabla f_\delta$ is $\frac{cL\sqrt{d}}{\delta}$-Lipschitz where $c > 0$ is a positive constant. In addition, for the construction of a function $f$ in which each of the above bounds are tight, we consider a convex combination of ``difficult" functions, in this case 
\begin{equation*}
f_1(\x) = L\|\x\|, \quad f_2(\x) = L|\langle\x, \tfrac{\w}{\|\w\|}\rangle - \tfrac{1}{2}|. 
\end{equation*}
and choose $f(\x) = \frac{1}{2}(f_1(\x) + f_2(\x))$. Following up the same argument as in~\citet[Lemma~10]{Duchi-2012-Randomized}, it is relatively straightforward to verify that the bounds in Proposition~\ref{Prop:uniform-smoothing} cannot be improved by more than a constant factor. This completes the proof. 

\section{Proof of Theorem~\ref{Thm:Goldstein-smoothing}}\label{app:Goldstein-smoothing}
We first show that $\nabla f_\delta(\x) = \EE_{\su \sim \PP}[\nabla f(\x + \delta \su)]$. Indeed, by the definition of $f_\delta$, we have
\begin{equation*}
f_\delta(\x) = \EE_{\su \sim \PP}[f(\x + \delta \su)] = \tfrac{1}{\textnormal{Vol}(\BB_1(\zero))} \int_{\su \in \BB_1(\zero)} f(\x + \delta\su)\; d\su = \tfrac{1}{\textnormal{Vol}(\BB_\delta(\zero))} \int_{\sv \in \BB_\delta(\zero)} f(\x + \sv)\; d\sv. 
\end{equation*}
Since $f$ is $L$-Lipschitz,~\citet[Proposition~2.3]{Bertsekas-1973-Stochastic} guarantees that $f_\delta$ is everywhere differentiable. Thus, we have $\nabla f_\delta(\x)$ exists for any $\x \in \br^d$ and satisfies that 
\begin{equation}\label{inequality:Goldstein-smoothing-first}
\lim_{\|\h\| \rightarrow 0} \tfrac{|f_\delta(\x+\h) - f_\delta(\x) - \langle \nabla f_\delta(\x), \h\rangle|}{\|\h\|} = 0. 
\end{equation}
Further, we have
\begin{equation*}
\tfrac{f_\delta(\x+\h) - f_\delta(\x)}{\|\h\|} = \tfrac{1}{\textnormal{Vol}(\BB_\delta(\zero))} \int_{\sv \in \BB_\delta(\zero)} \tfrac{f(\x + \h + \sv) - f(\x + \sv)}{\|\h\|} \; d\sv
\end{equation*}
Since $f$ is $L$-Lipschitz, we have $\frac{f(\x + \h + \sv) - f(\x + \sv)}{\|\h\|} \leq L$. By the dominated convergence theorem, we have
\begin{equation*}
\lim_{\|\h\| \rightarrow 0} \tfrac{f_\delta(\x+\h) - f_\delta(\x)}{\|\h\|} = \tfrac{1}{\textnormal{Vol}(\BB_\delta(\zero))} \int_{\sv \in \BB_\delta(\zero)} \left(\lim_{\|\h\| \rightarrow 0} \tfrac{f(\x + \h + \sv) - f(\x + \sv)}{\|\h\|}\right) \; d\sv
\end{equation*}
Furthermore, Rademacher's theorem guarantees that $f$ is almost everywhere differentiable. Letting $U \subseteq \BB_\delta(\zero)$ such that $\textnormal{Vol}(U) = \textnormal{Vol}(\BB_\delta(\zero))$ and $f$ is differentiable at $\x + \sv$ for $\forall \sv \in U$, we have 
\begin{equation}\label{inequality:Goldstein-smoothing-second}
\lim_{\|\h\| \rightarrow 0} \tfrac{f_\delta(\x+\h) - f_\delta(\x)}{\|\h\|} = \tfrac{1}{\textnormal{Vol}(U)} \int_{\sv \in U} \left(\lim_{\|\h\| \rightarrow 0} \tfrac{f(\x + \h + \sv) - f(\x + \sv)}{\|\h\|}\right) \; d\sv, 
\end{equation}
and 
\begin{equation}\label{inequality:Goldstein-smoothing-third}
\lim_{\|\h\| \rightarrow 0} \tfrac{|f(\x+\h+\sv) - f(\x+\sv) - \langle \nabla f(\x+\sv), \h\rangle|}{\|\h\|} = 0. 
\end{equation}
Combining Eq.~\eqref{inequality:Goldstein-smoothing-first}, Eq~\eqref{inequality:Goldstein-smoothing-second} and Eq.~\eqref{inequality:Goldstein-smoothing-third} together yields that 
\begin{equation*}
\lim_{\|\h\| \rightarrow 0} \tfrac{|\langle \nabla f_\delta(\x) - \EE_{\su \sim \PP}[\nabla f(\x+\delta\su)], \h\rangle|}{\|\h\|} = 0. 
\end{equation*}
Choosing $\h = t(\nabla f_\delta(\x) - \EE_{\su \sim \PP}[\nabla f(\x+\delta\su)])$ with $t \rightarrow 0$, we have $\|\nabla f_\delta(\x) - \EE_{\su \sim \PP}[\nabla f(\x+\delta\su)]\| = 0$.  

It remains to show that $\nabla f_\delta(\x) \in \partial_\delta f(\x)$ for any $\x \in \br^d$ using the proof argument by contradiction. In particular, we assume that there exists $\x_0 \in \br^d$ such that $\nabla f_\delta(\x_0) \notin \partial_\delta f(\x_0)$. Recall that 
\begin{equation*}
\partial_\delta f(\x_0) := \textnormal{conv}(\cup_{\y \in \BB_\delta(\x_0)}  \partial f(\y)), 
\end{equation*}
By the hyperplane separation theorem~\citep{Rockafellar-2009-Variational}, there exists a unit vector $\g \in \br^d$ such that $\langle\g, \nabla f_\delta(\x_0)\rangle > 0$ and 
\begin{equation}\label{inequality:Goldstein-smoothing-fourth}
\langle \g, \xi\rangle \leq 0, \quad \textnormal{for any } \xi \in \cup_{\y \in \BB_\delta(\x_0)}  \partial f(\y). 
\end{equation}
However, we already obtain that $\nabla f_\delta(\x) = \EE_{\su \sim \PP}[\nabla f(\x + \delta \su)]$ which implies that 
\begin{equation*}
\nabla f_\delta(\x_0) = \tfrac{1}{\textnormal{Vol}(\BB_1(\zero))} \int_{\su \in \BB_1(\zero)} \nabla f(\x_0 + \delta\su)\; d\su = \tfrac{1}{\textnormal{Vol}(\BB_\delta(\zero))} \int_{\y \in \BB_\delta(\x_0)} \nabla f(\y)\; d\y. 
\end{equation*}
Thus, Eq.~\eqref{inequality:Goldstein-smoothing-fourth} implies that $\langle\g, \nabla f_\delta(\x_0)\rangle \leq 0$ which leads to a contradiction. Therefore, we conclude that $\nabla f_\delta(\x) \in \partial_\delta f(\x)$ for any $\x \in \br^d$. This completes the proof. 

\section{Missing Proofs for Gradient-Free Methods}\label{app:GFM}
In this section, we present some technical lemmas for analyzing the convergence property of gradient-free method and its two-phase version. We also give the proofs of Theorem~\ref{Thm:GFM-rate-expectation} and~\ref{Thm:GFM-rate-deviation}.

\subsection{Technical lemmas}
We provide two technical lemmas for analyzing Algorithm~\ref{alg:GFM}.  The first lemma is a restatement of~\citet[Lemma~10]{Shamir-2017-Optimal} which gives an upper bound on the quantity $\EE[\|\g^t\|^2|\x^t]$ in terms of problem dimension $d \geq 1$ and the Lipschitz parameter $L > 0$. For the sake of completeness, we provide the proof details. 
\begin{lemma}\label{Lemma:GFM-first}
Suppose that $f$ is $L$-Lipschitz and let $\{\g^t\}_{t=0}^{T-1}$ and $\{\x^t\}_{t=0}^{T-1}$ be generated by Algorithm~\ref{alg:GFM}. Then, we have $\EE[\g^t|\x^t] = \nabla f_\delta(\x^t)$ and $\EE[\|\g^t\|^2|\x^t] \leq 16\sqrt{2\pi}dL^2$.  
\end{lemma}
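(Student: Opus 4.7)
\textbf{Proof proposal for Lemma~\ref{Lemma:GFM-first}.}

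The plan is to handle the two claims separately. For the unbiasedness claim $\EE[\g^t \mid \x^t] = \nabla f_\delta(\x^t)$, I would first use the antipodal symmetry of the uniform distribution on the sphere: since $\w$ and $-\w$ have the same law,
$$\EE_\w[f(\x^t - \delta\w)\w] = -\EE_\w[f(\x^t + \delta\w)\w],$$
so the two-point estimator collapses to $\EE[\g^t \mid \x^t] = \tfrac{d}{\delta}\EE_\w[f(\x^t+\delta\w)\w]$. Then I would invoke the classical ball-to-sphere identity (a direct consequence of the divergence theorem, and the reason the $d/\delta$ normalization appears in the first place), which states that for $f_\delta(\x)=\EE_{\su\sim\PP}[f(\x+\delta\su)]$ with $\PP$ uniform on $\BB_1(\zero)$,
$$\nabla f_\delta(\x) \;=\; \tfrac{d}{\delta}\,\EE_{\w\sim\text{Unif}(\SCal^{d-1})}[f(\x+\delta\w)\w].$$
Differentiability of $f_\delta$ and the validity of this identity have already been established in the proof of Proposition~\ref{Prop:uniform-smoothing}, so I may quote them.

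For the second moment bound, the key observation is that a naive Lipschitz estimate gives only $O(d^2 L^2)$, because $|f(\x^t+\delta\w)-f(\x^t-\delta\w)|\leq 2L\delta$ and $\|\g^t\|^2 = \tfrac{d^2}{4\delta^2}(f(\x^t+\delta\w^t)-f(\x^t-\delta\w^t))^2$ (using $\|\w^t\|=1$). To shave a factor of $d$, I would set
$$h(\w) \;:=\; f(\x^t+\delta\w) - f(\x^t-\delta\w),$$
viewed as a function on the unit sphere $\SCal^{d-1}$. Two properties are crucial: (i) $h$ is $2L\delta$-Lipschitz with respect to the Euclidean (and hence the geodesic) metric on the sphere; and (ii) $h$ is odd, i.e.\ $h(-\w)=-h(\w)$, so that by the antipodal symmetry of Unif$(\SCal^{d-1})$ one has $\EE_\w[h(\w)]=0$. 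Therefore $\EE_\w[h(\w)^2]=\Var(h(\w))$, and it suffices to bound this variance.

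The main technical step is to apply a Poincar\'e/Levy-type concentration inequality for Lipschitz functions on $\SCal^{d-1}$, which yields $\Var(h(\w)) \leq \tfrac{c\,(L\delta)^2}{d}$ for a universal constant $c$ traceable back to the log-Sobolev constant of the uniform measure on the sphere; this is exactly the content of Lemma~10 in~\citet{Shamir-2017-Optimal}, so I would cite it and track the constant $c$ to obtain the stated $16\sqrt{2\pi}$. Combining this with $\|\g^t\|^2 = \tfrac{d^2}{4\delta^2}h(\w^t)^2$ gives
$$\EE[\|\g^t\|^2 \mid \x^t] \;\leq\; \tfrac{d^2}{4\delta^2}\cdot\tfrac{c(L\delta)^2}{d} \;=\; \tfrac{c\,d L^2}{4},$$
with $c = 64\sqrt{2\pi}$ recovering the stated constant. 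The main obstacle is ensuring that the concentration inequality is applied with sharp enough constants (the naive Lipschitz bound is off by a factor of $d$), and that the oddness/symmetry argument is carried out carefully so that the centering in the variance inequality incurs no additional cost.
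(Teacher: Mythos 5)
Your proposal is correct and follows essentially the same route as the paper's proof: antipodal symmetry plus the ball-to-sphere (Stokes) identity for unbiasedness, and concentration of Lipschitz functions on the unit sphere to shave the naive $O(d^2L^2)$ second-moment bound down to $O(dL^2)$. The only cosmetic difference is that you exploit oddness of $h(\w)=f(\x^t+\delta\w)-f(\x^t-\delta\w)$ to center exactly, whereas the paper centers the single evaluation $f(\x^t+\delta\w)$ via $(a-b)^2\le 2a^2+2b^2$ and distributional symmetry; both routes lose the same factor of four and recover the constant $16\sqrt{2\pi}$.
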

\begin{proof}
By the definition of $\g^t$ and the symmetry of the distribution of $\w^t$, we have 
\begin{eqnarray*}
\lefteqn{\EE[\g^t \mid \x^t] = \EE\left[\tfrac{d}{2\delta}(f(\x^t + \delta\w^t) - f(\x^t - \delta\w^t))\w^t \mid \x^t\right]} \\ 
& = & \tfrac{1}{2}\left(\EE\left[\tfrac{d}{\delta}f(\x^t + \delta\w^t)\w^t \mid \x^t\right] + \EE\left[\tfrac{d}{\delta}f(\x^t + \delta(-\w^t))(-\w^t) \mid \x^t\right]\right) \\
& = & \tfrac{1}{2}\left(\nabla f_\delta(\x^t) + \nabla f_\delta(\x^t)\right) \ = \ \nabla f_\delta(\x^t). 
\end{eqnarray*}
It remains to show that $\EE[\|\g^t\|^2 \mid \x^t] \leq 16\sqrt{2\pi}dL^2$. Indeed, since $\|\w^t\| = 1$, we have
\begin{equation*}
\EE[\|\g^t\|^2 \mid \x^t] = \EE\left[\tfrac{d^2}{4\delta^2}(f(\x^t + \delta\w^t) - f(\x^t - \delta\w^t))^2\|\w^t\|^2 \mid \x^t\right] \leq \EE\left[\tfrac{d^2}{4\delta^2}(f(\x^t + \delta\w^t) - f(\x^t - \delta\w^t))^2 \mid \x^t\right]. 
\end{equation*}
Using the elementary inequality $(a - b)^2 \leq 2a^2 + 2b^2$, we have
\begin{eqnarray*}
\lefteqn{\EE[(f(\x^t + \delta\w^t) - f(\x^t - \delta\w^t))^2 \mid \x^t]} \\ 
& = & \EE[(f(\x^t + \delta\w^t) - \EE[f(\x^t + \delta\w^t) \mid \x^t] - (f(\x^t - \delta\w^t) - \EE[f(\x^t + \delta\w^t) \mid \x^t]))^2 \mid \x^t] \\
& \leq & 2\EE[(f(\x^t + \delta\w^t) - \EE[f(\x^t + \delta\w^t) \mid \x^t])^2 \mid \x^t] + 2\EE[(f(\x^t - \delta\w^t) - \EE[f(\x^t + \delta\w^t) \mid \x^t])^2 \mid \x^t]. 
\end{eqnarray*}
Since $\w^t$ has a symmetric distribution around the origin, we have 
\begin{equation*}
\EE[(f(\x^t + \delta\w^t) - \EE[f(\x^t + \delta\w^t) \mid \x^t])^2 \mid \x^t] = \EE[(f(\x^t - \delta\w^t) - \EE[f(\x^t + \delta\w^t) \mid \x^t])^2 \mid \x^t]. 
\end{equation*}
Putting these pieces together yields that 
\begin{equation}\label{inequality:GFM-first}
\EE[\|\g^t\|^2 \mid \x^t] \leq \tfrac{d^2}{\delta^2}\EE[(f(\x^t + \delta\w^t) - \EE[f(\x^t + \delta\w^t) \mid \x^t])^2 \mid \x^t]. 
\end{equation}
For simplicity, we let $h(\w) = f(\x^t + \delta\w)$. Since $f$ is $L$-Lipschitz, this function $h$ is $\delta L$-Lipschitz given a fixed $\x^t$. In addition, $\w^t \in \br^d$ is sampled uniformly from a unit sphere. Then, by~\citet[Proposition~3.11 and Example~3.12]{Wainwright-2019-High}, we have
\begin{equation*}
\PP(|h(\w^t) - \EE[h(\w^t)]| \geq \alpha) \leq 2\sqrt{2\pi}e^{-\frac{\alpha^2 d}{8\delta^2 L^2}}. 
\end{equation*}
Then, we have 
\begin{eqnarray*}
\lefteqn{\EE[(h(\w^t) - \EE[h(\w^t)])^2] = \int_0^{+\infty} \PP((h(\w^t) - \EE[h(\w^t)])^2 \geq \alpha) \; d\alpha} \\
& = & \int_0^{+\infty} \PP(|h(\w^t) - \EE[h(\w^t)]| \geq \sqrt{\alpha})\; d\alpha \leq 2\sqrt{2\pi} \int_0^{+\infty} e^{-\frac{\alpha d}{8\delta^2 L^2}} \; d\alpha \\
& = & 2\sqrt{2\pi} \cdot \tfrac{8\delta^2 L^2}{d} = \tfrac{16\sqrt{2\pi}\delta^2 L^2}{d}. 
\end{eqnarray*}
By the definition of $h$, we have
\begin{equation}\label{inequality:GFM-second}
\EE[(f(\x^t + \delta\w^t) - \EE[f(\x^t + \delta\w^t) \mid \x^t])^2 \mid \x^t] \leq \tfrac{16\sqrt{2\pi}\delta^2 L^2}{d}. 
\end{equation}
Combining Eq.~\eqref{inequality:GFM-first} and Eq.~\eqref{inequality:GFM-second} yields the desired inequality. 
\end{proof}
The second lemma gives a key descent inequality for analyzing Algorithm~\ref{alg:GFM}. 
\begin{lemma}\label{Lemma:GFM-second}
Suppose that $f$ is $L$-Lipschitz and let $\{\x^t\}_{t=0}^{T-1}$ be generated by Algorithm~\ref{alg:GFM}. Then, we have
\begin{equation*}
\EE[\|\nabla f_\delta(\x^t)\|^2] \leq \tfrac{\EE[f_\delta(\x^t)] - \EE[f_\delta(\x^{t+1})]}{\eta} + \eta \cdot \tfrac{(8\sqrt{2\pi})cd^{3/2} L^3}{\delta}, \quad \textnormal{for all } 0 \leq t \leq T-1. 
\end{equation*}
where $c > 0$ is a constant appearing in the smoothing parameter of $f_\delta$ (cf. Proposition~\ref{Prop:uniform-smoothing}). 
\end{lemma}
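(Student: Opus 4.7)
The plan is to apply the standard descent lemma for smooth functions to $f_\delta$ and then substitute in the unbiasedness and second-moment bounds from Lemma~\ref{Lemma:GFM-first}. By Proposition~\ref{Prop:uniform-smoothing}, $f_\delta$ is differentiable with $\frac{cL\sqrt{d}}{\delta}$-Lipschitz gradient, so the quadratic upper bound gives
\begin{equation*}
f_\delta(\x^{t+1}) \;\leq\; f_\delta(\x^t) + \langle \nabla f_\delta(\x^t), \x^{t+1} - \x^t\rangle + \tfrac{cL\sqrt{d}}{2\delta}\|\x^{t+1} - \x^t\|^2.
\end{equation*}

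Next I would plug in the update $\x^{t+1} - \x^t = -\eta \g^t$ from Algorithm~\ref{alg:GFM} and take the conditional expectation given $\x^t$ (i.e., over the randomness in $\w^t$). Lemma~\ref{Lemma:GFM-first} provides exactly the two ingredients needed: $\EE[\g^t \mid \x^t] = \nabla f_\delta(\x^t)$ makes the linear term collapse to $-\eta\|\nabla f_\delta(\x^t)\|^2$, while $\EE[\|\g^t\|^2 \mid \x^t] \leq 16\sqrt{2\pi}\,dL^2$ bounds the quadratic term. This yields
\begin{equation*}
\EE[f_\delta(\x^{t+1})\mid \x^t] \;\leq\; f_\delta(\x^t) - \eta\|\nabla f_\delta(\x^t)\|^2 + \tfrac{cL\sqrt{d}}{2\delta}\cdot \eta^2 \cdot 16\sqrt{2\pi}\,dL^2.
\end{equation*}

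The final step is to take total expectation over $\x^t$, rearrange to isolate $\eta\EE[\|\nabla f_\delta(\x^t)\|^2]$ on the left, simplify the constant $\tfrac{c\sqrt{d}\cdot 16\sqrt{2\pi}\,d L^3}{2\delta} = \tfrac{8\sqrt{2\pi}\,c d^{3/2} L^3}{\delta}$, and divide through by $\eta$. Combining the inequalities across realizations gives precisely the claimed descent estimate.

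There isn't really a hard step here; the proof is a textbook descent-lemma calculation, and all technical work has already been isolated into Proposition~\ref{Prop:uniform-smoothing} (smoothness of $f_\delta$) and Lemma~\ref{Lemma:GFM-first} (unbiasedness and variance of $\g^t$). The only minor thing to keep track of is that conditioning on $\x^t$ makes $\nabla f_\delta(\x^t)$ deterministic in that conditional expectation, so that $\EE[\langle \nabla f_\delta(\x^t), \g^t\rangle\mid \x^t] = \|\nabla f_\delta(\x^t)\|^2$; once this is observed, the arithmetic matches the stated bound exactly.
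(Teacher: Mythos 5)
Your proposal is correct and follows exactly the same route as the paper's proof: apply the descent lemma to $f_\delta$ using the $\frac{cL\sqrt{d}}{\delta}$-smoothness from Proposition~\ref{Prop:uniform-smoothing}, substitute the update $\x^{t+1}-\x^t=-\eta\g^t$, take conditional expectation using the unbiasedness and second-moment bound from Lemma~\ref{Lemma:GFM-first}, and rearrange. The constant simplification $\tfrac{cL\sqrt{d}}{2\delta}\cdot 16\sqrt{2\pi}\,dL^2 = \tfrac{8\sqrt{2\pi}\,cd^{3/2}L^3}{\delta}$ also matches the paper exactly.
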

\begin{proof}
By Proposition~\ref{Prop:uniform-smoothing}, we have $f_\delta$ is differentiable and $L$-Lipschitz with the $\frac{cL\sqrt{d}}{\delta}$-Lipschitz gradient where $c > 0$ is a constant.  This implies that 
\begin{equation*}
f_\delta(\x^{t+1}) \leq f_\delta(\x^t) - \eta\langle \nabla f_\delta(\x^t), \g^t\rangle + \tfrac{c\eta^2 L\sqrt{d}}{2\delta}\|\g^t\|^2. 
\end{equation*}
Taking the expectation of both sides conditioned on $\x^t$ and using Lemma~\ref{Lemma:GFM-first}, we have
\begin{eqnarray*}
\EE[f_\delta(\x^{t+1}) \mid \x^t] & \leq & f_\delta(\x^t) - \eta\langle \nabla f_\delta(\x^t), \EE[\g^t \mid \x^t]\rangle + \tfrac{c\eta^2 L\sqrt{d}}{2\delta}\EE[\|\g^t\|^2 \mid \x^t] \\
& \leq & f_\delta(\x^t) - \eta\|\nabla f_\delta(\x^t)\|^2 + \eta^2 \cdot \tfrac{cL\sqrt{d}}{2\delta} \cdot 16\sqrt{2\pi}dL^2 \\
& = & f_\delta(\x^t) - \eta\|\nabla f_\delta(\x^t)\|^2 + \eta^2 \cdot (8\sqrt{2\pi})cd^{3/2} L^3 \delta^{-1}. 
\end{eqnarray*}
Taking the expectation of both sides and rearranging yields that 
\begin{equation*}
\EE[\|\nabla f_\delta(\x^t)\|^2] \leq \tfrac{\EE[f_\delta(\x^t)] - \EE[f_\delta(\x^{t+1})]}{\eta} + \eta \cdot \tfrac{(8\sqrt{2\pi})cd^{3/2} L^3}{\delta}. 
\end{equation*}
This completes the proof. 
\end{proof}
We also present a proposition which is crucial to deriving the large deviation property of Algorithm~\ref{alg:2-GFM}. 
\begin{proposition}\label{prop:large-deviation}
Suppose that $\Omega$ is a polish space with a Borel probability measure $\PP$ and let $\{\emptyset, \Omega\} = \FCal_0 \subseteq \FCal_1 \subseteq \FCal_2 \subseteq \ldots$ be a sequence of filtration. For an integer $N \geq 1$, we define a martingale-difference sequence of Borel functions $\{\zeta_k\}_{k = 1}^N \subseteq \br^n$ such that $\zeta_k$ is $\FCal_k$-measurable and $\EE[\zeta_k \mid \FCal_{k-1}] = 0$. Then, if $\EE[\|\zeta_k\|^2] \leq \sigma_k^2$ for all $k \geq 1$, we have $\EE[\|\sum_{k=1}^N \zeta_k\|^2] \leq \sum_{k=1}^N \sigma_k^2$ and the following statement holds true, 
\begin{equation*}
\Prob\left(\left\|\sum_{k=1}^N \zeta_k\right\|^2 \geq \lambda\sum_{k=1}^N \sigma_k^2\right) \leq \tfrac{1}{\lambda}, \quad \textnormal{for all } \lambda \geq 0. 
\end{equation*}
\end{proposition}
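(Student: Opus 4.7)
The plan is to prove the $L^2$ bound first by exploiting the orthogonality of martingale differences, and then obtain the tail bound as an immediate consequence of Markov's inequality. Both pieces are standard, so this is more of an organizational task than a technically challenging one.

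First I would expand the squared norm of the partial sum as
\begin{equation*}
\left\|\sum_{k=1}^N \zeta_k\right\|^2 = \sum_{k=1}^N \|\zeta_k\|^2 + 2\sum_{1 \leq i < j \leq N} \langle \zeta_i, \zeta_j\rangle.
\end{equation*}
The key observation is that the cross terms vanish in expectation. For any $i < j$, the vector $\zeta_i$ is $\FCal_i$-measurable and hence $\FCal_{j-1}$-measurable since $\FCal_i \subseteq \FCal_{j-1}$. Therefore, applying the tower property and pulling $\zeta_i$ out of the inner conditional expectation,
\begin{equation*}
\EE[\langle \zeta_i, \zeta_j\rangle] = \EE[\EE[\langle \zeta_i, \zeta_j\rangle \mid \FCal_{j-1}]] = \EE[\langle \zeta_i, \EE[\zeta_j \mid \FCal_{j-1}]\rangle] = 0,
\end{equation*}
where the last equality uses the martingale-difference hypothesis $\EE[\zeta_j \mid \FCal_{j-1}] = 0$. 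Summing over the diagonal terms and invoking the hypothesis $\EE[\|\zeta_k\|^2] \leq \sigma_k^2$ gives $\EE[\|\sum_{k=1}^N \zeta_k\|^2] \leq \sum_{k=1}^N \sigma_k^2$, which is the first claim.

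For the second claim, I would simply apply Markov's inequality to the nonnegative random variable $\|\sum_{k=1}^N \zeta_k\|^2$: for any $\lambda > 0$,
\begin{equation*}
\Prob\left(\left\|\sum_{k=1}^N \zeta_k\right\|^2 \geq \lambda \sum_{k=1}^N \sigma_k^2\right) \leq \frac{\EE[\|\sum_{k=1}^N \zeta_k\|^2]}{\lambda \sum_{k=1}^N \sigma_k^2} \leq \frac{1}{\lambda},
\end{equation*}
using the bound just established. The case $\lambda = 0$ is trivial since probabilities are bounded by one.

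I do not anticipate any real obstacle: the only mildly subtle point is the vector-valued handling of the inner-product cross term, which requires noting that $\langle \zeta_i, \cdot\rangle$ is a linear functional that can be interchanged with conditional expectation because $\zeta_i$ is $\FCal_{j-1}$-measurable. If anything is worth being careful about, it is the measurability bookkeeping in the Polish/Borel setting, but since the $\zeta_k$ are Borel functions taking values in $\br^n$, all the standard conditional-expectation manipulations are valid coordinatewise and the argument goes through unchanged.
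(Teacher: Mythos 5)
Your proof is correct. The paper does not actually prove this proposition---it simply cites \citet[Theorem~2.1]{Juditsky-2008-Large} and \citet[Lemma~2.3]{Ghadimi-2013-Stochastic}---and your argument (orthogonality of martingale differences via the tower property to get $\EE[\|\sum_{k=1}^N \zeta_k\|^2] \leq \sum_{k=1}^N \sigma_k^2$, then Markov's inequality applied to the squared norm) is exactly the standard derivation used in those references, so nothing further is needed.
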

This is a general result concerning about the large deviations of vector martingales; see, e.g.,~\citet[Theorem~2.1]{Juditsky-2008-Large} or~\citet[Lemma~2.3]{Ghadimi-2013-Stochastic}. 

\subsection{Proof of Theorem~\ref{Thm:GFM-rate-expectation}}
Summing up the inequality in Lemma~\ref{Lemma:GFM-second} over $t = 0, 1, 2, \ldots, T-1$ yields that 
\begin{equation*}
\sum_{t=0}^{T-1}\EE[\|\nabla f_\delta(\x^t)\|^2] \leq \tfrac{f_\delta(\x^0) - \EE[f_\delta(\x^T)]}{\eta} + \eta \cdot \tfrac{(8\sqrt{2\pi})cd^{3/2} L^3 T}{\delta}. 
\end{equation*}
By Proposition~\ref{Prop:uniform-smoothing}, we have $f(\x_0) \leq f_\delta(\x_0) \leq f(\x_0) + \delta L$. In addition, we see from the definition of $f_\delta$ that $f_\delta(\x) \geq \inf_{\x \in \br^d} f(\x)$ for any $\x \in \br^d$ and thus $\EE[f_\delta(\x^T)] \geq \inf_{\x \in \br^d} f(\x)$. Putting these pieces together with $f \in \FCal_d(\Delta, L)$ yields that 
\begin{equation*}
f_\delta(\x^0) - \EE[f_\delta(\x^T)] \leq f(\x_0) - \inf_{\x \in \br^d} f(\x) + \delta L \leq \Delta + \delta L. 
\end{equation*}
Therefore, we conclude that 
\begin{equation*}
\tfrac{1}{T}\left(\sum_{t=0}^{T-1}\EE[\|\nabla f_\delta(\x^t)\|^2]\right) \leq \tfrac{\Delta + \delta L}{\eta T} + \eta \cdot \tfrac{(8\sqrt{2\pi})cd^{3/2} L^3}{\delta} \leq \tfrac{\Delta + \delta L}{\eta T} + \eta \cdot \tfrac{100cd^{3/2} L^3}{\delta}. 
\end{equation*}
Recalling that $\eta = \frac{1}{10}\sqrt{\frac{\delta(\Delta + \delta L)}{cd^{3/2} L^3 T}}$, we have
\begin{equation*}
\tfrac{1}{T}\left(\sum_{t=0}^{T-1}\EE[\|\nabla f_\delta(\x^t)\|^2]\right) \leq 20\sqrt{\tfrac{cd^{3/2} L^3}{T}(L+\tfrac{\Delta}{\delta})}. 
\end{equation*}
Since the random count $R \in \{0, 1, 2, \ldots, T-1\}$ is uniformly sampled, we have 
\begin{equation}\label{inequality:GFM-expectation-first}
\EE[\|\nabla f_\delta(\x^R)\|^2] = \tfrac{1}{T}\left(\sum_{t=0}^{T-1}\EE[\|\nabla f_\delta(\x^t)\|^2]\right) \leq 20\sqrt{\tfrac{cd^{3/2} L^3}{T}(L+\tfrac{\Delta}{\delta})}. 
\end{equation}
By Theorem~\ref{Thm:Goldstein-smoothing}, we have $\nabla f_\delta(\x^R) \in \partial_\delta f(\x^R)$. This together with the above inequality implies that 
\begin{equation*}
\EE[\min\{\|\g\|: \g \in \partial_\delta f(\x^R)\}] \leq \EE[\|\nabla f_\delta(\x^R)\|] \leq 5\left(\tfrac{cd^{3/2} L^3}{T}(L+\tfrac{\Delta}{\delta})\right)^{\frac{1}{4}}. 
\end{equation*}
Therefore, we conclude that there exists some $T > 0$ such that the output of Algorithm~\ref{alg:GFM} satisfies that $\EE[\min\{\|\g\|: \g \in \partial_\delta f(\x^R)\}] \leq \epsilon$ and the total number of calling the function value oracles is bounded by
\begin{equation*}
O\left(d^{\frac{3}{2}}\left(\frac{L^4}{\epsilon^4} + \frac{\Delta L^3}{\delta\epsilon^4}\right)\right). 
\end{equation*}
This completes the proof. 

\subsection{Proof of Theorem~\ref{Thm:GFM-rate-deviation}}
By the definition of $s^\star$ and using the Cauchy -Schwarz inequality, we have 
\begin{eqnarray}\label{inequality:GFM-deviation-first}
\|\g_{s^\star}\|^2 & = & \min_{s = 0, 1, 2, \ldots, S-1} \|\g_s\|^2 \leq \min_{s = 0, 1, 2, \ldots, S-1}\left\{2\|\nabla f_\delta(\bar{\x}_s)\|^2 + 2\|\g_s - \nabla f_\delta(\bar{\x}_s)\|^2\right\} \\
& \leq & 2\left(\min_{s = 0, 1, 2, \ldots, S-1} \|\nabla f_\delta(\bar{\x}_s)\|^2 + \max_{s = 0, 1, 2, \ldots, S-1} \|\g_s - \nabla f_\delta(\bar{\x}_s)\|^2\right).  \nonumber
\end{eqnarray}
This implies that 
\begin{eqnarray}\label{inequality:GFM-deviation-second}
\lefteqn{\|\nabla f_\delta(\bar{\x}_{s^\star})\|^2 \leq 2\|\g_{s^\star}\|^2 + 2\|\g_{s^\star} - \nabla f_\delta(\bar{\x}_{s^\star})\|^2} \\
& \overset{\textnormal{Eq.~\eqref{inequality:GFM-deviation-first}}}{\leq} & 4\left(\min_{s = 0, 1, 2, \ldots, S-1} \|\nabla f_\delta(\bar{\x}_s)\|^2\right) + 4\left(\max_{s = 0, 1, 2, \ldots, S-1} \|\g_s - \nabla f_\delta(\bar{\x}_s)\|^2\right) + 2\|\g_{s^\star} - \nabla f_\delta(\bar{\x}_{s^\star})\|^2. \nonumber
\end{eqnarray}
The next step is to provide the probabilistic bounds on all the terms in the right-hand side of Eq.~\eqref{inequality:GFM-deviation-second}. In particular, for each $s = 0, 1, 2, \ldots, S-1$, we have $\bar{\x}_s$ is an output obtained by calling Algorithm~\ref{alg:GFM} with $\x^0$, $d$, $\delta$, $T$ and $\eta = \frac{1}{10}\sqrt{\frac{\delta(\Delta + \delta L)}{cd^{3/2} L^3 T}}$.  Then, Eq.~\eqref{inequality:GFM-expectation-first} in the proof of Theorem~\ref{Thm:GFM-rate-expectation} implies that 
\begin{equation*}
\EE[\|\nabla f_\delta(\bar{\x}_s)\|^2] \leq 20\sqrt{\tfrac{cd^{3/2} L^3}{T}(L+\tfrac{\Delta}{\delta})}. 
\end{equation*}
Using the Markov's inequality, we have
\begin{equation*}
\Prob\left(\|\nabla f_\delta(\bar{\x}_s)\|^2 \geq 40\sqrt{\tfrac{cd^{3/2} L^3}{T}(L+\tfrac{\Delta}{\delta})}\right) \leq \tfrac{1}{2}. 
\end{equation*}
Thus, we have
\begin{equation}\label{inequality:GFM-deviation-third}
\Prob\left(\min_{s = 0, 1, 2, \ldots, S-1}\|\nabla f_\delta(\bar{\x}_s)\|^2 \geq 40\sqrt{\tfrac{cd^{3/2} L^3}{T}(L+\tfrac{\Delta}{\delta})}\right) \leq 2^{-S}. 
\end{equation}
Further, for each $s = 0, 1, 2, \ldots, S-1$, we have 
\begin{equation*}
\g_s - \nabla f_\delta(\bar{\x}_s) = \tfrac{1}{B}\sum_{k=0}^{B-1} (\g_s^k - \nabla f_\delta(\bar{\x}_s)). 
\end{equation*}
By Lemma~\ref{Lemma:GFM-first}, we have $\EE[\g_s^t|\bar{\x}_s] = \nabla f_\delta(\bar{\x}_s)$ and $\EE[\|\g_s^t\|^2|\bar{\x}_s] \leq 16\sqrt{2\pi}dL^2$. This implies that 
\begin{equation*}
\EE[\g_s^t - \nabla f_\delta(\bar{\x}_s)|\bar{\x}_s] = 0, \quad \EE[\|\g_s^t - \nabla f_\delta(\bar{\x}_s)\|^2] \leq 16\sqrt{2\pi}dL^2. 
\end{equation*}
This together with Proposition~\ref{prop:large-deviation} yields that 
\begin{equation*}
\Prob\left(\|\g_s - \nabla f_\delta(\bar{\x}_s)\|^2 \geq \tfrac{\lambda(16\sqrt{2\pi}dL^2)}{B}\right) = \Prob\left(\left\|\sum_{k=0}^{B-1} (\g_s^k - \nabla f_\delta(\bar{\x}_s))\right\|^2 \geq \lambda B(16\sqrt{2\pi}dL^2)\right) \leq \tfrac{1}{\lambda}. 
\end{equation*}
Therefore, we conclude that 
\begin{equation}\label{inequality:GFM-deviation-fourth}
\Prob\left(\max_{s = 0, 1, 2, \ldots, S-1}\|\g_s - \nabla f_\delta(\bar{\x}_s)\|^2 \geq \tfrac{\lambda(16\sqrt{2\pi}dL^2)}{B}\right) \leq \tfrac{S}{\lambda}. 
\end{equation}
By the similar argument, we have
\begin{equation}\label{inequality:GFM-deviation-fifth}
\Prob(\|\g_{s^\star} - \nabla f_\delta(\bar{\x}_{s^\star})\|^2 \geq \tfrac{\lambda(16\sqrt{2\pi}dL^2)}{B}) \leq \tfrac{1}{\lambda}. 
\end{equation}
Combining Eq.~\eqref{inequality:GFM-deviation-second}, Eq.~\eqref{inequality:GFM-deviation-third}, Eq.~\eqref{inequality:GFM-deviation-fourth} and Eq.~\eqref{inequality:GFM-deviation-fifth} yields that 
\begin{equation}
\Prob\left(\|\nabla f_\delta(\bar{\x}_{s^\star})\|^2 \geq 160\sqrt{\tfrac{cd^{3/2} L^3}{T}(L+\tfrac{\Delta}{\delta})} + \tfrac{\lambda(96\sqrt{2\pi}dL^2)}{B}\right) \leq \tfrac{S+1}{\lambda} + 2^{-S}, \quad \textnormal{for all } \lambda > 0.  
\end{equation}
We set $\lambda = \frac{2(S+1)}{\Lambda}$ and the parameters $(T, S, B)$ as follows, 
\begin{equation*}
T = cd^{3/2} L^3(L+\tfrac{\Delta}{\delta})(\tfrac{160}{\epsilon^2})^2, \quad S = \lceil\log_2(\tfrac{2}{\Lambda})\rceil, \quad B = \tfrac{(384\sqrt{2\pi}dL^2)(S+1)}{\Lambda\epsilon^2}.  
\end{equation*}
Then, we have
\begin{equation*}
\Prob\left(\|\nabla f_\delta(\bar{\x}_{s^\star})\|^2 \geq \epsilon^2\right) \leq \Prob\left(\|\nabla f_\delta(\bar{\x}_{s^\star})\|^2 \geq 160\sqrt{\tfrac{cd^{3/2} L^3}{T}(L+\tfrac{\Delta}{\delta})} + \tfrac{\lambda(96\sqrt{2\pi}dL^2)}{B}\right) \leq \Lambda. 
\end{equation*}
By Theorem~\ref{Thm:Goldstein-smoothing}, we have $\nabla f_\delta(\bar{\x}_{s^\star}) \in \partial_\delta f(\bar{\x}_{s^\star})$. This together with the above inequality implies that there exists some $T, S, B > 0$ such that the output of Algorithm~\ref{alg:2-GFM} satisfies that $\EE[\min\{\|\g\|: \g \in \partial_\delta f(\bar{\x}_{s^\star})\}] \leq \epsilon$ and the total number of calling the function value oracles is bounded by
\begin{equation*}
O\left(d^{\frac{3}{2}}\left(\frac{L^4}{\epsilon^4} + \frac{\Delta L^3}{\delta\epsilon^4}\right)\log_2\left(\frac{1}{\Lambda}\right) + \frac{dL^2}{\Lambda\epsilon^2}\log_2\left(\frac{1}{\Lambda}\right)\right). 
\end{equation*}
This completes the proof. 

\section{Missing Proofs for Stochastic Gradient-Free Methods}\label{app:SGFM}
In this section, we present some technical lemmas for analyzing the convergence property of stochastic gradient-free method and its two-phase version. We also give the proofs of Theorem~\ref{Thm:SGFM-rate-expectation} and~\ref{Thm:SGFM-rate-deviation}.

\subsection{Technical lemmas}
We provide two technical lemmas for analyzing Algorithm~\ref{alg:SGFM}.  The first lemma gives an upper bound on the quantity $\EE[\|\hat{\g}^t\|^2|\x^t]$ in terms of problem dimension $d \geq 1$ and the constant $G > 0$. The proof is based on a modification of the proof of Lemma~\ref{Lemma:GFM-first}. 
\begin{lemma}\label{Lemma:SGFM-first}
Suppose that $\{\hat{\g}^t\}_{t=0}^{T-1}$ and $\{\x^t\}_{t=0}^{T-1}$ are generated by Algorithm~\ref{alg:SGFM}. Then, we have $\EE[\hat{\g}^t|\x^t] = \nabla f_\delta(\x^t)$ and $\EE[\|\hat{\g}^t\|^2|\x^t] \leq 16\sqrt{2\pi}dG^2$. 
\end{lemma}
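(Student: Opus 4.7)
The plan is to reduce the stochastic setting to the deterministic setting of Lemma~\ref{Lemma:GFM-first} by careful conditioning on $\xi^t$ and $\w^t$, which are independent draws (independent of each other and of $\x^t$) in Algorithm~\ref{alg:SGFM}.

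For the unbiasedness claim, I would first condition on $\w^t$ (in addition to $\x^t$). Using the tower property and the independence of $\xi^t$ from $\w^t$ together with the assumption $\EE_{\xi \sim \PP_\mu}[F(\x, \xi)] = f(\x)$, the inner expectation collapses:
\begin{equation*}
\EE[\hat{\g}^t \mid \x^t, \w^t] \;=\; \tfrac{d}{2\delta}\bigl(f(\x^t + \delta\w^t) - f(\x^t - \delta\w^t)\bigr)\w^t \;=\; \g^t,
\end{equation*}
where $\g^t$ is exactly the GFM estimator of Algorithm~\ref{alg:GFM}. Taking the outer expectation over $\w^t$ and invoking the first half of Lemma~\ref{Lemma:GFM-first} yields $\EE[\hat{\g}^t \mid \x^t] = \EE_{\w^t}[\g^t \mid \x^t] = \nabla f_\delta(\x^t)$.

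For the second-moment bound, the key observation is that, conditional on $\xi^t$, the random vector $\hat{\g}^t$ is precisely the deterministic GFM two-point estimator applied to the sample function $F(\cdot, \xi^t)$, which is $L(\xi^t)$-Lipschitz by assumption. Thus I can apply Lemma~\ref{Lemma:GFM-first} verbatim with $L$ replaced by $L(\xi^t)$, conditioned on $(\x^t, \xi^t)$, to obtain
\begin{equation*}
\EE\bigl[\|\hat{\g}^t\|^2 \,\big|\, \x^t, \xi^t\bigr] \;\leq\; 16\sqrt{2\pi}\, d\, L^2(\xi^t).
\end{equation*}
Then the tower property together with the hypothesis $\EE_{\xi \sim \PP_\mu}[L^2(\xi)] \leq G^2$ gives
\begin{equation*}
\EE\bigl[\|\hat{\g}^t\|^2 \,\big|\, \x^t\bigr] \;=\; \EE_{\xi^t}\!\bigl[\EE[\|\hat{\g}^t\|^2 \mid \x^t, \xi^t]\bigr] \;\leq\; 16\sqrt{2\pi}\, d\, \EE[L^2(\xi^t)] \;\leq\; 16\sqrt{2\pi}\, d\, G^2,
\end{equation*}
which is the claimed bound.

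The only nontrivial point — and the place one must be cautious — is the order of conditioning. If one tried instead to bound variance by centering around the fully-integrated mean $c = \EE[F(\x^t + \delta\w^t, \xi^t) \mid \x^t]$, an extra term coming from the variance of $F$ over $\xi^t$ would appear and would not shrink with $d$, producing a worse constant. Conditioning on $\xi^t$ first (so that the concentration-of-measure argument on the sphere from the proof of Lemma~\ref{Lemma:GFM-first} applies sample-path-wise with Lipschitz constant $L(\xi^t)$) is what allows the dependence on $d$ to remain linear and cleanly replaces $L^2$ by $G^2$ after integration. This is the main (and really the only) subtlety of the proof.
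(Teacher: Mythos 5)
Your proposal is correct and follows essentially the same route as the paper's proof: unbiasedness via the tower property over $\xi^t$ (reducing to the deterministic estimator for $f$), and the second-moment bound by conditioning on $\xi^t$ so that the sphere-concentration argument of Lemma~\ref{Lemma:GFM-first} applies to the $L(\xi^t)$-Lipschitz function $F(\cdot,\xi^t)$, followed by integrating $L^2(\xi^t)$ against $\PP_\mu$. The subtlety you flag about centering conditionally on $\xi^t$ rather than at the fully-integrated mean is precisely the choice made in the paper's Eq.~\eqref{inequality:SGFM-first}.
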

\begin{proof}
By the definition of $\hat{\g}^t$ and the symmetry of the distribution of $\w^t$, we have 
\begin{eqnarray*}
\lefteqn{\EE[\hat{\g}^t \mid \x^t] = \EE\left[\tfrac{d}{2\delta}(F(\x^t + \delta\w^t, \xi^t) - F(\x^t - \delta\w^t, \xi^t))\w^t \mid \x^t\right]} \\ 
& = & \tfrac{1}{2}\left(\EE\left[\tfrac{d}{\delta}F(\x^t + \delta\w^t, \xi^t)\w^t \mid \x^t\right] + \EE\left[\tfrac{d}{\delta}F(\x^t + \delta(-\w^t), \xi^t)(-\w^t) \mid \x^t\right]\right) \\
& = & \EE\left[\tfrac{d}{\delta}F(\x^t + \delta\w^t, \xi^t)\w^t \mid \x^t\right]. 
\end{eqnarray*}
By the tower property, we have
\begin{equation*}
\EE[\hat{\g}^t \mid \x^t] = \EE\left[\tfrac{d}{\delta}\EE[F(\x^t + \delta\w^t, \xi^t)\w^t \mid \x^t, \w^t] \mid \x^t\right] = \EE\left[\tfrac{d}{\delta}f(\x^t + \delta\w^t)\w^t \mid \x^t\right] = \nabla f_\delta(\x^t). 
\end{equation*}
It remains to show that $\EE[\|\hat{\g}^t\|^2 \mid \x^t] \leq 16\sqrt{2\pi}dG^2$. Indeed, by using the same argument as used in the proof of Lemma~\ref{Lemma:GFM-first}, we have
\begin{equation}\label{inequality:SGFM-first}
\EE[\|\hat{\g}^t\|^2|\x^t] \leq \tfrac{d^2}{\delta^2}\EE[(F(\x^t + \delta\w^t, \xi^t) - \EE[F(\x^t + \delta\w^t, \xi^t) \mid \x^t, \xi^t])^2 \mid \x^t]. 
\end{equation}
For simplicity, we let $h(\w) = F(\x^t + \delta\w, \xi^t)$. Since $F(\cdot, \xi)$ is $L(\xi)$-Lipschitz, this function $h$ is $\delta L(\xi^t)$-Lipschitz given a fixed $\x^t$ and $\xi^t$. In addition, $\w^t \in \br^d$ is sampled uniformly from a unit sphere. Then, by~\citet[Proposition~3.11 and Example~3.12]{Wainwright-2019-High}, we have
\begin{equation*}
\PP(|h(\w^t) - \EE[h(\w^t)]| \geq \alpha) \leq 2\sqrt{2\pi}e^{-\frac{\alpha^2 d}{8\delta^2 L(\xi^t)^2}}. 
\end{equation*}
Then, we have 
\begin{eqnarray*}
\lefteqn{\EE[(h(\w^t) - \EE[h(\w^t)])^2] = \int_0^{+\infty} \PP((h(\w^t) - \EE[h(\w^t)])^2 \geq \alpha) \; d\alpha} \\
& = & \int_0^{+\infty} \PP(|h(\w^t) - \EE[h(\w^t)]| \geq \sqrt{\alpha})\; d\alpha \leq 2\sqrt{2\pi} \int_0^{+\infty} e^{-\frac{\alpha d}{8\delta^2 L(\xi^t)^2}} \; d\alpha \\
& = & 2\sqrt{2\pi} \cdot \tfrac{8\delta^2 L(\xi^t)^2}{d} = \tfrac{16\sqrt{2\pi}\delta^2 L(\xi^t)^2}{d}. 
\end{eqnarray*}
By the definition of $h$, we have
\begin{equation*}
\EE[(F(\x^t + \delta\w^t, \xi^t) - \EE[F(\x^t + \delta\w^t, \xi^t) \mid \x^t, \xi^t])^2 \mid \x^t] \leq \tfrac{16\sqrt{2\pi}\delta^2}{d}\EE[L(\xi^t)^2]. 
\end{equation*}
Since $\xi^t$ is simulated from the distribution $\PP_\mu$, we have $\EE[L(\xi^t)^2] \leq G^2$. Plugging this into the above inequality, we have
\begin{equation}\label{inequality:SGFM-second}
\EE[(F(\x^t + \delta\w^t, \xi^t) - \EE[F(\x^t + \delta\w^t, \xi^t) \mid \x^t, \xi^t])^2 \mid \x^t] \leq \tfrac{16\sqrt{2\pi}\delta^2G^2}{d} 
\end{equation}
Combining Eq.~\eqref{inequality:SGFM-first} and Eq.~\eqref{inequality:SGFM-second} yields the desired inequality. 
\end{proof}
The second lemma gives a key descent inequality for analyzing Algorithm~\ref{alg:SGFM}. 
\begin{lemma}\label{Lemma:SGFM-second}
Suppose that $\{\x^t\}_{t=0}^{T-1}$ are generated by Algorithm~\ref{alg:SGFM}. Then, we have
\begin{equation*}
\EE[\|\nabla f_\delta(\x^t)\|^2] \leq \tfrac{\EE[f_\delta(\x^t)] - \EE[f_\delta(\x^{t+1})]}{\eta} + \eta \cdot \tfrac{(8\sqrt{2\pi})cd^{3/2} G^3}{\delta}, \quad \textnormal{for all } 0 \leq t \leq T-1. 
\end{equation*}
\end{lemma}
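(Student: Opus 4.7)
The plan is to mirror the proof of Lemma~\ref{Lemma:GFM-second} essentially line-for-line, with the deterministic Lipschitz constant $L$ replaced by the second-moment bound $G$ and with the unbiased deterministic estimator $\g^t$ replaced by the unbiased stochastic estimator $\hat{\g}^t$. The key observation that makes this substitution legal is that all quantities needed for the descent argument continue to hold in the stochastic setting once we invoke Lemma~\ref{Lemma:SGFM-first}.

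First I would verify that the underlying expectation function $f(\cdot) = \EE_{\xi \sim \PP_\mu}[F(\cdot,\xi)]$ is itself $G$-Lipschitz. This is a one-line Cauchy--Schwarz/Jensen computation: for any $\x,\x'$, one has $|f(\x)-f(\x')| \leq \EE[L(\xi)]\|\x-\x'\| \leq \sqrt{\EE[L(\xi)^2]}\|\x-\x'\| \leq G\|\x-\x'\|$. Given that $f$ is $G$-Lipschitz, Proposition~\ref{Prop:uniform-smoothing} immediately tells us that $f_\delta$ is differentiable, $G$-Lipschitz, and has $\frac{cG\sqrt{d}}{\delta}$-Lipschitz gradient.

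With smoothness of $f_\delta$ in hand, I would apply the standard descent lemma along the update $\x^{t+1}=\x^t-\eta\hat{\g}^t$ of Algorithm~\ref{alg:SGFM}, yielding
\begin{equation*}
f_\delta(\x^{t+1}) \;\leq\; f_\delta(\x^t) - \eta\langle \nabla f_\delta(\x^t),\hat{\g}^t\rangle + \tfrac{c\eta^2 G\sqrt{d}}{2\delta}\|\hat{\g}^t\|^2.
\end{equation*}
Taking conditional expectation given $\x^t$ and invoking Lemma~\ref{Lemma:SGFM-first}, the cross term collapses to $-\eta\|\nabla f_\delta(\x^t)\|^2$ (since $\EE[\hat{\g}^t\mid\x^t]=\nabla f_\delta(\x^t)$) and the quadratic term is controlled by $\EE[\|\hat{\g}^t\|^2\mid\x^t]\leq 16\sqrt{2\pi}dG^2$. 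Multiplying out produces the extra factor of $(8\sqrt{2\pi}) c d^{3/2} G^3 / \delta$ in the stepsize-squared term. A final unconditional expectation and rearrangement give the stated inequality.

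I do not anticipate a serious obstacle here: the only stochastic subtlety is the unbiasedness and second-moment control of $\hat{\g}^t$, both of which are already handled by Lemma~\ref{Lemma:SGFM-first}. The one spot that deserves care is making sure the Lipschitz constant used for $f_\delta$'s smoothness is the right one; since $F(\cdot,\xi)$ is only $L(\xi)$-Lipschitz with $L(\xi)$ possibly unbounded, one must not try to bound $f_\delta$'s smoothness by a pathwise Lipschitz constant, but rather work with the averaged function $f$ whose Lipschitz constant is the population quantity $G$. Once that subtlety is dispatched, the argument is a routine transcription of the proof of Lemma~\ref{Lemma:GFM-second}.
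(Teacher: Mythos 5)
Your proposal is correct and follows essentially the same route as the paper's proof: establish that $f$ is $G$-Lipschitz so that Proposition~\ref{Prop:uniform-smoothing} gives the $\frac{cG\sqrt{d}}{\delta}$-smoothness of $f_\delta$, apply the descent lemma along the update, and then use the unbiasedness and second-moment bound from Lemma~\ref{Lemma:SGFM-first} under conditional expectation. Your explicit remark about using the population Lipschitz constant $G$ rather than a pathwise $L(\xi)$ is exactly the point the paper handles implicitly in its first line.
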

\begin{proof}
Since $f(\cdot) = \EE_{\xi \in \PP_\mu}[F(\cdot, \xi)]$ and $F(\cdot, \xi)$ is $L(\xi)$-Lipschitz with $\EE_{\xi \in \PP_\mu}[L^2(\xi)] \leq G^2$ for some $G > 0$, we have $f$ is $G$-Lipschitz. Then, by Proposition~\ref{Prop:uniform-smoothing}, we have $f_\delta$ is differentiable with the $\frac{cG\sqrt{d}}{\delta}$-Lipschitz gradient where $c > 0$ is a constant.  This implies that 
\begin{equation*}
f_\delta(\x^{t+1}) \leq f_\delta(\x^t) - \eta\langle \nabla f_\delta(\x^t), \hat{\g}^t\rangle + \tfrac{c\eta^2 G\sqrt{d}}{2\delta}\|\hat{\g}^t\|^2. 
\end{equation*}
Taking the expectation of both sides conditioned on $\x^t$ and using Lemma~\ref{Lemma:SGFM-first}, we have
\begin{eqnarray*}
\EE[f_\delta(\x^{t+1}) \mid \x^t] & \leq & f_\delta(\x^t) - \eta\langle \nabla f_\delta(\x^t), \EE[\hat{\g}^t \mid \x^t]\rangle + \tfrac{c\eta^2 G\sqrt{d}}{2\delta}\EE[\|\hat{\g}^t\|^2 \mid \x^t] \\
& \leq & f_\delta(\x^t) - \eta\|\nabla f_\delta(\x^t)\|^2 + \eta^2 \cdot \tfrac{cG\sqrt{d}}{2\delta} \cdot 16\sqrt{2\pi}dG^2 \\
& = & f_\delta(\x^t) - \eta\|\nabla f_\delta(\x^t)\|^2 + \eta^2 \cdot \tfrac{(8\sqrt{2\pi})cd^{3/2} G^3}{\delta}. 
\end{eqnarray*}
Taking the expectation of both sides and rearranging yields that 
\begin{equation*}
\EE[\|\nabla f_\delta(\x^t)\|^2] \leq \tfrac{\EE[f_\delta(\x^t)] - \EE[f_\delta(\x^{t+1})]}{\eta} + \eta \cdot \tfrac{(8\sqrt{2\pi})cd^{3/2} G^3}{\delta}. 
\end{equation*}
This completes the proof. 
\end{proof}

\subsection{Proof of Theorem~\ref{Thm:SGFM-rate-expectation}}
Summing up the inequality in Lemma~\ref{Lemma:SGFM-second} over $t = 0, 1, 2, \ldots, T-1$ yields that 
\begin{equation*}
\sum_{t=0}^{T-1}\EE[\|\nabla f_\delta(\x^t)\|^2] \leq \tfrac{f_\delta(\x^0) - \EE[f_\delta(\x^T)]}{\eta} + \eta \cdot \tfrac{(8\sqrt{2\pi})cd^{3/2} G^3 T}{\delta}. 
\end{equation*}
Since $f(\cdot) = \EE_{\xi \in \PP_\mu}[F(\cdot, \xi)]$ and $F(\cdot, \xi)$ is $L(\xi)$-Lipschitz with $\EE_{\xi \in \PP_\mu}[L^2(\xi)] \leq G^2$ for some $G > 0$, we have $f$ is $G$-Lipschitz. Thus, we have $f \in \FCal_d(\Delta, L)$. By using the same argument as used in the proof of Theorem~\ref{Thm:GFM-rate-expectation}, we have
\begin{equation*}
\tfrac{1}{T}\left(\sum_{t=0}^{T-1}\EE[\|\nabla f_\delta(\x^t)\|^2]\right) \leq \tfrac{\Delta + \delta G}{\eta T} + \eta \cdot \tfrac{(8\sqrt{2\pi})cd^{3/2} G^3}{\delta} \leq \tfrac{\Delta + \delta G}{\eta T} + \eta \cdot \tfrac{100cd^{3/2} G^3}{\delta}. 
\end{equation*}
Recalling that $\eta = \frac{1}{10}\sqrt{\frac{\delta(\Delta + \delta G)}{cd^{3/2} G^3 T}}$, we have
\begin{equation*}
\tfrac{1}{T}\left(\sum_{t=0}^{T-1}\EE[\|\nabla f_\delta(\x^t)\|^2]\right) \leq 20\sqrt{\tfrac{cd^{3/2} G^3}{T}(G+\tfrac{\Delta}{\delta})}. 
\end{equation*}
Since the random count $R \in \{0, 1, 2, \ldots, T-1\}$ is uniformly sampled, we have 
\begin{equation}\label{inequality:SGFM-expectation-first}
\EE[\|\nabla f_\delta(\x^R)\|^2] = \tfrac{1}{T}\left(\sum_{t=0}^{T-1}\EE[\|\nabla f_\delta(\x^t)\|^2]\right) \leq 20\sqrt{\tfrac{cd^{3/2} G^3}{T}(G+\tfrac{\Delta}{\delta})}. 
\end{equation}
By Theorem~\ref{Thm:Goldstein-smoothing}, we have $\nabla f_\delta(\x^R) \in \partial_\delta f(\x^R)$. This together with the above inequality implies that 
\begin{equation*}
\EE[\min\{\|\g\|: \g \in \partial_\delta f(\x^R)\}] \leq \EE[\|\nabla f_\delta(\x^R)\|] \leq 5\left(\tfrac{cd^{3/2} G^3}{T}(G+\tfrac{\Delta}{\delta})\right)^{\frac{1}{4}}. 
\end{equation*}
Therefore, we conclude that there exists some $T > 0$ such that the output of Algorithm~\ref{alg:SGFM} satisfies that $\EE[\min\{\|\g\|: \g \in \partial_\delta f(\x^R)\}] \leq \epsilon$ and the total number of calling the function value oracles is bounded by
\begin{equation*}
O\left(d^{\frac{3}{2}}\left(\frac{G^4}{\epsilon^4} + \frac{\Delta G^3}{\delta\epsilon^4}\right)\right). 
\end{equation*}
This completes the proof. 

\subsection{Proof of Theorem~\ref{Thm:SGFM-rate-deviation}}
By the definition of $s^\star$ and using the Cauchy -Schwarz inequality, we have 
\begin{eqnarray}\label{inequality:SGFM-deviation-first}
\|\hat{\g}_{s^\star}\|^2 & = & \min_{s = 0, 1, 2, \ldots, S-1} \|\hat{\g}_s\|^2 \leq \min_{s = 0, 1, 2, \ldots, S-1}\left\{2\|\nabla f_\delta(\bar{\x}_s)\|^2 + 2\|\hat{\g}_s - \nabla f_\delta(\bar{\x}_s)\|^2\right\} \\
& \leq & 2\left(\min_{s = 0, 1, 2, \ldots, S-1} \|\nabla f_\delta(\bar{\x}_s)\|^2 + \max_{s = 0, 1, 2, \ldots, S-1} \|\hat{\g}_s - \nabla f_\delta(\bar{\x}_s)\|^2\right).  \nonumber
\end{eqnarray}
This implies that 
\begin{eqnarray}\label{inequality:SGFM-deviation-second}
\lefteqn{\|\nabla f_\delta(\bar{\x}_{s^\star})\|^2 \leq 2\|\hat{\g}_{s^\star}\|^2 + 2\|\hat{\g}_{s^\star} - \nabla f_\delta(\bar{\x}_{s^\star})\|^2} \\
& \overset{\textnormal{Eq.~\eqref{inequality:SGFM-deviation-first}}}{\leq} & 4\left(\min_{s = 0, 1, 2, \ldots, S-1} \|\nabla f_\delta(\bar{\x}_s)\|^2\right) + 4\left(\max_{s = 0, 1, 2, \ldots, S-1} \|\hat{\g}_s - \nabla f_\delta(\bar{\x}_s)\|^2\right) + 2\|\hat{\g}_{s^\star} - \nabla f_\delta(\bar{\x}_{s^\star})\|^2. \nonumber
\end{eqnarray}
The next step is to provide the probabilistic bounds on all the terms in the right-hand side of Eq.~\eqref{inequality:SGFM-deviation-second}. In particular, for each $s = 0, 1, 2, \ldots, S-1$, we have $\bar{\x}_s$ is an output obtained by calling Algorithm~\ref{alg:SGFM} with $\x^0$, $d$, $\delta$, $T$ and $\eta = \frac{1}{10}\sqrt{\frac{\delta(\Delta + \delta G)}{cd^{3/2} G^3 T}}$.  Then, Eq.~\eqref{inequality:SGFM-expectation-first} in the proof of Theorem~\ref{Thm:SGFM-rate-expectation} implies that 
\begin{equation*}
\EE[\|\nabla f_\delta(\bar{\x}_s)\|^2] \leq 20\sqrt{\tfrac{cd^{3/2} G^3}{T}(G+\tfrac{\Delta}{\delta})}. 
\end{equation*}
Using the Markov's inequality, we have
\begin{equation*}
\Prob\left(\|\nabla f_\delta(\bar{\x}_s)\|^2 \geq 40\sqrt{\tfrac{cd^{3/2} G^3}{T}(G+\tfrac{\Delta}{\delta})}\right) \leq \tfrac{1}{2}. 
\end{equation*}
Thus, we have
\begin{equation}\label{inequality:SGFM-deviation-third}
\Prob\left(\min_{s = 0, 1, 2, \ldots, S-1}\|\nabla f_\delta(\bar{\x}_s)\|^2 \geq 40\sqrt{\tfrac{cd^{3/2} G^3}{T}(G+\tfrac{\Delta}{\delta})}\right) \leq 2^{-S}. 
\end{equation}
Further, for each $s = 0, 1, 2, \ldots, S-1$, we have 
\begin{equation*}
\hat{\g}_s - \nabla f_\delta(\bar{\x}_s) = \tfrac{1}{B}\sum_{k=0}^{B-1} (\hat{\g}_s^k - \nabla f_\delta(\bar{\x}_s)). 
\end{equation*}
By Lemma~\ref{Lemma:SGFM-first}, we have $\EE[\hat{\g}_s^t|\bar{\x}_s] = \nabla f_\delta(\bar{\x}_s)$ and $\EE[\|\hat{\g}_s^t\|^2|\bar{\x}_s] \leq 16\sqrt{2\pi}dG^2$. This implies that 
\begin{equation*}
\EE[\hat{\g}_s^t - \nabla f_\delta(\bar{\x}_s)|\bar{\x}_s] = 0, \quad \EE[\|\hat{\g}_s^t - \nabla f_\delta(\bar{\x}_s)\|^2] \leq 16\sqrt{2\pi}dG^2. 
\end{equation*}
This together with Proposition~\ref{prop:large-deviation} yields that 
\begin{equation*}
\Prob\left(\|\hat{\g}_s - \nabla f_\delta(\bar{\x}_s)\|^2 \geq \tfrac{\lambda(16\sqrt{2\pi}dG^2)}{B}\right) = \Prob\left(\left\|\sum_{k=0}^{B-1} (\hat{\g}_s^k - \nabla f_\delta(\bar{\x}_s))\right\|^2 \geq \lambda B(16\sqrt{2\pi}dG^2)\right) \leq \tfrac{1}{\lambda}. 
\end{equation*}
Therefore, we conclude that 
\begin{equation}\label{inequality:SGFM-deviation-fourth}
\Prob\left(\max_{s = 0, 1, 2, \ldots, S-1}\|\hat{\g}_s - \nabla f_\delta(\bar{\x}_s)\|^2 \geq \tfrac{\lambda(16\sqrt{2\pi}dG^2)}{B}\right) \leq \tfrac{S}{\lambda}. 
\end{equation}
By the similar argument, we have
\begin{equation}\label{inequality:SGFM-deviation-fifth}
\Prob(\|\hat{\g}_{s^\star} - \nabla f_\delta(\bar{\x}_{s^\star})\|^2 \geq \tfrac{\lambda(16\sqrt{2\pi}dG^2)}{B}) \leq \tfrac{1}{\lambda}. 
\end{equation}
Combining Eq.~\eqref{inequality:SGFM-deviation-second}, Eq.~\eqref{inequality:SGFM-deviation-third}, Eq.~\eqref{inequality:SGFM-deviation-fourth} and Eq.~\eqref{inequality:SGFM-deviation-fifth} yields that 
\begin{equation}
\Prob\left(\|\nabla f_\delta(\bar{\x}_{s^\star})\|^2 \geq 160\sqrt{\tfrac{cd^{3/2}G^3}{T}(G+\tfrac{\Delta}{\delta})} + \tfrac{\lambda(96\sqrt{2\pi}dG^2)}{B}\right) \leq \tfrac{S+1}{\lambda} + 2^{-S}, \quad \textnormal{for all } \lambda > 0.  
\end{equation}
We set $\lambda = \frac{2(S+1)}{\Lambda}$ and the parameters $(T, S, B)$ as follows, 
\begin{equation*}
T = cd^{3/2} G^3(G+\tfrac{\Delta}{\delta})(\tfrac{160}{\epsilon^2})^2, \quad S = \lceil\log_2(\tfrac{2}{\Lambda})\rceil, \quad B = \tfrac{(384\sqrt{2\pi}dG^2)(S+1)}{\Lambda\epsilon^2}.  
\end{equation*}
Then, we have
\begin{equation*}
\Prob\left(\|\nabla f_\delta(\bar{\x}_{s^\star})\|^2 \geq \epsilon^2\right) \leq \Prob\left(\|\nabla f_\delta(\bar{\x}_{s^\star})\|^2 \geq 160\sqrt{\tfrac{cd^{3/2}G^3}{T}(G+\tfrac{\Delta}{\delta})} + \tfrac{\lambda(96\sqrt{2\pi}dG^2)}{B}\right) \leq \Lambda. 
\end{equation*}
By Theorem~\ref{Thm:Goldstein-smoothing}, we have $\nabla f_\delta(\bar{\x}_{s^\star}) \in \partial_\delta f(\bar{\x}_{s^\star})$. This together with the above inequality implies that there exists some $T, S, B > 0$ such that the output of Algorithm~\ref{alg:2-SGFM} satisfies that $\EE[\min\{\|\g\|: \g \in \partial_\delta f(\bar{\x}_{s^\star})\}] \leq \epsilon$ and the total number of calling the function value oracles is bounded by
\begin{equation*}
O\left(d^{\frac{3}{2}}\left(\frac{G^4}{\epsilon^4} + \frac{\Delta G^3}{\delta\epsilon^4}\right)\log_2\left(\frac{1}{\Lambda}\right) + \frac{dG^2}{\Lambda\epsilon^2}\log_2\left(\frac{1}{\Lambda}\right)\right). 
\end{equation*}
This completes the proof. 
\begin{figure*}[!t]
\centering
\begin{subfigure}[t]{.24\linewidth}
\centering
\includegraphics[width=.99\linewidth]{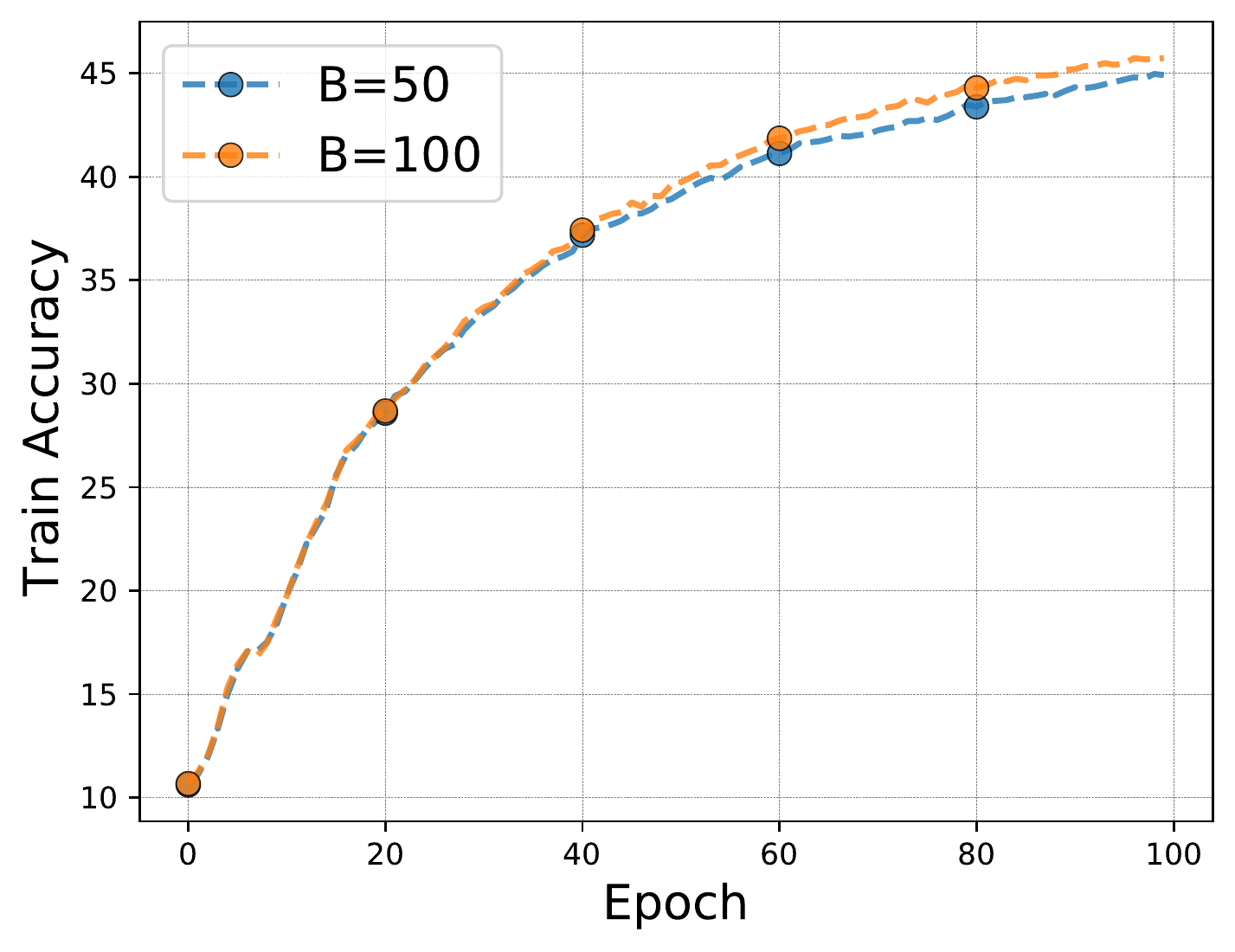}
\caption{\scriptsize{Effect of $B$ (Train)}}
\label{fig:subfig-ablation-cifar-1}
\end{subfigure}
\begin{subfigure}[t]{.24\linewidth}
\centering
\includegraphics[width=.99\linewidth]{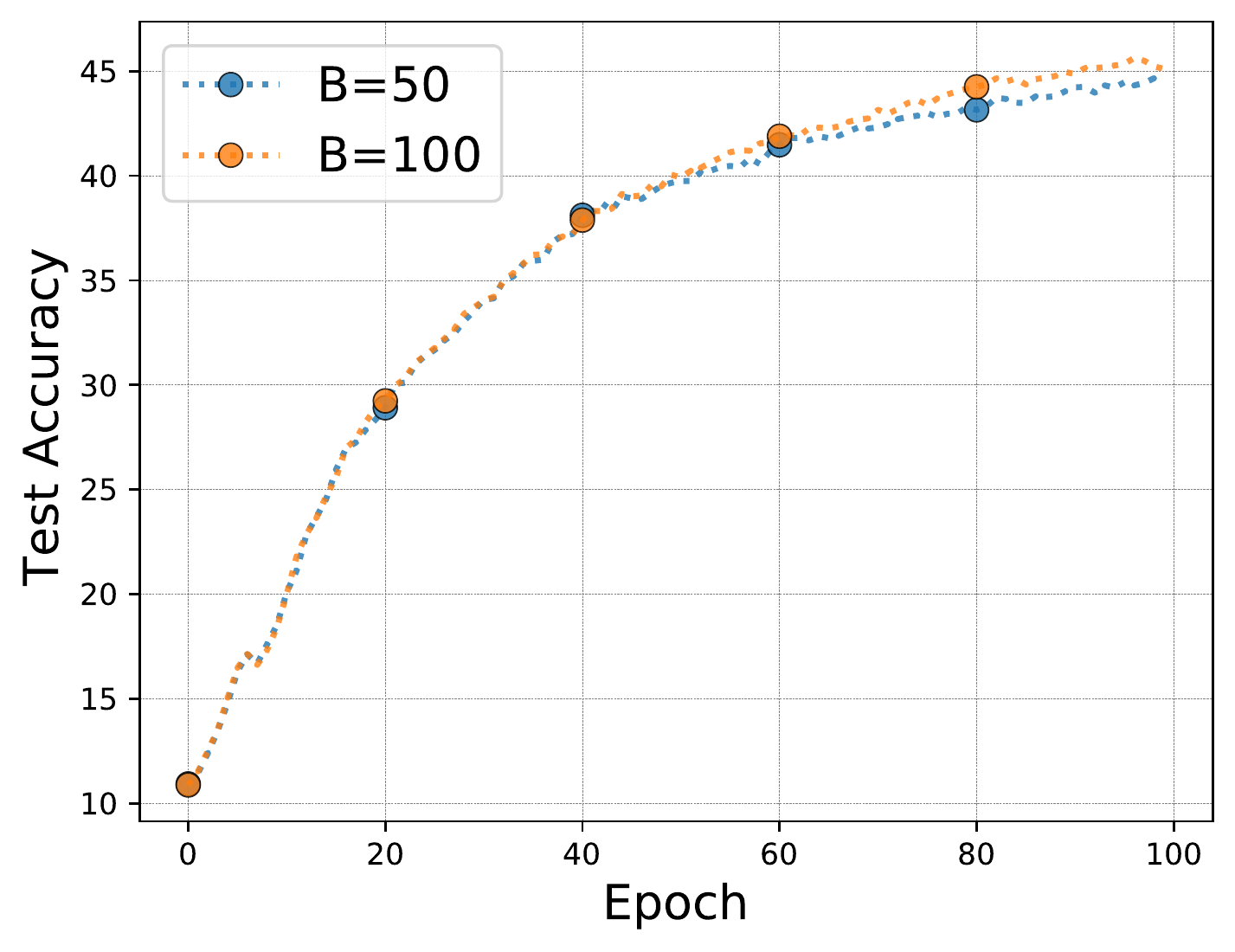}
\caption{\scriptsize{Effect of $B$ (Test)}}
\label{fig:subfig-ablation-cifar-2}
\end{subfigure}
\begin{subfigure}[t]{.24\linewidth}
\centering
\includegraphics[width=.99\linewidth]{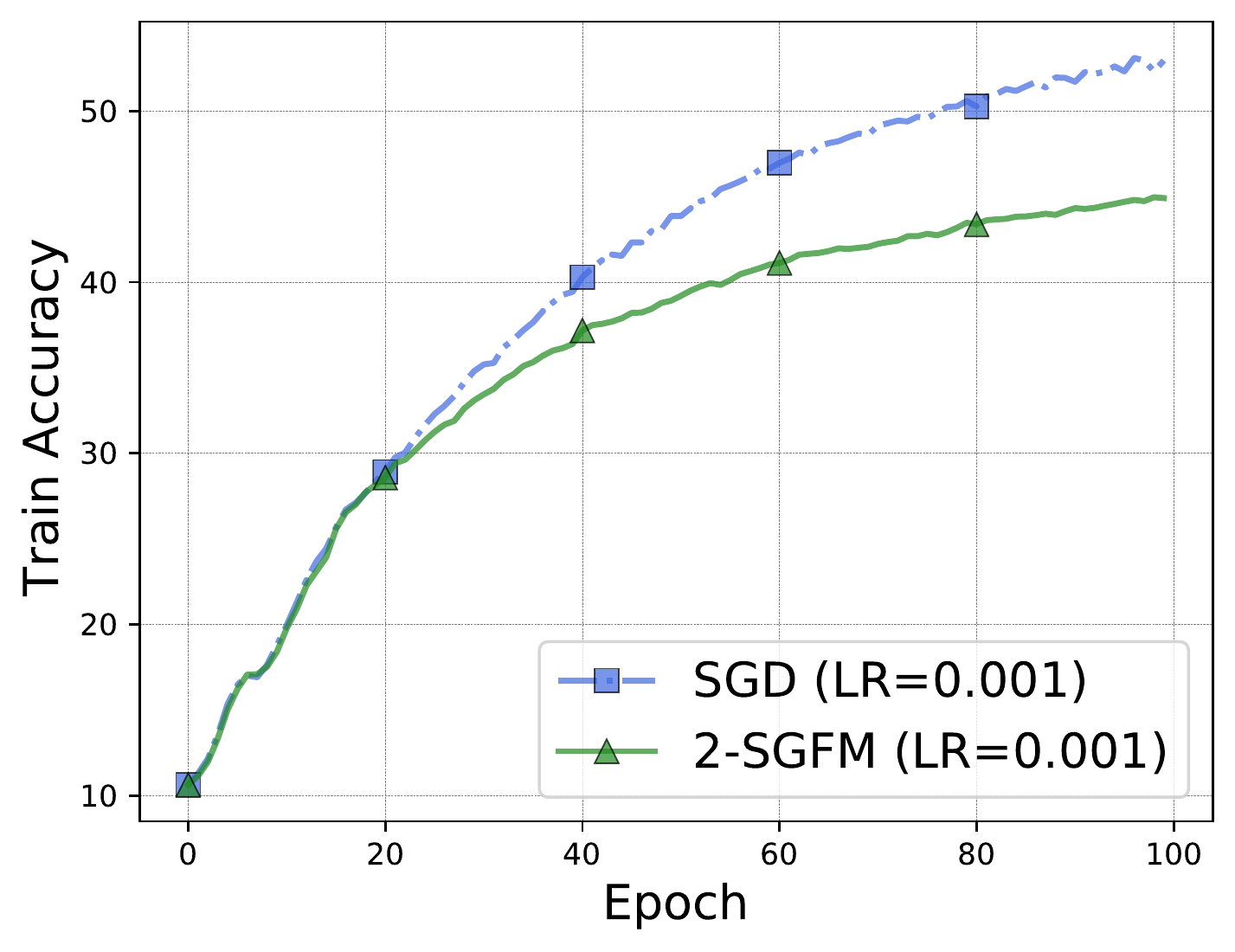}
\caption{\scriptsize{Compare with SGD (Train)}}
\label{fig:subfig-ablation-cifar-3}
\end{subfigure}
\begin{subfigure}[t]{.24\linewidth}
\centering
\includegraphics[width=.99\linewidth]{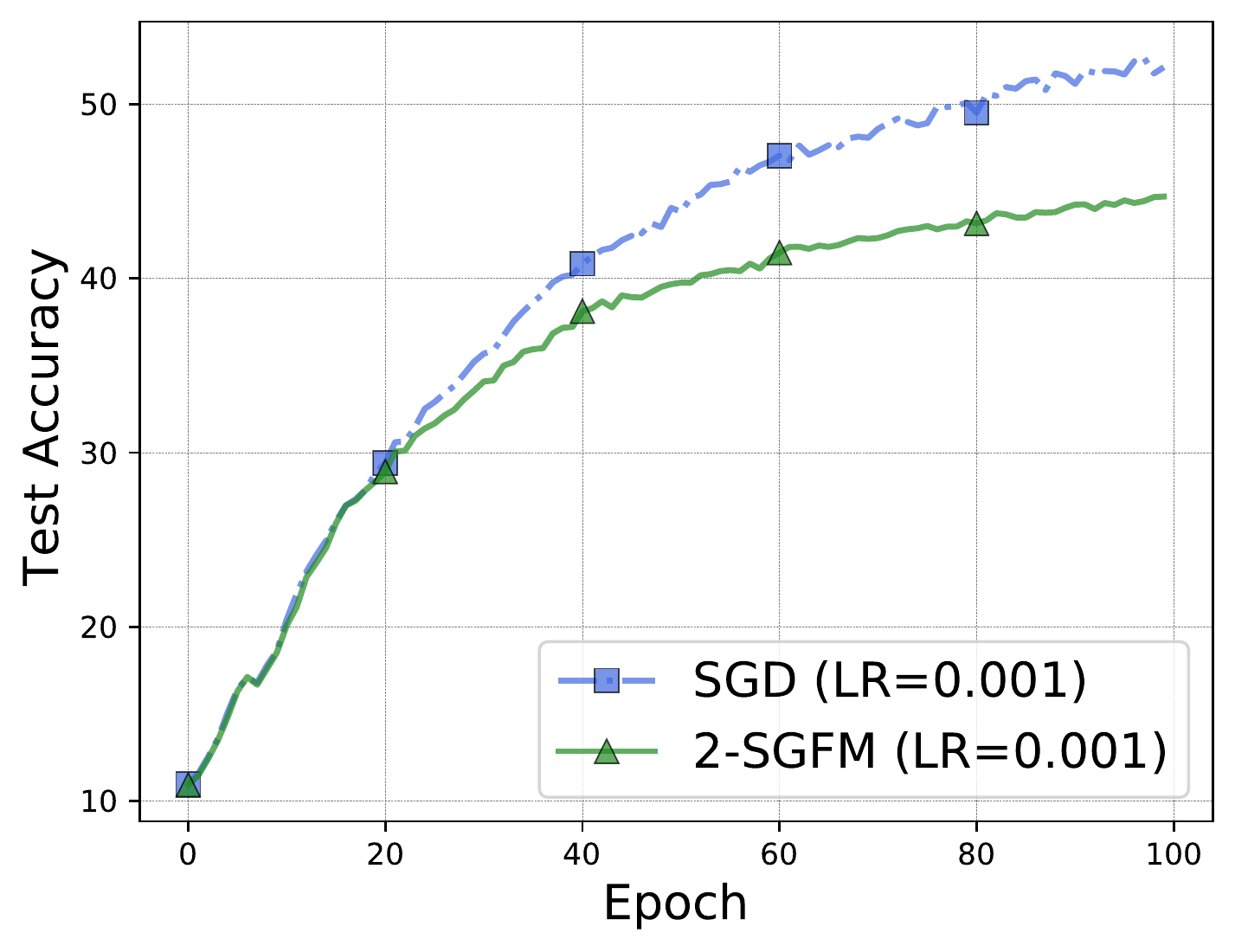}
\caption{\scriptsize{Compare with SGD (Test)}}
\label{fig:subfig-ablation-cifar-4}
\end{subfigure}
\vspace*{-.5em}\caption{\footnotesize{Addional experimental results on the CIFAR10 dataset~\citep{Krizhevsky-2009-Learning}. (\textbf{a}-\textbf{b}) Performance of 2-SGFM with different choices of $B$. (\textbf{c}-\textbf{d}) Performance of 2-SGFM and SGD. }}\label{fig:ablation-cifar}\vspace*{-1em}
\end{figure*}
\section{Additional Experimental Results on \textsc{Cifra10}}\label{sec:cifar10}
We evaluate the performance of our two-phase version of SGFM (Algorithm~\ref{alg:2-SGFM}) on the CIFAR10~\cite{Krizhevsky-2009-Learning} dataset using convolutional neural networks (CNNs) with ReLU activations. We provide the detailed information about the network architecture as follows,
\begin{tcolorbox}
\definecolor{codeblue}{rgb}{0.25,0.5,0.5}
\definecolor{codekw}{rgb}{0.85, 0.18, 0.50}
\lstset{
  backgroundcolor=\color{white},
  basicstyle=\fontsize{7.5pt}{7.5pt}\ttfamily\selectfont,
  columns=fullflexible,
  breaklines=true,
  captionpos=b,
  commentstyle=\fontsize{7.5pt}{7.5pt}\color{codeblue},
  keywordstyle=\fontsize{7.5pt}{7.5pt}\color{codekw},
}
\begin{lstlisting}[language=python]
class CNN_CIFAR(nn.Module):
    def __init__(self):
        super(CNN_CIFAR, self).__init__()
        self.conv1 = nn.Conv2d(3, 6, 5)
        self.conv2 = nn.Conv2d(6, 16, 5)
        self.fc1   = nn.Linear(16*5*5, 120)
        self.fc2   = nn.Linear(120, 84)
        self.fc3   = nn.Linear(84, 10)

    def forward(self, x):
        out = F.relu(self.conv1(x))
        out = F.max_pool2d(out, 2)
        out = F.relu(self.conv2(out))
        out = F.max_pool2d(out, 2)
        out = out.view(out.size(0), -1)
        out = F.relu(self.fc1(out))
        out = F.relu(self.fc2(out))
        out = self.fc3(out)
        out = F.log_softmax(out, dim=1)
        return out
\end{lstlisting}
\end{tcolorbox}
Moreover, we summarize the experimental results in Figure~\ref{fig:ablation-cifar}.  In Figure~\ref{fig:subfig-ablation-cifar-1} and~\ref{fig:subfig-ablation-cifar-2}, we  study the effect of batch size $B \geq 1$ in 2-SGFM on the CIFAR10 dataset. In Figure~\ref{fig:subfig-ablation-cifar-3} and~\ref{fig:subfig-ablation-cifar-4}, We compare the performance of SGD and 2-SGFM. Overall, these results show promising performance of our proposed gradient-free  method on solving real-world complex image classification problems.

\end{document}